\documentclass[12pt]{article}
\usepackage{times}
\usepackage[a4paper,text={128mm,185mm}, centering]{geometry}
\usepackage[leqno]{amsmath}%
\usepackage{amsfonts}%
\usepackage{amssymb}%
\usepackage{amsthm}
\usepackage[utf8]{inputenc}
\usepackage[T1]{fontenc}
\usepackage[english]{babel}
\DeclareUnicodeCharacter{0308}{\"}
\usepackage[backend=biber, style=alphabetic, sorting=nyt, url=false, doi=false, isbn=false, eprint=false]{biblatex}
\usepackage[all,cmtip]{xy}%
\usepackage{fancyhdr}
\usepackage{titlesec}
\titleformat{\section}[hang]%
{\bfseries\large}{\thesection.}{1ex}{}%
\titleformat{\subsection}[hang]%
{\bfseries}{\thesubsection}{1ex}{}%
\titleformat{\subsubsection}[runin]%
{\bfseries}{\thesubsubsection}{1ex}{}%
\titleformat{\acknowledgment}[runin]%
{\it}{}{1ex}{}%
\usepackage{csquotes}
\usepackage{graphicx}%
\usepackage{sidenotes}%
\usepackage{xcolor}
\usepackage{newtxtext,newtxmath}%
\usepackage{hyperref}%
\hypersetup{
    colorlinks=true,
    linkcolor=black,
    filecolor=magenta,      
    urlcolor=cyan,
    pdftitle={Normal functor},
    pdfpagemode=FullScreen,
    citecolor=black,
    }
\usepackage{contour} 
\usepackage[normalem]{ulem}

\contourlength{0.8pt}

\usepackage{soul} 
\setul{1pt}{.4pt}
\newcommand{\R}{\mathbb{R}}
\newcommand{\id}{\mathrm{id}}
\newcommand{\flip}{\mathrm{flip}}
\newcommand{\rflip}{\mathrm{r\text{-}flip}}
\newcommand{\sflip}{\mathrm{s\text{-}flip}}
\newcommand{\triv}{\mathrm{triv}}
\newcommand{\coker}{\mathrm{coker}}

\newcommand{\quot}{\mathrm{quot}}
\newcommand{\htov}{{}^v\!\!\!\diagup_{\!\!\! h}}
\newcommand{\varhtov}{{}^v\!\!\!\diagup_{\!\! h}}
\newcommand{\vtoh}{{}^h\!\!\!\diagup_{\!\!\! v}}
\newcommand{\varvtoh}{{}^h\!\!\!\diagup_{\!\! v}}
\newcommand{\range}{\mathrm{Im}}
\newcommand{\pr}{{\mathrm{pr}}}
\newcommand{\Cinf}{C^\infty}
\newcommand{\Hcal}{\mathcal{H}}
\newcommand{\Ecal}{\mathcal{E}}
\newcommand{\Fcal}{\mathcal{F}}
\newcommand{\mathsc}[1]{\normalfont\textsc{#1}} 
\newcommand{\VB}{\mathrm{VB}}
\newcommand{\DVB}{\mathrm{DVB}}
\newcommand{\Smooth}{\mathsc{Smooth}} 
\newcommand{\Embed}{\mathsc{Embed}} 
\newcommand{\Immer}{\mathsc{Immer}} 
\newcommand{\Submer}{\mathsc{Submer}} 
\newcommand{\LieGrpd}{\mathsc{LieGrpd}} 
\newcommand{\Grpd}{\mathsc{Grpd}}
\newcommand{\class}{\mathscr{C}} 
\newcommand{\Cat}{\textsc{Cat}}
\newcommand{\Dscr}{\mathscr{D}}
\newcommand{\dtimes}[2]{{\lceil {#1},{#2} \rceil}}

\newcommand{\vlift}{\mathcal{V}}
\newcommand{\ddtvar}{
        \left.
            \frac{d}{d\lambda}
        \right
        \vert_{\lambda=0}
    }
    
\newcommand{\flipar}{\xymatrix@C=1.2pc{\ar[r] \ar@{}[r]|-{\rule{0.8pt}{5pt}}& }}
\newcommand{\flipisom}{\xymatrix@C=1.4pc{\ar@{<->}[r] \ar@{}[r]|-{\rule{0.8pt}{5pt}}& }}
\newcommand{\flipisomvar}{\xymatrix@C=1.4pc{\ar[r] \ar@{}[r]|-{\rule{0.8pt}{5pt}}& }}
\newcommand{\longflipar}{\xymatrix@C=1.6pc{\ar[r] \ar@{}[r]|-{\rule{0.8pt}{5pt}}&}}
\newcommand{\flipard}{\ar[d] \ar@{}[d]|-{\rule{5pt}{0.8pt}}}
\newcommand{\fliparu}{\ar[u] \ar@{}[u]|-{\rule{5pt}{0.8pt}}}
\newcommand{\fliparr}{\ar[r] \ar@{}[r]|-{\rule{0.8pt}{5pt}}}
\newcommand{\fliparl}{\ar[l] \ar@{}[l]|-{\rule{0.8pt}{5pt}}}
\newcommand{\flipardd}{\ar[dd] \ar@{}[dd]|-{\rule{0.8pt}{5pt}}}
\newcommand{\fliparrr}{\ar[rr] \ar@{}[rr]|-{\rule{0.8pt}{5pt}}}
\newcommand{\fliparuu}{\ar[uu] \ar@{}[uu]|-{\rule{5pt}{0.8pt}}}
\def\rRightarrow{
\ar@{}[rd]|-{\rotatebox[origin=c]{0}{$\Rightarrow$}}}
\def\lRightarrow{
\ar@{}[ld]|-{\rotatebox[origin=c]{0}{$\Leftarrow$}}}
\def\uRightarrow{
\ar@{}[rd]|-{\rotatebox[origin=c]{90}{$\Rightarrow$}}}
\def\dRightarrow{
\ar@{}[rd]|-{\rotatebox[origin=c]{-90}{$\Rightarrow$}}}
\usepackage{trimclip}
\makeatletter
\DeclareRobustCommand{\leftloop}{%
  \mathrel{\mathpalette\left@loop\relax}%
}
\newcommand{\left@loop}[2]{%
  \vphantom{\looparrowright}
  \smash{\clipbox{0 {-.1\height} {.35\width} {-.1\height}}{$\m@th#1{\looparrowright}$}}%
}
\makeatother
\def\loopd{\ar@{}[d]|{\rotatebox[origin=c]{90}{$\looparrowleft$}}}
\def\loopr{\ar@{}[r]|{\rotatebox[origin=c]{0}{$\looparrowleft$}}}
\def\xloopr[#1]{\ar@{}[r]|-{\rotatebox[origin=c]{0}{$\leftloop\joinrel\xrightarrow{\hspace{#1}}$}}}
\def\xloopd[#1]{\ar@{}[d]|-{\rotatebox[origin=c]{-90}{$\leftloop\joinrel\xrightarrow{\hspace{#1}}$}}}
\def\dar[#1]{\ar@<2pt>[#1]\ar@<-2pt>[#1]}
\def\hookd{\ar@{}[d]|{\rotatebox[origin=c]{90}{$\longleftarrow\joinrel\rhook$}}}
\def\xhookd[#1]{\ar@{}[d]|{\rotatebox[origin=c]{90}{$\xleftarrow{\hspace{#1}}\joinrel\rhook$}}}
\def\hookr{\ar@{}[r]|-{\rotatebox[origin=c]{0}{$\lhook\joinrel\longrightarrow$}}}
\def\xhookr[#1]{\ar@{}[r]|-{\rotatebox[origin=c]{0}{$\lhook\joinrel\xrightarrow{\hspace{#1}}$}}}
\def\xhooku[#1]{\ar@{}[u]|-{\rotatebox[origin=c]{90}{$\lhook\joinrel\xrightarrow{\hspace{#1}}$}}}
\theoremstyle{plain}
\newtheorem{theorem}{Theorem}[section]
\newtheorem{prop}[theorem]{Proposition}
\newtheorem{cor}[theorem]{Corollary}

\newtheorem{lem}[theorem]{Lemma}
\theoremstyle{definition}

\newtheorem{deff}[theorem]{Definition}
\theoremstyle{remark}

\newtheorem{remark}[theorem]{Remark}

\newtheorem{example}[theorem]{Example}
\makeatletter
\newcommand{\leqnomode}{\tagsleft@true}
\newcommand{\reqnomode}{\tagsleft@false}
\makeatother
\reqnomode
\title{\vskip 5pt  \bf ON THE NORMAL FUNCTOR IN THE CATEGORY OF SMOOTH VECTOR BUNDLES}
\author{\itshape\bfseries{Quentin Karegar Baneh Kohal}}
\date{}
\begin{document}
\maketitle

\thispagestyle{empty}
\vskip 25pt
\begin{adjustwidth}{0.5cm}{0.5cm}
{\small
{\bf R\'esum\'e.} Cet article est d\'edi\'e \`a l'\'etude du foncteur normal dans la cat\'egorie des fibr\'es vectoriels r\'eels lisses. 
Plus particuli\`erement, nous \'etudions un ph\'enom\`ene de sym\'etrie lors de l'it\'eration double du foncteur normal sur un carr\'e commutatif d'immersions lisses. 
Pour ce faire, une th\'eorie de tir\'e en arri\`ere et de quotient est d\'evelopp\'e pour les fibr\'es vectoriels doubles mais \'egalement pour certaines classes de morphismes. 
Ces deux op\'erations r\'esultent \^etre les ingr\'edients n\'ecessaires pour \'etudier la naturalit\'e du foncteur normal. 
Ainsi, la sym\'etrie attendue est obtenue gr\^ace 
au caract\`ere universel et \`a la compatibilit\'e mutuelle de ces op\'erations.
\\
{\bf Abstract.} This article is dedicated to the study of the normal functor in the category of smooth real vector bundles. 
Particularly, we focus on a symmetry phenomena which occurs after iterating two times the normal functor on a commutative square of smooth immersions.
To do so, a theory of pullback and quotient is developed for double vector bundles but also for some classes of morphisms. These two operations turn out to be the key ingredients in order to study the naturality of the normal functor. 
The expected symmetry is then obtained thanks to the universal behavior and the mutual compatibility of these operations.
\\
{\bf Keywords.} Normal Functor, Double Vector Bundles, Double Categories, Lie Groupoids.\\
{\bf Mathematics Subject Classification (2020).} 18N10,18E05, 18F15, 58H05.
}
\end{adjustwidth}
%
%
%
%
%
\section*{Introduction}
\paragraph{}
Nowadays, it is well-known that the first order approximation of the geometry  along a submanifold of a smooth manifold is modeled  by its normal bundle. 
In fact, the normal bundle makes sense for any immersion of smooth manifolds and enjoys some functorial properties with respect to ``morphisms between immersions'', which are given by some special kind of commutative squares generalizing in a naive way the inclusion of pairs of manifolds.
At this point, a preliminary issue arises: given a morphism of immersions, when does the induced map between the respective normal bundles is itself an ``immersion''?
Recall that this latter map, called the \textit{normal differential}, is first of all a vector bundle morphism, hence the word ``immersion'' here stands for a suitable adaptation of the notion of immersion of smooth manifolds to the category of vector bundles. 
In order to obtain a positive answer to the previous question,  we are forced to consider not only morphism of immersions, but ``immersions of immersions'' (commutative squares consisting of immersions). 
The only small dampler is that even under this latter restriction, there is no guarantee to obtain an immersion of vector bundle after applying the normal functor.
However that issue get easily solved, but not without revealing a new ambiguity: the symmetry of the situation, namely immersions of immersions have two distinct read directions.
Assuming that both induce an immersion of vector bundles, our main concern is to determine if the different ways of iterating (two times) the normal bundle return the same object, named by double normal bundle'. More generally, we aim for such result in the case of square of Lie groupoid immersions.
\par \hspace{1pc}
The content of the present work consists in bringing such questionning in the shed of (strict) double categories as introduced by  Charles Ehresmann in~\cite{Ehresmann1963double}. 
 A simple example of such structure is, suggestively, given by a (single) double vector bundles~\cite{pradines1974dvb-cras}, which arises naturally when applying the normal functor onto an immersion of vector bundle.
In a slightly different direction, the class of commutative squares of smooth maps  define as well a double category, denoted $\Smooth^\square$. 
In fact, at some point we should consider a mixture of the examples just mentionned, culminating in the double category $\DVB^\square$ whose objects consists in commutative squares of double vector bundle.
%
More specifically, the construction of the normal bundle rests on two operations: the pullback (more generally, the fiber products) and the quotient.
These latter operations will not be limited to objects only, but be also considered for structured objects (vector bundles, double vector bundles) as well as for morphisms.
\par \hspace{1pc}
%
%
After defining the relevant double categories, we recall some preliminary properties of the normal functor in the category of smooth manifolds (section~\ref{sec.preliminaries}).
In particular, we introduce the fundamental notion of pullback of vector bundle morphism by (directed) double morphism. The vertical functoriality is delayed until appendix~\ref{app.vert-fun}, due to its conceptual nature.
%
%
Next, we focus on double vector bundles (section~\ref{sec.dvb}) by providing a rudimentary exposition of some generalized pullback, called \textit{side-pullback}, which get declined into two types: vertical and horizontal 
Then, we introduce some refinement of the category $\DVB$ of double vector bundles in order to realize quotients as genuine cokernels in some specific categories, again either horizontal or vertical.  
The normal bundle of an immersion of vector bundle is finally obtained by mimicking the usual definition for immersion of smooth manifolds: pulling back and quotienting.
%
%
The section~\ref{sec.symm-thm} is dedicated to the proof of the ``symmetry theorem'' (theorem~\ref{thm.main}) which asserts that, given a suitable square of smooth immersions, iterating the normal bundle does not depend on the direction chosen (up to some natural flip). 
The particular case of the symmetry theorem for embedding of smooth manifolds has been studied in~\cite{pikethesis}, and in~\cite{meinrenken2022dvbquot} with a different perspective in mind (quotient of multigraded bundles) from ours, and using techniques which do not seem affordable in the context of Lie groupoids.
Our strategy is to set up a convenient framework in order to exhibit the desired isomorphism as a double quotient in the category of flip isomorphism. 
%
%
%
%
The basic underlying ideas of double category theory are compiled in the appendix~\ref{app.dbcat}, particularly a precise notion of direction in a double category (horizontal/vertical) which are interrelated through a ``flip'' of a double category (essentially the notion of transpose in \cite{grandis-pare1999}).
\par \hspace{1pc}
All along the paper, we juggle between the classical definition of double vector bundle, as presented in~\cite{mackenzie2005}, and the one using homogeneity structures \cite{grabowski-rotkiewicz2009}.
This latter viewpoint vastly simplifies the transition from smooth manifolds to Lie groupoids, thanks to~\cite{bursztyn-cabrera-delhoyo-2016}.
The upside of our method is to generalize directly to the Lie groupoid framework, resulting in a version of the symmetry theorem for double vb-groupoids (section~\ref{sec.normalgroupoid}). 
Notice that such structure seems to have made only sparse apparitions in the literature. For instance, as a special case of multiple graded groupoids from~\cite{bruce-grabowska-grabowski2015graded}, for which a suitable version of the symmetry theorem would arguably hold.
Our main motivation comes from their natural occurrence in index theory through the study of double deformations (to the -double- normal groupoid) of Lie groupoids, which will be developed in a subsequent work in collaboration with Paulo Carrillo Rouse. 
%
%
%
\paragraph{Conventions and notations.} 
All manifolds, maps and vector bundles are supposed to be over $\R$ and of class $\Cinf$ (smooth), unless specified.
Vector sub-bundles are not supposed to be wide, that is the base manifolds do not necesarily coincide, moreover we tacitly assume that vector bundles are of constant rank (even when the base is not connected).
Vector bundles morphisms are shortened into ``vb-maps'' and are often summoned as pair of maps $(\varphi, f)$, by precising the base map $f$.
The word "embedding" will stand for injective immersion, whereas "proper embedding" will refer to a proper injective immersion.
We will use the arrows $\hookrightarrow$ and $\looparrowright$ to denote embeddings and immersions, respectively.
If $f_1: M \rightarrow N$ and $f_2: M \rightarrow P$ are two smooth maps, then $\dtimes{f_1}{f_2} := (f_1 \times f_2)\circ \Delta_{M}$ define a map $M \rightarrow N \times P$ where $\Delta_M: M \hookrightarrow M \times M$ is the diagonal embedding.
%
The differential of a smooth map $f: M \rightarrow N$ of smooth manifolds is denoted by $f_*: TM \rightarrow TN$, and its restriction to a fiber is $f_{*,m}: T_m M \rightarrow T_{f(m)}N$. 
In the particular case of a smooth action $\alpha: \R \times M \rightarrow M$ of the monoid $(\R, \cdot)$, we use the notation $\alpha_*$ to refer to the induced action on the tangent bundle $\alpha_*: \R \times TM \rightarrow TM$, called the tangent action.
Throughout the text, we use a slight modification of the usual differential map $f_\sharp= \dtimes{\pi}{f_*}: TM \rightarrow f^*TN$, called the sharp differential, in particular $f_\sharp $ is a morphism of vector bundles over $\id_M$. More generally, given a vector bundle map $\varphi: E \rightarrow F$ over $f: M \rightarrow N$, the sharpening of $\varphi$ will be the vb-map $\varphi_\dag$ defined as $\dtimes{\pi}{\varphi}: E \rightarrow f^*F \subseteq M \times F$. An identification (or a 2-isomorphism) between two morphisms $f$ and $g$ in a given category $C$ is a pair of isomorphism $(\psi,\varphi)$ in $C$ such that $f \psi = \varphi g$. When the isomorphisms $\psi$ and $\varphi$ are natural, $f$ and $g$ are said to be ``naturally identified'' or ``essentially equal''. 
\paragraph{Acknowledgement.}
The author want to thank Paulo Carrillo Rouse and Jos\'e Luis Cisneros Molina for valuable discussion. 
This work was supported by 
DGAPA-Universidad Nacional Aut\'onoma de M\'exico through Proyecto PAPIIT IN105121.
%
%
%
\section{Normal functor: construction}\label{sec.preliminaries}
\subsection{Some double categories.}
\subsubsection{}
Set $\Smooth$ to be the category of smooth manifolds and smooth maps.
Let $\class_1, \class_2$ be two wide subcategories of $\Smooth$, each of which has morphisms consisting of a -eventually distinct- class of smooth maps stable under base-change (e.g. embeddings, immersions, submersions, surmersions, vector bundle projections, proper maps, etc...). 
Consider the double category\footnote{We only consider strict double categories as in the original source~\cite{Ehresmann1963double}, cf.~appendix~\ref{app.dbcat} for further details.}  
$(\class_1, \class_2)^\square$ described by:
\begin{itemize}
    \setlength\itemsep{0pc}
    \item Objects: smooth manifolds;
    \item Vertical morphisms: morphisms in $\class_1$;
    \item Horizontal morphisms: morphisms in $\class_2$;
    \item Double morphisms: commutative squares (whose horizontal/vertical edges are horizontal/vertical morphisms).
\end{itemize}
Given a wide subcategory $\class$ as above, we introduce the double categories 
$$
    \class^v = (\class, \Smooth)^\square, \quad \class^h = (\Smooth, \class)^\square, \quad \class^\square = (\class,\class)^\square.
$$ 
For instance, if $\class= \Immer$ denotes the category with smooth manifolds as objects and immersions as morphisms, then the previously defined double categories fit into a comparison table as follows:
\begin{center}
\begin{tabular}{|c|c|c|c|c|}
    \hline
    Double category 
    & $\Immer^v$ 
    & $\Immer^h$ 
    & $\Immer^\square$
    \\ \hline
    Objects 
    & manifolds
    & manifolds
    & manifolds
    \\  \hline
    Vertical morphisms 
    & immersions
    & smooth maps
    & immersions
    \\  \hline
    Horizontal morphisms 
    & smooth maps 
    & immersions
    & immersions
    \\  \hline
    Double morphisms 
    & com. squares
    & com. squares
    & com. squares
     \\  \hline
\end{tabular}
\end{center} 
\subsubsection{The double category $\Immer$.} 
Let $j_M : M_s \looparrowright M_r$ and $j_N: N_s \looparrowright N_r$ be two immersions. A \textbf{2-map of immersions}, denoted $F: j_M \Rightarrow j_N$, consists of a pair of smooth maps $f_s: M_s \rightarrow N_s$ and $f_r: M_r \rightarrow N_r$ making the following diagram commute:
\begin{align}\label{morphism-embed} \begin{aligned}
 \xymatrix@C=1.5pc@R=1.2pc{
M_s \ar[r]^-{f_s} 
\xloopd[0.7pc]_-{j_M}
\rRightarrow
& N_s
\xloopd[0.7pc]^-{j_N}
\\
M_r 
\ar[r]_-{f_r}  
& N_r
}   
\end{aligned}
\end{align}
 Notice that, as formulated, the latter square is \textit{directed} in the sense that it is interpreted as a 2-maps in the \textit{horizontalization} of the double category $\Immer^v$  (cf.~\S\ref{par.choosing-direction}). 
 In a bold move, such horizontalization will be still denoted as $\Immer$, in such a way that a 2-map of immersions is the same as a 2-map in $\Immer$.
 The horizontal composition $\circ$ and vertical compositions $\bullet$ are respectively given by horizontal and vertical concatenations of squares. 
 Let us specify the conventions used for the order of composition: 
 \begin{itemize}
  \item if $F= (f_2,f_1)$ and $G= (g_2,g_1)$ are horizontally composable, then their composition is $G\circ F = (g_2 \circ f_2, g_1 \circ f_1)$:
 $$
 \left(
 \vcenter{
  \xymatrix@C=1.5pc@R=1.2pc{
    P_1 
    \xloopd[0.7pc]_-{k}
    &\ar[l]_-{g_1}  N_1
    \lRightarrow
    \xloopd[0.7pc]^-{j}
    \\
    P_2 
    & \ar[l]^-{g_2}N_2
 }}\right)
 \:\circ\: 
 \left(\vcenter{
  \xymatrix@C=1.5pc@R=1.2pc{
    N_1  
    \xloopd[0.7pc]_-{j}
    & \ar[l]_-{f_1} M_1
    \xloopd[0.7pc]^-{i}
    \lRightarrow
    \\
    N_2 
    & \ar[l]^-{f_2}M_2
 }}\right) 
 \quad = \quad 
 \left(\vcenter{
  \xymatrix@C=1.8pc@R=1.2pc{
    P_1 \ar[r]^-{g_1 \circ f_1} 
    \xloopd[0.7pc]_-{k}
    & N_s
    \xloopd[0.7pc]^-{j}
    \lRightarrow
    \\
    P_2 
    \ar[r]_-{g_2 \circ f_2}  
    & N_r
 }}\right)
 $$
 \item if $F = (f_2,f_1)$ and $H =(h_2,h_1)$ are vertically composable, that is $f_2 = h_1$, then their composition $H \bullet F =(h_2,f_1)$ yields a 2-map $i_2 \circ i_1 \Rightarrow j_2 \circ j_1$ in $\Immer$: 
 $$
 \left(\vcenter{
  \xymatrix@C=1.5pc@R=1.2pc{
    N_2  
    \xloopd[0.7pc]_-{j_2}
    & \ar[l]_-{h_1} M_2
    \xloopd[0.7pc]^-{i_2}
    \lRightarrow
    \\
    N_3 
    & \ar[l]^-{h_2}M_3
 }}\right)  
 \: \bullet \: 
 \left(\vcenter{
  \xymatrix@C=1.5pc@R=1.2pc{
    N_1  
    \xloopd[0.7pc]_-{j_1}
    & \ar[l]_-{f_1} M_1
    \xloopd[0.7pc]^-{i_1}
    \lRightarrow
    \\
    N_2 
    & \ar[l]^-{f_2}M_2
 }}\right)  
 \quad = \quad 
 \left(\vcenter{
  \xymatrix@C=1.5pc@R=1.2pc{
    N_1  
    \xloopd[0.7pc]_-{j_2 \circ j_1}
    & \ar[l]_-{f_1} M_1
    \xloopd[0.7pc]^-{i_2 \circ i_1}
    \lRightarrow
    \\
    N_3 
    & \ar[l]^-{h_2}M_3
 }}\right) 
 $$
 \end{itemize}
For the sake of compactness, the square~(\ref{morphism-embed}) will sometimes be written as $$F=(f_r,f_s): (j_M: M_s \looparrowright M_r) \Rightarrow (j_N: N_s \looparrowright N_r).$$
\begin{remark}
  Replacing the immersions by another wide sub-category $\class$ leads to the notion of 2-maps of $\class$-maps. As an example, if $\class= \Submer$ is the category of smooth manifolds with the submersions as morphisms, then a commutative square in which the vertical arrows are submersions and the horizontal arrows are smooth maps defines a 2-map of submersions.
\end{remark}

\begin{remark}\label{rem.flip}
    The double morphism represented by the commutative square~(\ref{morphism-embed}) has another interpretation: the 2-morphism $J: f_s \Rightarrow f_r$ in the \textit{verticalization} of $\Immer^v$ (left-hand side below), which clearly deserves the name of \textbf{2-immersion} (of smooth maps).
    Alternatively but equivalently, $J: f_s \Rightarrow f_r$ may be thought as a 2-morphism in the \textit{horizontalization} of the double category $\Immer^h$ by ``flipping the square'' (right-hand side below). See~\S\ref{par.choosing-direction} and~\S\ref{par.dbcat-flip}.
    \begin{align} 
    \begin{split}
   \xymatrix@C=1.5pc@R=1.2pc{
    M_s \ar[r]^-{f_s} 
    \xloopd[0.7pc]_-{j_M}
    \dRightarrow
    & N_s
    \xloopd[0.7pc]^-{j_N}
    \\
    M_r 
    \ar[r]_-{f_r}  
    & N_r
    }   
    \qquad \quad 
    \xymatrix@C=1.5pc@R=1.2pc{
    M_s 
    \ar[d]_-{f_s} 
    \xloopr[0.8pc]^-{j_M}
    \rRightarrow
    & M_r 
    \ar[d]^-{f_r}
    \\
     N_s
      \xloopr[0.8pc]_-{j_N}
    & N_r
    }   
\end{split}
\end{align}
\end{remark}
\begin{example}\label{ex.map-as-2map}
A smooth map $f: M \rightarrow N$ may be lifted into an horizontal 2-map of $\Smooth^\square$ in at least four different ways: either $\id_M \Rightarrow f$ (the range lift), $f \Rightarrow \id_N$ (the source lift), $\id_M \Rightarrow \id_N$ (the diagonal lift), or $f \rightarrow f$ (the trivial lift) which are respectively given by the following commutative squares
$$
\xymatrix@C=1.5pc@R=1.5pc{
 M \ar[r]^-{\id} \ar[d]_-{\id} 
  \ar@{}[rd]|-{\Rightarrow}
 & M \ar[d]^-{f}
 \\
 M \ar[r]_-{f} &  N
} 
\qquad 
\xymatrix@C=1.5pc@R=1.5pc{
 M \ar[r]^-{f} \ar[d]_-{f} 
 \ar@{}[rd]|-{\Rightarrow}
 & N \ar[d]^-{\id}
 \\
 N \ar[r]_-{\id} &  N
}
\qquad 
\xymatrix@C=1.5pc@R=1.5pc{
 M \ar[r]^-{f} \ar[d]_-{\id} 
 \ar@{}[rd]|-{\Rightarrow}
 & N \ar[d]^-{\id}
 \\
 M \ar[r]_-{f} &  N
}
\qquad 
\xymatrix@C=1.5pc@R=1.5pc{
 M \ar[r]^-{\id} \ar[d]_-{f} 
 \ar@{}[rd]|-{\Rightarrow}
 & M \ar[d]^-{f}
 \\
 N \ar[r]_-{\id} &  N
}
$$
\end{example}

\begin{example}\label{ex.vbmap-as-2map}
    Let $\pi_1: E_1 \rightarrow M_1$ and $\pi_2: E_2\rightarrow M_2$ be two vector bundles and $\varphi:E_1 \rightarrow E_2$ be a vb-map, covering $f: M_1 \rightarrow M_2$. 
    Then the pair $(\varphi,f)$ defines a 2-map of embeddings $0_1 \Rightarrow 0_2$ between the respective zero section, but also a 2-map of vector bundle projections $(f,\varphi): \pi_1 \Rightarrow \pi_2$ (in particular, a 2-map of surmersions).
\end{example}
%
%
\subsubsection{The double category $\Immer\VB$.}
Let $\VB$ be the category of vector bundles and vb-maps. There is an obvious functor $\pi: \VB \rightarrow \Smooth$ which assigns to each vector bundle (resp. vb-map) its base manifold (resp. base map). 
As before, we consider the double category $\VB^\square$ whose double morphisms consist in commutative squares of vb-maps, which are translated into  horizontal or vertical \textbf{2-vb-maps} after choosing a direction. 
In particular, the base-projection functor $\pi$ extends into a double functor $\pi^\square: \VB^\square \rightarrow \Smooth^\square$, that projects a 2-vb-map to its base 2-map within the horizontalization and verticalization.
\begin{deff}\label{def.vbimmersion}
    An \textbf{immersion of vector bundle} (shortly, vb-immersion) is a vb-map which is fiberwise injective and whose base map is an immersion of smooth manifolds. The double category $\Immer\VB^v$ is the double subcategory of $\VB^\square$ with vb-immersions as vertical morphisms. As before, we denote by $\Immer\VB$ its horizontalization.
    The (horizontal) 2-morphisms in $\Immer\VB$ are called \textbf{2-maps of vb-immersions}.
\end{deff}
Clearly, the double functor $\pi^\square$ induces a double functor $\Immer\VB \rightarrow \Immer$. 
The verticalization of a (horizontal) 2-map of vb-immersions will be called \textbf{2-vb-immersion} (of vb-maps), following the terminology of remark~\ref{rem.flip}.
\begin{remark}
Equivalently, a vb-immersion is vb-map which realizes an immersion between the total spaces. The analogous definitions give the notions of vb-submersion, vb-embedding, and so on... 
\end{remark}

\begin{example}
  Applying the tangent functor to a 2-map of immersions $F= (f_2,f_1): j_1 \Rightarrow j_2$ yields the 2-map of vb-immersions $F_*= (f_{2*},f_{1*}): j_{1*} \Rightarrow j_{2*}$. Going backward, the evaluation of the double functor $\pi^\square$ on $F_*$ recovers $F$.
\end{example}
\subsection{Basic constructions}
The present section is devoted to introduce some notations regarding a certain amount of constructions: fiber products of vb-maps, quotient of vb-maps, pullback of vb-maps by 2-maps. We also review the definition of normal bundle attached to an immersion together with some observations about its functorial behavior within the double category $\Immer$.
\subsubsection{Fiber product of vb-maps.}
\begin{deff}
 A cospan $(E_1 \xrightarrow{\varphi_1} E \xleftarrow{\varphi_2} E_2)$ in $\VB$, with base cospan $(M_1 \xrightarrow{f_1} M \xleftarrow{f_2} M_2)$ in $\Smooth$, is called \textbf{good} if 
 \begin{itemize}
  \setlength\itemsep{0pc}
  \item[(i)] $M_1 \times_{f_1,f_2} M_2$ is an embedded smooth submanifold of $M_1 \times M_2$ such that $T(M_1 \times_{f_1,f_2} M_2)$ is isomorphic to  $TM_1 \times_{f_{1*},f_{2*}} TM_2$ in a natural way;
  \item[(ii)] $E_1 \times_{\varphi_1,\varphi_2} E_2 \rightarrow M_1 \times_{f_1,f_2} M_2$ is a smooth vector sub-bundles of the product $E_1 \times E_2 \rightarrow M_1 \times M_2$ in a natural way.
 \end{itemize}
 \end{deff}
 \begin{remark}
 The previous definition implies that both of the following sequences of vector bundles are exact:
 \begin{itemize}
 \setlength\itemsep{0pc}
  \item[]  $
 \xymatrix{
    0 \ar[r] & T(M_1 \times_M M_2) \ar[r] & (TM_1 \times TM_2)\vert_{M_1 \times_M M_2} \ar[r]^-{f_{1*}-f_{2*}} 
    & TM,
 }
 $
 \item[]  $
 \xymatrix{
     0 \ar[r]& E_1 \times_E E_2 \ar[r] & (E_1 \times E_2)\vert_{M_1 \times_M M_2} \ar[r]^-{\varphi_1-\varphi_2} 
    & E.
 }
 $
 \end{itemize}
\end{remark}
Let $(E_1 \xrightarrow{\varphi_1} E \xleftarrow{\varphi_2} E_2)$  be a good cospan in $\VB$ with base cospan $(M_1 \xrightarrow{f_1} M \xleftarrow{f_2} M_2)$.
Then,  there is a induced vector bundle structure\footnote{In fact, it carries a richer structure of ``fiber product object'' in $\VB$.} $\pi_1 \times_\pi \pi_2: E_1 \times_{E} E_2 \rightarrow M_1 \times_M M_2$, called the \textbf{fiber product} of $E_1$ and $E_2$ along $E$ in the category $\VB$.
The fiber product vector bundle, when it exists, fits into a commutative square 
$$
\left.
    \vcenter{
\xymatrix@C=1.5pc@R=1.5pc{
   E_1 \times_E E_2 
   \ar[r]^-{\pr_1} 
   \ar[d]_-{\pr_2}
   & E_1  \ar[d]
   \\
   E_2 
   \ar[r]
   & E 
}}
\right.
\quad \text{ with base square }
\left.
    \vcenter{
\xymatrix@C=1.5pc@R=1.5pc{
   M_1 \times_M M_2 
   \ar[r]^-{\pr_1} 
   \ar[d]_-{\pr_2}
   & M_1  \ar[d]
   \\
   M_2 
   \ar[r]
   & M 
}}
\right.
$$
Let $\Ecal: (E_1 \xrightarrow{\varphi_1} E \xleftarrow{\varphi_2} E_2)$ and 
$\Fcal: (F_1 \xrightarrow{\psi_1} F \xleftarrow{\psi_2} F_2)$ be two cospan in $\VB$ with base cospan $(M_1 \xrightarrow{f_1} M \xleftarrow{f_2} M_2)$ and $(N_1 \xrightarrow{g_1} N \xleftarrow{g_2} N_2)$ respectively.
Assume the existence of a morphism of cospan $\Ecal \rightarrow \Fcal$, that is a tuple $(\theta_1,\theta_2,\theta)$ of vb-maps $\theta_1: E_1 \rightarrow F_1$, $\theta_2: E_2 \rightarrow F_2$, and  $\theta: E \rightarrow F$ compatible with the cospan structure. 
Then, by universality, the product vb-map $\theta_1 \times \theta_2$ induces a vb-map $\theta_1 \times_\theta \theta_2$ between the corresponding fiber product vector bundles\footnote{Namely, a morphism of ``fiber product object'' in $\VB$.}:
$$
\xymatrix@C=1.8pc@R=1.5pc{
   E_1 \times_E E_2 
   \ar[r]^-{\theta_1 \times_\theta \theta_2} 
   \ar[d]
   & F_1 \times_F F_2  \ar[d]
   \\
   M_1 \times_M M_2 
   \ar[r]^-{f_1 \times_f f_2}
   & N_1 \times_N  N_2 
}
$$
called the \textbf{fibre product map} of $\theta_1$ and $\theta_2$ along $\theta$.
The construction is clearly compatible with the base-projection functor $\pi:\VB \rightarrow \Smooth$. In other words, the fiber product operation provides a natural transformation from good cospan in $\VB$ to $\VB$.
%
%
\subsubsection{Quotient of vb-maps.}\label{par.quotvbmap}
Let $j_\ell: F_\ell \hookrightarrow E_\ell$ be a wide vb-embedding for $\ell=1,2$, and let $(\varphi,\psi): j_1 \Rightarrow j_2$ be a 2-map in $\Embed\VB$, in other words $\varphi: E_1 \rightarrow E_2$ and $\psi: F_1 \rightarrow F_2$ are vb-maps such that $\varphi \circ j_1 = j_2 \circ \psi$. 
Then, the \textbf{quotient vb-map} $\varphi/\psi$ is the unique map such that the following diagram is a morphism of short exact sequences:
$$
    \xymatrix@R=1.5pc@C=2pc{
       0 \ar[r]
       & F_1 \ar[r]^-{j_1} \ar[d]_-{\psi}
       & E_1 \ar[r]^-{q_1} \ar[d]^-{\varphi}
       & E_1 / F_1 \ar[r] \ar[d]^-{\varphi / \psi}
       & 0
       \\
       0 \ar[r]
       & F_2 \ar[r]_-{j_2} 
       & E_2 \ar[r]_-{q_2} 
       & E_2 / F_2 \ar[r]
       & 0
    }
$$
where $q_1,q_2$ are the quotient projections.
Explicitly, $(\varphi/\psi)([e]_{F_1}) = [\varphi(e)]_{F_2}$ where $[e]_{F_1}$ is the equivalence class\footnote{Notice that we slightly abuse of the notation by denoting $E_\ell / F_\ell$ instead of $E_\ell / j_\ell(F_\ell)$, in particular $\varphi/\psi$ depends also on the vb-emdeddings $j_1$ and $j_2$.} of an element $e \in E_1$ inside $E_1/F_1$, and similarily $[-]_{F_2}$ denotes a class in $E_2/F_2$. 
\begin{remark}
    If the vb-embeddings $j_1, j_2$ are not wide, then one can still replace them by their sharpening $j_{1 \dag},\, j_{2\dag}$ in order to get into the above situation. Moreover, the quotient vb-map is well defined in the broader situation of constant-rank vb-maps (instead of vb-embedddings).
\end{remark}
\begin{remark}\label{rem.hor-cokernel} 
 Let $\Hcal_f$ be the subcategory of morphisms in the horizontalization of the double category $\VB^\square$ (cf.~\S \ref{par.choosing-direction}) consisting of squares with horizontal arrows being wide vb-maps as morphisms (the base map is the identity map), and vb-maps over a fixed smooth map $f: M\rightarrow N$ as objects. 
 Notice that the horizontal concatenation makes sense in $\Hcal_f$ but the vertical one does not (unless $f$ has same source and range). 
 From this perspective, the map $f$ is identified to the zero object in $\Hcal_f$, and  $(q_2,q_1) : \varphi \Rightarrow \varphi/\psi$ is a cokernel for $J=(j_2,j_1)$.
\end{remark}
%
%
%
\subsubsection{Pullback vb-maps by 2-maps.}\label{par.pb-vbmap}
Let $\pi_1: E_1 \rightarrow M_1$ and $\pi_2: E_2 \rightarrow M_2$ be two vector bundles, and $\varphi: E_1 \rightarrow E_2$ be a vb-map with base map $f: M_1 \rightarrow M_2$.
Given a smooth map $g: P_1 \rightarrow P_2$ together with a a 2-map $H = (h_2, h_1): g \Rightarrow f$, the \textbf{pullback} of the vb-map $(\varphi,f)$ along $H$ is defined as the vb-map 
$$
\xymatrix@C=3pc@R=1.5pc{
     h_1^* E_1 
     \ar[r]^-{H^*\varphi}\ar[d] 
     & h_2^*E_2 \ar[d]
     \\
     P_1 \ar[r]^-{g} & P_2
}
$$
given by the fiber product vb-map
$
    H^* \varphi= g \times_f \varphi : P_1 \times_{h_1,\pi_1} E_1 \rightarrow P_2 \times_{h_2,\pi_2} E_2$ associated to the cospan $(H,\underline{ \smash \pi}):(g \Rightarrow f \Leftarrow \varphi)$ of 2-map in $\Smooth$, where $\underline{\smash  \pi}$ is the vb-submersion $(\pi_2,\pi_1)$.
%
%
\par
Furthermore, given a pair of horizontally composable 2-maps $l \overset{K}{\Rightarrow} g\overset{H}{\Rightarrow} f$ and a vb-map $\varphi$ over $f$, we have a natural identification\footnote{Notice that, strictly speaking, $(K\circ H)^*$ and $H^* K^*$ aren't functors since the collection of vb-maps over a fixed smooth map $f$ does not form a category, in general.}
\begin{align}
 (H \circ K)^*\varphi \cong K^*(H^* \varphi)
\end{align}
induced by the natural isomorphisms $(h_\ell \circ k_\ell)^* E_\ell \cong k_\ell^*h_\ell^* E_\ell$ with $\ell = 1,2$.
\par
Regarding the vertical counterpart, let $H = (h_2, h_1)$ and $K = (k_2,k_1)$ be a vertically composable pair of 2-maps, that is $k_1 = h_2$, then 
\begin{align}
 (K \bullet H)^* (\psi \circ \varphi) = (K^*\psi) \circ (H^*\varphi).
\end{align}
\begin{example}[Sharpening of 2-vb-maps.]\label{ex.sharp-2vbmap}
If $\Psi = (\psi_2,\psi_1): \varphi_1 \Rightarrow \varphi_2$ is a 2-vb-map with base 2-map $G= (g_2,g_1): f_1 \Rightarrow f_2$, then $G^*\varphi_2$ is a vb-map over $f_1$. 
We define the \textbf{sharpening} of $\Psi$ to be the 2-vb-map ${\Psi}_\dag: \varphi_1 \Rightarrow G^*\varphi_2$,  over the identity 2-map $ f_1 \Rightarrow f_1$,
given by ${\Psi}_\dag = (\psi_{2\dag},\psi_{1\dag})$ where $\psi_{\ell \dag} = \dtimes{\pi}{\psi_\ell}$.
In particular, the following triangle of vb-maps and 2-vb-maps between them commutes:
$$
\xymatrix@C=1pc@R=2pc{
    \varphi_1 \ar@{=>}[rr]^-{{\Psi}_\dag}  \ar@{=>}[rd]_-{\Psi}
    && G^*\varphi_2 \ar@{=>}[ld]^{\pr}
    \\
    & 
    \varphi_2
    &}
$$
where $\pr$ is the obvious projection.
\end{example}
\begin{example}\label{ex.sharp-diff}
  The sharp differential $f_\sharp: TM \rightarrow f^*TN$ of a smooth map $f: M \rightarrow N$ coincides, up to the tautological natural isomorphism $\id^*TM \cong TM$, with the pullback $(\underline{\smash f})^* f_*: \id^*TM \rightarrow f^*TN$ of $f_*: TM \rightarrow TN$ along the range lift $\underline{\smash f}: \id_M \Rightarrow f$ introduced in example~\ref{ex.map-as-2map}.
\end{example}
\begin{example}\label{ex.unbalanced}
 Let $F=(f_2, f_1)$  be a 2-map of immersions\footnote{This can be adapted for an arbitrary square of smooth maps, nevertheless one has to precise the direction of the pullback (horizontal/vertical).} $j_1 \Rightarrow j_2$ . 
 The tangent map $j_{2*}$ is a vb-immersion, hence its pullback $F^* j_{2*}$ also is. Thus,  $F_\sharp = (f_{2\sharp}, f_{1\sharp})$  defines a 2-map $j_{1*} \Rightarrow F^* j_{2*}$ in $\Immer\VB$, called the \textbf{sharp differential} of $F$.
 The verticalization $f_{1\sharp} \Rightarrow f_{2\sharp}$ of the square representing $F_\sharp$ (see below) produces a 2-vb-immersion $J_\flat = (F^*j_{2*}, j_{1*})$, called the \textbf{flat differential} of the 2-immersion $J= (j_2, j_1): f_1 \Rightarrow f_2$. 
 The double morphism $(F_\sharp, J_\flat)$ in $\VB^\square$ will be called the \textbf{h-sharp differential} of the double morphism $(F,J)$ in $\Smooth$. The h-sharp differential is clearly compatible with the horizontalization/verticalization.
\begin{align}\label{diag.unbalanced}
\begin{split}
     \xymatrix{
            TM_1 \ar[r]^-{f_{1\sharp}} \ar[d]_-{j_{1*}} \ar@{}[rd]|-{(F_\sharp, J_\flat)}& 
        f_1^*TM_2 \ar[d]^-{F^*j_{2*}} 
        \\
        TN_1 \ar[r]_-{f_{2\sharp}} &
        f_2^*TN_2
    }
    \end{split}
\end{align}
Notice that there is an alternative given by the 2-vb-square $(F_\flat, J_\sharp)$, say the ``v-sharp differential''.
\end{example}
%
%
%
%
\subsubsection{Normal bundle (immersion case).}
Let $F = (f_2,f_1) : j_1 \Rightarrow j_2 $ be a 2-map in $\Immer$, then the verticalization $J_\sharp$ of the unbalanced differential fits into the following morphism of exact sequences in $\VB$, depicted (after flipping) as:
$$
    \xymatrix@R=1.5pc@C=3pc{
       0 \ar[r]
       & TM_1 \ar[r]^-{j_{1\sharp}} \ar[d]_-{f_{1*}} 
       \ar@{}[rd]|-{\rotatebox[origin=c]{0}{$\overset{J_\sharp}{\Rightarrow}$}}
       & j_1^*TN_1 \ar[r] \ar[d]^-{J^* f_{2*}}
       \rRightarrow
       & \nuup(j_1) \ar[r] \ar[d]^-{J^* f_{2*} / f_{1*}}
       & 0
       \\
       0 \ar[r]
       & TM_2 \ar[r]_-{j_{2\sharp}} 
       & j_2^*TN_2 \ar[r] 
       & \nuup(j_2) \ar[r]
       & 0
    }
    $$
\begin{deff}\label{def-normaldiff}
    The quotient vb-map $\nuup^{(2)}(F):= J^* f_{2*} / f_{1*}$ is called the \textbf{normal differential} of the 2-map of immersions $F$. When $F= (f_2,f_1)$ we also write $\nuup^{(2)}(f_2,f_1)$.
\end{deff}
Notice the occurence of the flip~(\S\ref{par.dbcat-flip}) in the above definition.
Let us start by examinating the behaviour of the normal differential with respect to vertical and horizontal compositions in $\Immer$.
\begin{prop}\label{prop-normalbundle-composition2}
Given a pair of horizontally composable 2-maps $F: i \Rightarrow j$ and $G : j \Rightarrow k$ in $\Immer$, we have
$
\nuup^{(2)}(G) \circ \nuup^{(2)}(F) = \nuup^{(2)}(G \circ F).
$
\end{prop}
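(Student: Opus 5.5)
The plan is to reduce the statement to the functoriality of the quotient construction of \S\ref{par.quotvbmap} under horizontal concatenation, together with the compatibility of pullbacks with vertical composition.

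Write $F=(f_2,f_1): (i: M_1 \looparrowright M_2) \Rightarrow (j: N_1\looparrowright N_2)$ and $G=(g_2,g_1): j \Rightarrow (k: P_1 \looparrowright P_2)$, so that $G\circ F = (g_2 f_2, g_1 f_1)$. Denote by $I=(i,j)$, $J=(j,k)$ and $K=(i,k)$ the corresponding verticalized 2-immersions $f_1\Rightarrow f_2$, $g_1 \Rightarrow g_2$ and $g_1f_1 \Rightarrow g_2f_2$. Note that $K$ is the vertical composite $J \bullet I$ once we read the flipped squares in the verticalization (compare remark~\ref{rem.flip}). By definition~\ref{def-normaldiff},
$$\nuup^{(2)}(F) = I^*f_{2*}/f_{1*},\quad \nuup^{(2)}(G)= J^*g_{2*}/g_{1*},\quad \nuup^{(2)}(G\circ F)= K^*(g_2f_2)_*/(g_1f_1)_*.$$

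The first step is to compare the numerators. The chain rule gives $(g_2 f_2)_* = g_{2*}\circ f_{2*}$. The vertical composition rule for pullbacks, $(K\bullet H)^*(\psi\circ\varphi) = (K^*\psi)\circ(H^*\varphi)$ from \S\ref{par.pb-vbmap}, applied with the 2-maps $I$ and $J$, then gives
$$K^*(g_{2*}f_{2*}) = (J^*g_{2*})\circ(I^*f_{2*})$$
as vb-maps $i^*TM_2 \to j^*TN_2 \to k^*TP_2$. On the sub-bundles, the chain rule gives $(g_1f_1)_* = g_{1*}\circ f_{1*}$. So the squares $J_\sharp$ for $I$ and $J$, concatenated along the common edge $j_\sharp : TN_1 \to j^*TN_2$, give exactly the square $K_\sharp$. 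This uses that the sharp differentials are natural, i.e.\ that $J^*g_{2*}\circ j_\sharp = k_\sharp\circ g_{1*}$.

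The second step is functoriality of the quotient. Given two composable morphisms of short exact sequences $(\varphi,\psi)$ and $(\varphi',\psi')$, one has $(\varphi'\varphi)/(\psi'\psi) = (\varphi'/\psi')\circ(\varphi/\psi)$. This follows either from the explicit formula $[e]\mapsto[\varphi'\varphi(e)]$ or from the uniqueness of the induced map on cokernels (remark~\ref{rem.hor-cokernel}). Applying it to the pair of squares above yields
$$\nuup^{(2)}(G)\circ\nuup^{(2)}(F) = \nuup^{(2)}(G\circ F).$$

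The main obstacle is bookkeeping rather than content. The pullback identity in \S\ref{par.pb-vbmap} holds strictly, but the identification of the pullback bundles $i^*TM_2$ and of $(g_2f_2)_*$ with the composite involves the fibre product models $M_1\times_{M_2} TM_2$. I would check at the level of points that $(m,v)\mapsto (f_1(m), f_{2*}v)\mapsto (g_1f_1(m), g_{2*}f_{2*}v)$, so that the equality is strict and not merely up to a natural identification. I would also verify that the orientation conventions of the flip match $K = J\bullet I$ rather than its transpose.
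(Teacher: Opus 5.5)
Your proposal is correct and follows the paper's own proof essentially verbatim: the paper likewise combines the vertical-composition rule $(J\bullet I)^*(g_{2*}\circ f_{2*}) = (J^*g_{2*})\circ(I^*f_{2*})$ with the chain rule, the compatibility of quotient vb-maps with composition, and the observation that $J\bullet I$ is the vertical 2-immersion associated to $G\circ F$. Your extra checks are bookkeeping the paper leaves implicit: the naturality of the sharp differentials needed for the quotients to be defined, and the pointwise strictness of the fibre-product identifications.
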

\begin{proof}
  Let $I= (j,i)$ and $J=(k,j)$ be the vertical 2-immersions  associated to $F$ and $G$, respectively. Then, from $(J^*g_{2*})\circ(I^*f_{2*})= (J \bullet I)^*(g_{2*} \circ f_{2*})$, we obtain
 $$
 \frac{J^*g_{2*}}{g_{1*}} \circ \frac{I^*f_{2*}}{f_{1*}} 
 = \frac{(J \bullet I)^*(g_{2*} \circ f_{2*})}{g_{1*}\circ f_{1*}}
 =  \frac{(J \bullet I)^*(g_{2} \circ f_{2})_*}{(g_{1}\circ f_{1})_*}
 $$
 as expected, since $J \bullet I$ is the vertical 2-immersion associated to $G\circ F$.
\end{proof}
Now, given a composite of immersions $M \overset{i}{\looparrowright} N \overset{j}{\looparrowright} Q$, the normal bundle decomposes, albeit non-canonically, as
$\nuup(j \circ i) \cong \nuup(i)\oplus i^*\nuup(j).$
The same holds at the level of morphisms:
\begin{prop}\label{prop.vert-fun}
Let $H =(h_2,h_1): i_1 \Rightarrow i_2$ and $K=(k_2,k_1): j_1 \Rightarrow j_2$ be a pair of vertically composable 2-maps of immersions, i.e. $h_2 = k_1$,
 then there is a (non-canonical) isomorphism of vb-maps 
 $$
  \xymatrix@C=7pc@R=1pc{
    \nuup(j_1 \circ i_1) 
    \ar[r]^{\nuup^{(2)}(K \bullet H)}
    \ar@{}[d]|{\rotatebox[origin=c]{-90}{$\cong$}}
    & \nuup(j_2 \circ i_2)\ar@{}[d]|{\rotatebox[origin=c]{-90}{$\cong$}}
    \\
    i^*_{1}\nuup(j_1) \oplus \nuup(i_1) \ar[r]_-{I^*\nuup^{(2)}(K) \oplus \nuup^{(2)}(H)}& i_{2}^*\nuup(j_2) \oplus \nuup(i_2)
  }
  $$
\end{prop}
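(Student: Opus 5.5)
The plan is to realize both sides as quotient vb-maps and to build the isomorphism from a short exact sequence of vb-maps, split compatibly on source and target. Set $M_\ell \overset{i_\ell}{\looparrowright} N_\ell \overset{j_\ell}{\looparrowright} Q_\ell$ for $\ell=1,2$, with $h_1: M_1\to M_2$, $h_2=k_1: N_1 \to N_2$ and $k_2: Q_1 \to Q_2$.

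\textbf{Step 1: the exact sequence.} For each $\ell$ there is the canonical short exact sequence of vector bundles over $M_\ell$
$$
0 \to \nuup(i_\ell) \to \nuup(j_\ell\circ i_\ell) \to i_\ell^*\nuup(j_\ell) \to 0 .
$$
It comes from the flag $TM_\ell \subseteq i_\ell^*TN_\ell \subseteq (j_\ell i_\ell)^*TQ_\ell$, whose inclusions are $i_{\ell\sharp}$ and $i_\ell^*j_{\ell\sharp}$. The first map is the quotient of the inclusion $i_\ell^*TN_\ell \hookrightarrow (j_\ell i_\ell)^*TQ_\ell$ by $\id_{TM_\ell}$, as in \S\ref{par.quotvbmap}. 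The second is the quotient of $i_\ell^*TN_\ell \to i_\ell^*TN_\ell$ by that same inclusion, using $\nuup(j_\ell)=j_\ell^*TQ_\ell/TN_\ell$ and the fact that pullback commutes with quotients.

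\textbf{Step 2: naturality.} I will show that the three vb-maps $\nuup^{(2)}(H)$, $\nuup^{(2)}(K\bullet H)$ and $I^*\nuup^{(2)}(K)$, where $I=(i_2,i_1)$ is the vertical 2-immersion attached to $H$, form a morphism between the two sequences. The first two are defined as quotients of the pulled-back differentials $I^*h_{2*}$ and $(K\bullet H)$-pullbacks of $k_{2*}$ (Definition~\ref{def-normaldiff}). The key input is the vertical identity $(K\bullet H)^*(\psi\circ\varphi)=(K^*\psi)\circ(H^*\varphi)$, together with $(H\circ K)^*\cong K^*H^*$, applied to the flag above. With these, $k_{2*}$ restricted to $TN_1$ is $h_{2*}=k_{1*}$, and $h_{2*}$ restricted to $TM_1$ is $h_{1*}$. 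Commutativity of both squares then follows from the explicit formula $(\varphi/\psi)[e]=[\varphi(e)]$, because every map involved is induced by differentials on representatives. For the right-hand square one uses that pullback by $i_\ell$ commutes with quotients, which identifies $i_\ell^*$ of the quotient map $\nuup^{(2)}(K)$ with $I^*\nuup^{(2)}(K)$.

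\textbf{Step 3: the splitting.} The remaining task is to choose splittings $\sigma_\ell: i_\ell^*\nuup(j_\ell)\to \nuup(j_\ell i_\ell)$ with
$$
\nuup^{(2)}(K\bullet H)\circ\sigma_1=\sigma_2\circ I^*\nuup^{(2)}(K).
$$
Given such $\sigma_\ell$, the identification $\nuup(i_\ell)\oplus i_\ell^*\nuup(j_\ell)\cong\nuup(j_\ell i_\ell)$ intertwines the middle map with the direct sum, by the five-lemma-style argument for split sequences. This compatibility is the main obstacle, since arbitrary splittings do not intertwine the maps.

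To handle it, I will first choose a complement $C_2\subseteq (j_2i_2)^*TQ_2$ to $i_2^*TN_2$, for instance via a metric. I then want a complement $C_1$ on the source side mapped into $C_2$ by the pulled-back differential of $k_2$. The natural attempt is to take $C_1$ as the preimage of $C_2$ intersected with a complement of the kernel. This works whenever the induced map on the quotients behaves well, and in general $C_1$ can be adjusted fiberwise by a smooth correction term. The correction is needed because that map need not be injective, and in that case a preimage complement cannot be chosen blindly.

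If the adjustment fails in general, the fallback is to read ``isomorphism of vb-maps'' in the weaker sense of the paper's conventions, namely an identification $(\psi,\varphi)$ with $f\psi=\varphi g$. In that reading, splitting choices on the two sides can be made independently, and the extension class that obstructs them is handled by modifying $\varphi$ by a map $i_1^*\nuup(j_1)\to\nuup(i_2)$.
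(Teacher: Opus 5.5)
Your Steps 1 and 2 are fine: $\nuup^{(2)}(K\bullet H)$ is a morphism from the first short exact sequence to the second, with outer components $a=\nuup^{(2)}(H)$ and $c=I^*\nuup^{(2)}(K)$. The gap is Step 3, and it cannot be closed, because the proposition is false as stated. With respect to complements $C_1,C_2$, the middle map has block form $\left(\begin{smallmatrix} a & x\\ 0 & c\end{smallmatrix}\right)$. Changing the complements by $s_1\colon C_1\to\nuup(i_1)$ and $s_2\colon C_2\to\nuup(i_2)$ replaces $x$ by $x+as_1-s_2c$. No such change affects the snake-lemma connecting map $\partial\colon\ker c\to\coker a$, given fiberwise by $[v]\mapsto[k_{2*}v]$ for $v\in TQ_1$ with $k_{2*}v\in TN_2$. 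A diagonal form therefore requires $\partial=0$.

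Your fallback fails as well. Any identification $(\psi,\varphi)$ of vb-maps preserves fiberwise rank, while fiberwise $\operatorname{rank}\nuup^{(2)}(K\bullet H)=\operatorname{rank}a+\operatorname{rank}c+\operatorname{rank}\partial$. Also, ``modifying $\varphi$ by a map $i_1^*\nuup(j_1)\to\nuup(i_2)$'' changes the vb-map itself, not the identification.

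Here is a concrete counterexample. Let $M_1=N_1=M_2=\{\ast\}$ and $Q_1=N_2=Q_2=\R$. Take $i_1=h_1=\id_{\{\ast\}}$, let $j_1=i_2=h_2$ be the inclusion of the origin, and let $j_2=k_2=\id_\R$. Both squares commute and every map is an embedding. Then $K\bullet H=(\id_\R,\id_{\{\ast\}})$ is the identity 2-map of $\{\ast\}\hookrightarrow\R$, so $\nuup^{(2)}(K\bullet H)=\id_\R$ has rank one. On the other hand, $\nuup(i_1)=0$ and $i_2^*\nuup(j_2)=0$ force $I^*\nuup^{(2)}(K)\oplus\nuup^{(2)}(H)$ to be the zero map $\R\oplus 0\to 0\oplus\R$.

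The paper's appendix argument has exactly the same hole. It identifies the three normal differentials as cokernels and obtains $0\to\coker(I_\sharp)\to\coker(\mathbb{I}^*J_\sharp\circ I_\sharp)\to\coker(\mathbb{I}^*J_\sharp)\to 0$ in a category of vb-maps. It then simply asserts that this sequence splits. Short exact sequences in an arrow category of vector bundles need not split, and the example above is a non-split one. So what is missing in both routes is a hypothesis forcing $\partial=0$.

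\begin{itemize}
\item If $\nuup^{(2)}(H)$ is fiberwise surjective, take $s_2=0$ and $s_1=-r\,x$, with $r$ a right inverse of the sharpening of $a$.
\item If $\nuup^{(2)}(K)$ is fiberwise injective (e.g.\ $K$ a regular immersion square), take $s_1=0$ and $s_2=x\circ c^{-1}$ on the image of $c$, extended by zero on a complement.
\end{itemize}

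In the injective case, $s_2$ is a section over $M_2$ while the constraint is imposed along $h_1$. This works cleanly for the sharpened maps into $h_1^*\nuup(j_2\circ i_2)$. It can genuinely fail for the unsharpened vb-maps when $h_1$ is not injective, since $s_2$ at a point of $M_2$ must satisfy the constraint for every point of its $h_1$-fiber at once.
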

Since the proof of the latter proposition happens to be more technical than the former one, it will be delayed until the appendix~\ref{app.vert-fun}.
\begin{example}
    Let $\pi: E \rightarrow M$ be a smooth vector bundle with zero section $0_E: M \hookrightarrow E$. 
    Then the pair $(\pi,\id_M)$ defines a 2-map $\underline{\smash\pi}: 0_E \Rightarrow \id_M$ in $\Embed$. 
    In that case, the normal differential $\nuup^{(2)}(\underline{\smash \pi})$ identifies with $\pi$ through the vertical lift 
    $\nuup(0_E) \cong E$ and $\nuup(\id_M) = M$. 
    Analogously, let $\delta: \R \times E \rightarrow E$ be the $(\R,\cdot)$-action defined by $\delta_\lambda(m,\xi) = (m, \lambda \xi)$. 
    Then 
    \begin{align} 
    \begin{aligned}
    \xymatrix@C=1.5pc@R=1.5pc{
    M \times \R \ar[r]^-{\id_M} 
    \ar@{}[d]|{\rotatebox[origin=c]{90}{$\longleftarrow\joinrel\rhook$}}_-{0_E \times \id}
    \rRightarrow
    & M
    \ar@{}[d]|{\rotatebox[origin=c]{90}{$\longleftarrow\joinrel\rhook$}}^-{0_E}
    \\
    E \times \R
    \ar[r]_-{\delta}  
    & E
    }   
    \end{aligned}
    \end{align}
    defines a 2-map $\underline{\smash\delta}$ in $\Embed$. The normal differential $\nuup^{(2)}(\underline{\smash \delta})$  coincides with the action $\delta$ under the identifications $\nuup(0_E \times \id) \cong E \times \R$ and $\nuup(0_E)\cong E$ (through the vertical lifts). In particular this action is regular in the sense of~\cite{bursztyn-cabrera-delhoyo-2016}.
\end{example}
\subsubsection{Description of the normal bundle using homogeneity structure.}
Following~\cite{grabowski-rotkiewicz2009}, a vector bundle structure on a manifold $E$ is characterized by an action $\delta: \R \times E \rightarrow E$, $\delta(\lambda,e) = \delta_\lambda(e)$, of the monoid $(\R,\cdot)$ onto $E$ subject to the following regularity condition: for $e \in E$,
\begin{align}
     \ddtvar \delta(\lambda,e) =0 \quad \Rightarrow \quad e = \delta(0,e)
\end{align}
or equivalently, the vertical lift $\vlift_\delta: E \rightarrow TE$ is injective. The monoid action $\delta$ is called the \textbf{homogeneity structure} of the vector bundle $E$. A vector bundle structure with total space $E$ and homogeneity structure will summoned as a pair $(E, \delta)$, or a triple $(E,\delta,M)$ if one wants to precise the base manifold which, in that setting, is the submanifold $M = \delta_0(E)$ of $E$.
\begin{remark}
    We assume that $\delta_0: E \rightarrow M$ has global constant rank to guarantee that $\delta_0$ is a surmersion and $M$ is an embedded submanifold of $E$.
\end{remark}
In addition, within this framework, a vb-map $\varphi$ from $(E,\delta^E)$ to $(F, \delta^F)$ is nothing more than a $(\R,\cdot)$-equivariant smooth map $E \rightarrow F$, that is $\varphi$ intertwines the respective homogeneity action:
$
 \varphi \circ \delta^{E} = \delta^{F} \circ (\id_\R \times \varphi).
$
In particular, the notion of vb-immersion (resp. vb-embedding, etc...) coincides with $(\R,\cdot)$-equivariant immersion (resp. equivariant embedding, etc...).
%
%
\subsubsection{Pullback homogeneity structure.}\label{par.pb-homog-struct}
Let $(E,\delta, M)$ be a vector bundle with homogeneity action $\delta$, and $h: P \rightarrow M$ be a smooth map.
The pullback vector bundle $h^*E$ sits as a sub-bundle of the trivial bundle $P \times E \rightarrow P$. 
In particular, the restriction of the ambient homogeneity action $\id_P \times \delta: \R \times P \times E \rightarrow P \times E$ to $h^*E$ is well-defined and regular. Set the \textbf{pullback homogeneity structure} to be the $(\R,\cdot)$-action $h^*\delta: \R \times h^*E \rightarrow h^*E$ given  by the restriction
$h^*\delta := (\id_P \times \delta)\vert_{h^* E}.$
For an embedding $j: N \hookrightarrow M$, we sometimes use the notation $\delta\vert_N$ (instead of $j^*\delta$) and call it the restriction of the homogeneity action $\delta$ to $N$.
%
%
\subsubsection{Quotient homogeneity structure.}\label{par.quot-homog-struct}
\begin{prop}
   Let $(E, \delta^E, M)$ be a vector bundle and $F\hookrightarrow E$ be a wide vb-embedding. 
    The quotient vector bundle $E/F$ inherits a natural homogeneity structure, called the \textbf{quotient homogeneity structure}, defined by
    $
    \delta^{E/F}(\lambda,[e]) =  [\delta^E(\lambda,e)].
    $
\end{prop}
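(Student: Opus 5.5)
We must check three things: that $\delta^{E/F}$ is well defined, that it is a smooth action of the monoid $(\R,\cdot)$, and that it is regular in the sense of the homogeneity condition. Its vector bundle structure should then be the usual quotient bundle $E/F\to M$.

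\emph{Well-definedness.} Since $F\hookrightarrow E$ is a wide vb-embedding, it is a vb-map and hence $(\R,\cdot)$-equivariant. This means $\delta^E_\lambda$ maps $F$ into $F$, restricting to $\delta^F_\lambda$. In fibrewise terms, $\delta^E_\lambda$ is multiplication by $\lambda$ on each fibre $E_m$ (with $\delta_0$ the projection onto the zero section), so it is fibrewise linear. Hence if $e-e'\in F_m$, then $\delta^E_\lambda(e)-\delta^E_\lambda(e')=\lambda(e-e')\in F_m$, and $[\delta^E(\lambda,e)]$ depends only on $[e]$.

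\emph{Smoothness and the action axioms.} Let $q:E\to E/F$ be the quotient projection. It is a surjective submersion, since it is a vb-map over $\id_M$ that is fibrewise surjective. Then $\id_\R\times q:\R\times E\to\R\times E/F$ is also a surjective submersion. The map $q\circ\delta^E$ is smooth and constant on its fibres, so it descends to a smooth map $\delta^{E/F}$ with $\delta^{E/F}\circ(\id_\R\times q)=q\circ\delta^E$. The monoid identities $\delta^{E/F}_1=\id$ and $\delta^{E/F}_\lambda\circ\delta^{E/F}_\mu=\delta^{E/F}_{\lambda\mu}$ follow from the corresponding identities for $\delta^E$ and the surjectivity of $q$.

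\emph{Regularity, the only real point.} Suppose $\frac{d}{d\lambda}\big|_{\lambda=0}\delta^{E/F}(\lambda,[e])=0$. Since $q$ intertwines the actions, this derivative equals $q_*\bigl(\vlift_{\delta^E}(e)\bigr)$. Here $\vlift_{\delta^E}(e)\in T_{\delta_0 e}E$ is the vertical vector corresponding to $e$ in $T_{0_m}E\cong T_mM\oplus E_m$. The map $q$ is linear on fibres and restricts to the identity on the zero section. So $q_*$ at $0_m$ acts as $\id_{T_mM}\oplus(E_m\to E_m/F_m)$. Hence the derivative vanishes exactly when the image of $e$ in $E_m/F_m$ is zero, that is, when $[e]=[0_m]=\delta^{E/F}(0,[e])$. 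Thus $\vlift_{\delta^{E/F}}$ is injective.

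Finally, we check that the vector bundle structure determined by $\delta^{E/F}$ via \cite{grabowski-rotkiewicz2009} is the standard quotient bundle. Its zero locus $\delta^{E/F}_0(E/F)=q(M)\cong M$, and $\delta^{E/F}_0$ has constant rank because $\delta^{E/F}_0\circ q=q\circ\delta^E_0$. The scalar multiplication agrees with the standard one by construction, and a homogeneity structure determines the linear structure. Naturality is automatic: $q$ is equivariant, so it is a vb-map for these structures.
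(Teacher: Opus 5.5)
Your proposal is correct and follows essentially the same route as the paper. Well-definedness comes from fibrewise linearity. Regularity comes from noting that the $\lambda$-derivative of $\delta^{E/F}(\lambda,[e])$ is $q_*$ applied to the vertical lift of $e$, so it vanishes only when that vertical vector lies in the vertical bundle of $F$ along $M$, which forces $e\in F$. Your explicit description of $q_*$ at the zero section via $T_{0_m}E\cong T_mM\oplus E_m$, and your smoothness argument through the surjective submersion $\id_\R\times q$, supply details that the paper leaves as a ``direct computation''.
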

\begin{proof}
Without loss of generality we assume that $F \subseteq E$.
The well-definiteness follows from a direct computation.
To show the regularity, assume that 
$
\ddtvar \delta^{E/F}(\lambda,[e]) = 0,
$
then choose a smooth lift of the curve $\delta^{E/F}(-, [e]) : \R \rightarrow E/F$ into a curve $\delta^E(-,e) : \R \rightarrow E$. 
Our hypothesis implies that 
$
\ddtvar \delta^E(\lambda,e) \in VF\vert_M.
$
Thus, there exists $f \in F$ such that 
$
\ddtvar \delta^E(\lambda,e) = \ddtvar \delta^F(\lambda,f).
$
But since the vertical lift is an isomorphism, $e = f$ and $[e]$ lies in the zero section of $E/F$.
\end{proof}
\begin{remark}
 The quotient homogeneity is obtained as the quotient of homogeneity structure $\delta^E/\delta^F$ in the following sense: the following diagram commutes and the rows are exact,
 $$
 \xymatrix@R=1pc{
        \R \times 0 \ar[r] \ar[d]_-{\delta^{\triv}}
        &\R \times F  \xhookr[1.2pc] \ar[d]_-{\delta^F}
        &  \R \times E   \ar[r]  \ar[d]^-{\delta^E}
        & \R \times (E/F) \ar[d]^-{\delta^{E/F}} \ar[r]
        & \R \times 0 \ar[d]^-{\delta^{\triv}}
        \\ 
        0 \ar[r] 
        & F  \xhookr[2.2pc]
        &  E   \ar[r]
        & (E/F) \ar[r]
        & 0
    }
 $$
 where $\delta^{\triv}$ is the tautological homogeneity action onto the zero-dimensional vector bundle over $M$. Notice that the top row is a short exact sequence of bundle over $\R \times M$.
\end{remark}

\begin{example}[Normal bundle]
    Let $j: M \looparrowright N$ be an immersion of smooth manifolds. Since the normal bundle $\nuup(j)=j^*TN/TM $ is defined as the quotient of a pullback vector bundle, its homogeneity structure is described by 
    $
    \delta^{\nuup(j)}(\lambda, [m, \eta]) = [m, \delta^{TN}(\lambda,\eta)]$ for $m \in M$ and $\eta \in T_{j(m)}N$.
\end{example}
%
%
%
\section{Double vector bundles}\label{sec.dvb}
Following~\cite{grabowski-rotkiewicz2009}, a \textbf{double vector bundle} $(D,\delta^h, \delta^v)$ is a smooth manifold $D$ equipped with two commuting smooth regular $(\R,\cdot)$-actions $\delta^h$ and $\delta^v$, respectively called the horizontal action and the vertical action. The vector bundle projections $\pi^h$ and $\pi_v$, defined by $\pi^h(d) = \delta^h(0,d)$ and $\pi^v(d) = \delta^v(0,d)$, get organized into a commuting square (which we encode by the corresponding  ``matrix'' as shown on the right-hand side below):
$$
\left. \vcenter{
\xymatrix@C=1.2pc@R=1.2pc{
    D \ar[r]^{\pi^h} \ar[d]_{\pi^v}
    & A \ar[d]^{\pi^v \vert_A} 
    \\
    B \ar[r]_{\pi^h\vert_B} & M} 
} \right.
\quad =: \quad 
\left[
    \begin{matrix}
        D & A \\ B & M
    \end{matrix}
\right]
$$
Alternatively, following~\cite{mackenzie2005}, we write $(D,A,B,M)$ in order to summon a double vector bundle by precising its side bundle $A$ and $B$. The corresponding homogeneity structure consists in the scalar multiplications of the vector bundles $D \rightarrow A$ and $D\rightarrow B$.
If $A=B$, then the double vector bundle is said to be symmetric.
The (total) \textbf{flip} of $(D,\delta^h, \delta^v)$ is the double vector bundle $\flip(D)= (D,\delta^v, \delta^h)$, in which the $(\R,\cdot)$-actions have been reversed.
As double vector bundles, we distinguish $D$ from $\flip(D)$ and will treat $\flip: D \rightarrow \flip(D)$ as a special kind of morphism of double vector bundles. 
In contrast, a (non-flipping) \textbf{dvb-map} $\varphi$ is a smooth map $\varphi$ between the total space intertwining the respective horizontal homogeneity actions, as well as the vertical ones. That is, $\varphi: (D_1,A_1,B_1, M_1) \rightarrow (D_2,A_2,B_2, M_2)$ is a dvb-map if $\varphi: (D_1,A_1) \rightarrow (D_2,A_2)$ and $\varphi: (D_1,B_1) \rightarrow (D_2,B_2)$ are vb-maps such that $\varphi\vert_{A_1}: (A_1,M_1) \rightarrow (A_2,M_2)$ and $\varphi\vert_{B_1}: (B_1,M_1) \rightarrow (B_2,M_2)$ also are vb-maps.
In practice, a dvb-map will be viewed as a tuple of vb-maps $(\varphi,\alpha, \beta, f)$, with $\alpha = \varphi\vert_{A_1}$, $\beta = \varphi\vert_{B_1}$, and $f = \varphi\vert_{M_1}$, organized in a ``matrix'' as follows:
\begin{align*}
 \begin{split}
    \left[
        \vcenter{
        \xymatrix@R=0pc@C=0pc{
            D_1  & A_1 \\ B_1 & M_1
        }}    
        \right]
    \xrightarrow{ \quad 
        \left[\begin{smallmatrix}
         {\varphi} & \alpha 
        \\ {\beta} & f 
        \end{smallmatrix}\right]
        \quad
        }
        \left[
        \vcenter{
        \xymatrix@R=0pc@C=0pc{
            D_2
            & A_2 
            \\ B_2  
            & M_2
        } }
        \right]
 \end{split}
\end{align*}
The vb-maps $\alpha$ and $\beta$ will respectively be called horizontal and vertical \textbf{side-maps} of the dvb-map $\varphi$.
The category of double vector bundles equipped with non-flipping dvb-maps will be denoted by $\DVB$, and the associated double category of commutative squares by $\DVB^\square$.
In fact, the most general kind of dvb-map that we will encounter in the sequel are, roughly speaking, composites of non-flipping dvb-maps and flip isomorphisms, but we reserve the appellation ``dvb-maps'' for the non-flipping ones.
A \textbf{dvb-immersion} (resp. dvb-embedding) is a dvb-map as before such that $\varphi, \alpha, \beta,f$ are immersions (resp. embedding). Similarly, a \textbf{dvb-submersion} (resp. dvb-surmersion) is a dvb-map $\varphi$ such that $\varphi, \alpha, \beta,f$ are submersion (resp. surmersion). In particular, if $\varphi$ is a dvb-immersion (resp. dvb-submersion), then each of the vb-maps $\varphi, \alpha, \beta$ are fiberwise injective (resp. surjective).
By convention, we assume that submersions and immersions have global constant rank. Along the same lines, a constant-rank dvb-map is a dvb-map with constant-rank components $\varphi, \alpha,\beta, f$.
%
\begin{example}\label{ex.dvb-TE}
 Let $E \rightarrow M$ be a vector bundle with homogeneity structure $\delta$. The tangent bundle $TE \rightarrow E$ has a canonical homogeneity action $\delta^{T\!E}_E$ (the horizontal one) which turns out to be compatible with the tangent action $\delta_*: \R \times TE \rightarrow TE$. 
 It results that $(TE, \delta^{T\!E}_E, \delta_*)$ is a double vector bundle with horizontal side bundle $E$ and vertical side bundle $TM$. In particular, the double tangent bundle $TTM$ of a smooth manifold $M$ admits a dvb-structure.
\end{example}
\begin{example}\label{ex.f**}
 Let $f: M \rightarrow N$ be a smooth map, then the double differential $f_{**}: TTM \rightarrow TTN$ is a dvb-map~\cite{fisher-laquer1999}. More generally, if $\varphi: E \rightarrow F$ is a vb-map, then its differential defines dvb-map $\varphi_*: TE \rightarrow TF$ where $TE$ and $TF$ are equipped with the dvb-structure of the previous example.
\end{example}

\begin{example}[Direct product]
    Let $(D, \delta^h, \delta^v)$ and $(D', \kappa^h, \kappa^v)$ be two double vector bundles, then $(D \times D', \delta^h \times \kappa^h, \delta^v \times \kappa^v)$ is also a double vector bundle when equipped with the diagonal actions, namely $\delta^h \times \kappa^h: \R \times D \times D' \rightarrow D \times D'$ is defined as $(\delta^h \times \kappa^h)(\lambda, d,d') = (\delta^h(\lambda,d), \kappa^h(\lambda,d'))$, and analogously for the vertical action. 
    The (direct) product double vector bundle $D \times D'$ turns out to have various associated flip in addition to the total flip, referred as \textbf{partial flip}:  $D \times \flip(D')$ and $\flip(D) \times D'$ with respective homogeneity structure $(\delta^h \times \kappa^v, \delta^v \times \kappa^h)$ and $(\delta^v \times \kappa^h, \delta^h \times \kappa^v)$. 
    More generally, a $n$-fold product double vector bundle $D_1 \times \dots \times D_n$ has $2^{n-1}$ distinct, say strictly, partial flips (excluding the trivial identity flip and the total flip).
\end{example}
In order to emphasize the distinction between the non-flipping dvb-map and the flipping-ones, we introduce the following - empirical - notion:
\begin{deff}\label{def.flip-map}
 Let $D \subseteq D_1 \times \dots \times D_n$ and $D' \subseteq D'_1 \times \dots \times D'_n$ be two double vector sub-bundles.
 A \textbf{flip isomorphism} $D \flipar D'$ (resp. flip map) is the composite of the restriction of a (trivial, partial or total) flip from the ambient double vector bundle, with a dvb-isomorphism (resp. dvb-map).
 The double vector bundle equipped with composite of flip maps and dvb-maps forms a category denoted $\DVB_f$.
\end{deff}
%
%
%
\subsubsection{Fiber products}
A cospan in $\DVB$, or shortly a dvb-cospan,
\begin{align}\label{dvb.cospan}
    \left[
    \vcenter{
    \xymatrix@C=0pc@R=0pc{
    D_1  & A_1 \\ B_1  & M_1
    }}
    \right]
    \xrightarrow{
        \left[
        \begin{smallmatrix}
            \varphi_1 & \alpha_1 \\ \beta_1 & f_1
        \end{smallmatrix}
        \right]
        } 
    \left[
    \vcenter{
    \xymatrix@C=0pc@R=0pc{
    D & A  \\ B & M
    }}
    \right]
    \xleftarrow{
        \left[
        \begin{smallmatrix}
            \varphi_2 & \alpha_2 \\ \beta_2 & f_2
        \end{smallmatrix}
        \right]
        } 
    \left[
    \vcenter{
    \xymatrix@C=0pc@R=0pc{
    D_2 & A_2  \\ B_2  & M_2
    }}
    \right]
\end{align}
is called \textbf{good} if: 
\begin{itemize}
\setlength\itemsep{0pc}
 \item[(i)] $(M_1 \xrightarrow{f_1} M \xleftarrow{f_2} M_2)$ is a good cospan in $\Smooth$;
 \item[(ii)] $(A_1 \xrightarrow{\alpha_1} A \xleftarrow{\alpha_2} A_2)$ is a good cospan in $\VB$ over $(M_1 \xrightarrow{f_1} M \xleftarrow{f_2} M_2)$;
 \item[(iii)] $(B_1 \xrightarrow{\beta_1} B \xleftarrow{\beta_2} B_2)$ is a good cospan in $\VB$ over $(M_1 \xrightarrow{f_1} M \xleftarrow{f_2} M_2)$;
 \item[(iv)] $(D_1 \xrightarrow{\varphi_1} D \xleftarrow{\varphi_2} D_2)$ is a good cospan in $\VB$ over $(A_1 \xrightarrow{\alpha_1} A \xleftarrow{\alpha_2} A_2)$;
  \item[(v)]$(D_1 \xrightarrow{\varphi_1} D \xleftarrow{\varphi_2} D_2)$ is a good cospan of $\VB$ over $(B_1 \xrightarrow{\beta_1} B \xleftarrow{\beta_2} B_2)$.
\end{itemize}
\begin{example}
 If $\varphi_1$ is a dvb-submersion then the cospan~(\ref{dvb.cospan}) is good.
\end{example}

\begin{prop}\label{prop.dvb-fp}
 Let $(D_1 \rightarrow D \leftarrow D_2)$ be a good cospan of double vector bundles as above. 
 Then, the fiber product $D_1 \times_D D_2$ is a well-defined double vector bundle:
 $$
    \vcenter{
    \xymatrix@C=1pc@R=1pc{
    D_1 \times_D D_2 \ar[r] \ar[d] & A_1 \times_M A_2 \ar[d] \\ B_1 \times_M B_2 \ar[r] & M_1 \times_M M_2
    }}
 $$
\end{prop}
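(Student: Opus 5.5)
We will use the homogeneity-structure description of double vector bundles from~\cite{grabowski-rotkiewicz2009}: it suffices to exhibit two commuting regular $(\R,\cdot)$-actions on $D_1\times_D D_2$ whose zero-loci are the expected side bundles. First, conditions (i), (iv) and (v) make $D_1\times_D D_2$ a smooth manifold in two ways. Condition (iv) says that $D_1\times_D D_2$ is a vector sub-bundle of $D_1\times D_2\to A_1\times A_2$ over $A_1\times_A A_2$. Condition (v) gives the same statement over $B_1\times_B B_2$. In both cases the total space is the set-theoretic fibre product $\{(d_1,d_2): \varphi_1(d_1)=\varphi_2(d_2)\}$. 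Being a smooth vector sub-bundle of the product, it is an embedded submanifold of $D_1\times D_2$, so the two smooth structures coincide with the induced one.

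Next we define the actions. On the product $D_1\times D_2$, take the direct product double vector bundle structure $(\delta_1^h\times\delta_2^h,\ \delta_1^v\times\delta_2^v)$ from the Example on direct products. Since $\varphi_1,\varphi_2$ are dvb-maps, they intertwine the horizontal actions and also the vertical ones. Hence both product actions preserve the subset $D_1\times_D D_2$, and they restrict to smooth actions because it is an embedded submanifold. Commutativity is inherited from the ambient actions. The horizontal restriction is, by construction, the homogeneity structure of the fibre-product vector bundle $D_1\times_D D_2\to A_1\times_A A_2$ given by (iv). It is therefore regular, and its zero-locus is $\{(\pi^h d_1,\pi^h d_2)\}=A_1\times_A A_2$. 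Symmetrically, (v) gives regularity of the vertical action, whose base is $B_1\times_B B_2$. As a subtlety, we check regularity directly as well: a curve $\lambda\mapsto\delta^h_\lambda(d_1,d_2)$ with vanishing derivative at $0$ has vanishing derivative in each factor, so regularity in each $D_i$ gives $d_i=\delta^h_0(d_i)$.

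Finally we identify the corners and side bundles. The restrictions of the vertical action to $A_1\times_A A_2$ are the product of the vertical actions on $A_1$ and $A_2$. By (ii), this is the homogeneity structure of the good fibre product $A_1\times_A A_2\to M_1\times_M M_2$, and similarly (iii) handles $B_1\times_B B_2$. The common base $\delta^h_0\delta^v_0(D_1\times_D D_2)$ is $M_1\times_M M_2$, a manifold by (i). This yields the displayed square.

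The main obstacle is the first step: making sure the smooth structures coming from (iv) and (v), together with those on the sides from (ii), (iii) and (i), are all the embedded-submanifold structures inside the ambient products. This is what makes the restricted actions smooth and the square consistent. We would handle it by repeatedly using that a smooth sub-bundle of a product is an embedded submanifold, and that each of the required sets is cut out set-theoretically as the same fibre product.
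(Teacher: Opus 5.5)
Your proposal is correct and follows essentially the same route as the paper: you restrict the product homogeneity structures $(\delta_1^h\times\delta_2^h,\ \delta_1^v\times\delta_2^v)$ to the embedded submanifold $D_1\times_D D_2$ and check regularity factorwise, exactly as the paper does. Your extra bookkeeping — that the smooth structures from (iv) and (v) coincide, and the identification of the side bundles and the base via (i)–(iii) — fills in details the paper leaves implicit.
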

\begin{proof}
 Let $(\delta_1^h, \delta^v_1)$, $(\delta_2^h, \delta^v_2)$ and $(\delta^h, \delta^v)$ be the homogeneity structure of $D_1, D_2$ and $D$ respectively. We claim that taking the fiber product of the homogeneity structure returns a homogeneity structure $(\delta_1^h \times_{\delta^h} \delta^h_2, \delta_1^v \times_{\delta^v} \delta^v_2)$. Indeed, the action $\delta_1^h \times_{\delta^h} \delta^h_2: \R \times (D_1 \times_D D_2) \rightarrow D_1 \times_D D_2$ is regular: 
 assume that $(d_1,d_2) \in D_1 \times_D D_2$ with
 $
 \ddtvar (\delta_1^h \times_{\delta^h} \delta^h_2)(\lambda,d_1,d_2)  \: = \: 0.
 $
 Since $\delta_1^h \times_{\delta^h} \delta^h_2$ is the restriction of the $(\R,\cdot)$-action $\delta_1^h \times \delta^h_2$  to the submanifold $D_1 \times_D D_2$, this implies
 $\ddtvar (\delta_1^h(\lambda, d_1)) =0$ and 
 $\ddtvar (\delta_1^h(\lambda, d_1)) =0.$
 Thus by regularity, $d_1 = \delta_1^h(0, d_1)$ and $d_2 = \delta_2^h(0, d_2)$, that is $(d_1,d_2) = (\delta_1^h \times_{\delta^h}\delta_2^h)(0, d_1, d_2)$. By the same argument, the other involved $(\R,\cdot)$-actions are again regular. 
\end{proof}
As in $\VB$, the fiber product in $\DVB$ is natural in the following sense: a morphism of good dvb-cospan $(Q_1 \!\rightarrow \! Q \!\leftarrow\! Q_2) \rightarrow (D_1 \!\rightarrow \! D \!\leftarrow \!D_2)$ induces a dvb-map between the respective fiber products 
$
  Q_1 \times _Q Q_2 \rightarrow D_1 \times_D D_2.
$
%
%
%
\subsection{Side-pullback}
\subsubsection{Horizontal and vertical lift of a vector bundle}\label{par.dvb-lift}
Let $ \pi :E \rightarrow M$ be a vector bundle with homogeneity structure $\delta: \R \times E \rightarrow E$. Then, we define the \textbf{vertical dvb-lift} and the \textbf{horizontal dvb-lift} of $E$ to be the following double vector bundles:
\begin{align}
    \begin{split}
        E^v\: = \: 
        \left[
        \vcenter{
        \xymatrix@R=0pc@C=0pc{
            E  & M  \\ E  & M
        }}    
        \right],
        \qquad 
         E^h \: = \:
        \left[
        \vcenter{
        \xymatrix@R=0pc@C=0pc{
            E  & E  \\ M  & M
        } }
        \right].
    \end{split}
    \end{align}
Their respective homogeneity structures are $(\delta, \id)$ and $(\id, \delta)$. Clearly, $\flip(E^v) = E^h$.
If $\varphi:E \rightarrow F$ is a vb-map, then it induces dvb-maps $\varphi^h: E^h \rightarrow F^h$ and $\varphi^v: E^v \rightarrow F^v$ in the obvious way.
Moreover, given a double vector bundle $(D,A,B,M)$, there are two canonical dvb-surmersion $\underline{\pi}^h : D \rightarrow A^h$ and $\underline{\pi}^v: D \rightarrow B^v$ respectively given by
$$
    \left[\vcenter{
    \xymatrix@R=0pc@C=0pc{
        D  & A \\ B  & M
    }}\right] 
    \xrightarrow{ \:
    \left[
    \begin{smallmatrix}
     \pi^h & \id \\ \pi & \id
    \end{smallmatrix}
    \right]
    \:
    }
     \left[\vcenter{
    \xymatrix@R=0pc@C=0pc{
        A  & A \\ M & M
    }}\right]
    \quad 
    \text{ and }
    \quad
    \left[\vcenter{
    \xymatrix@R=0pc@C=0pc{
        D  & A \\ B & M
    }}\right] 
    \xrightarrow{ \:
    \left[
    \begin{smallmatrix}
     \pi^v & \pi \\ \id & \id
    \end{smallmatrix}
    \right]
    \:
    }
     \left[\vcenter{
    \xymatrix@R=0pc@C=0pc{
        B & M  \\ B & M
    }}\right].
$$
\begin{remark}\label{rem.dvb-lift}
 For any double vector bundle $D$ such that $A = B$ (in particular, for $D= TTM$), we have $ \flip \circ \underline{\pi}^h = \underline{\pi}^v$. Moreover, if $\varphi: E \rightarrow F$ is a vb-map, then $\flip \circ \varphi^h \circ\flip = \varphi^v$. 
\end{remark}

%
%
\subsubsection{}
Let $(D,A,B,N)$ be a double vector bundle with homogeneity structure $(\delta^h,\delta^v)$. 
Given a vector bundle $E \rightarrow M$ with homogeneity structure $\delta$ and a vb-map $(\varphi,f): (E,M) \rightarrow (A, N)$, the fiber product $\varphi^*D = E \times_{A} D $ in $\Smooth$ sits as a submanifold of $E \times D$.
Now, notice that the cospan $(\varphi^h,\underline{\pi}^h): (E^h \rightarrow A^h \leftarrow D)$ is good, hence $\varphi^*D$ inherits a dvb-structure as a double vector sub-bundle of the direct product $E^h \times D$. 
The \textbf{horizontal side-pullback} of $D$ along $\varphi$ is the double vector bundle $\varphi^{*,h}D = (\varphi^*D, E, f^*B, M)$ where the dvb-structure is induced by the embedding
\begin{align}
\begin{split}
\left[
\vcenter{
    \xymatrix@C=0pc@R=0pc{
    \varphi^*D   & E 
    \\
    f^*B & M
    } }
\right]
\quad
\lhook\joinrel\xrightarrow{\quad}
\quad
    \left[
        \vcenter{
        \xymatrix@R=0pc@C=0pc{
            E & E \\ M & M
        } }
    \right]
    \: \times \: 
    \left[
        \vcenter{
        \xymatrix@R=0pc@C=0pc{
            D & A \\ B & N
        } }
    \right]
        \end{split}
\end{align}
given by 
$
\left[
\begin{smallmatrix}
           (e,d) & e \\ (m,b) & m
\end{smallmatrix} 
\right]
\mapsto
\left[
\begin{smallmatrix}
           e & e \\ m & m
\end{smallmatrix}
\right]
\times
\left[
\begin{smallmatrix}
           d & \varphi(e) \\ b & f(m)
\end{smallmatrix}
\right].
$
The homogeneity structure on $\varphi^{*}D$, denoted $\varphi^{*,h}(\delta^h, \delta^v)$, is obtained as the pullback of the product homogeneity structure $(\id \times \delta^h, \delta \times \delta^v)$.
%
%
\subsubsection{}
Analogously, let $(D,A,B,N)$ be a double vector bundle with homogeneity structure $(\delta^h,\delta^v)$, and let $(\psi,g): (E,M) \rightarrow (B, N)$ be a vb-map. The submanifold $\psi^*D$ of $E \times D$ inherits a dvb-structure as a double vector sub-bundle of the direct product $E^v \times D$. The \textbf{vertical side-pullback} of $D$ along $\psi$ is the double vector bundle $\psi^{*,v}D = (\psi^*D, g^*A, E,M)$ with dvb-structure induced by the embedding 
\begin{align}
\begin{split}
\left[
\vcenter{
    \xymatrix@C=0pc@R=0pc{
    \psi^*D  & g^*A
    \\
    E  & M
    } }
\right]
\quad
\lhook\joinrel\xrightarrow{\quad}
\quad
    \left[
        \vcenter{
        \xymatrix@R=0pc@C=0pc{
            E & M \\ E & M
        } }
    \right]
    \: \times \: 
    \left[
        \vcenter{
        \xymatrix@R=0pc@C=0pc{
            D & A \\ B  & N
        } }
    \right]
        \end{split}
\end{align}
given by $
\left[
\begin{smallmatrix}
           (e,d) & (m,a) \\ e & m
\end{smallmatrix} 
\right]
\mapsto
\left[
\begin{smallmatrix}
           e & m \\ e & m
\end{smallmatrix}
\right]
\times
\left[
\begin{smallmatrix}
           d & a \\ \psi(e) & g(m)
\end{smallmatrix}
\right].
$
The homogeneity structure $\psi^{*,v}(\delta^h, \delta^v)$ on $\psi^{*,v}D$ is the pullback of the product homogeneity structure $(\delta \times \delta^h, \id \times \delta^v)$. 
\begin{remark}
 Strictly speaking, the fiber product double vector bundle associated to the cospan $(\varphi^h,\underline{\pi}^h): (E^h \rightarrow A^h \leftarrow D)$ is a double vector bundle $(E \times_{\varphi,\pi^h} D, M \times_{\id, \pi} E, M \times_{f,\pi} B, M \times_{f,\id} N)$. It is naturally isomorphic to the horizontal side-pullback, as defined above, through the natural isomorphisms $M \times_{\id, \pi} E \cong E$ and $M \times_{f,\id} N \cong M$. In particular, the horizontal zero section $E \hookrightarrow \varphi^{*}D$ is given by the graph embedding of $\varphi$, whereas the vertical zero section $f^*B \hookrightarrow \varphi^{*}D$ is given by the fiber product vb-map $0_M^E \times_{0_N^A} 0_B^D$ of the corresponding zero-sections.
\end{remark}

\begin{example}\label{ex.sidepb-dbtg}
 Consider the double tangent bundle $(TTN,\delta^h, \delta^v)$ and let $f: M \rightarrow N$ be a smooth map. Then the side-pullbacks $(f_*)^{*,h} TTN$ and $(f_*)^{*,v} TTN$ are respectively identified to some double vector sub-bundles of the direct products $TM^h \times TTN$ and $TM^v \times TTN$. 
 In particular, the vertical side-pullback $(f_*)^{*,v}TTN$ is naturally isomorphic to $T(f^* TN)$ (with the double vector bundle structure of example~\ref{ex.dvb-TE}). Indeed, since $\pi^v = \pi_*$ and the cospan $(f,\pi):(M \rightarrow N \leftarrow TN)$ is good, there is a natural vb-isomorphism (as vector bundle over $f^*TM$)
\begin{align}\label{eq.vert-id}
 T(f^*TN)  
 \cong (f_*)^{*,v}TTN
\end{align}
which is compatible with the vertical vb-structure over $TM$.
As a conventional notation, we will make the following abuse (when the context allows it),
\begin{align}\label{eq.convention}
\begin{array}{rcl}
    (f_*)^* TTN &:=& (f_*)^{*,h}TTN.
\end{array}
\end{align}
That being said, the total flip $TM^h \flipar TM^v$ induces a partial flip $TM^h \times TTN \flipar TM^v \times TTN$, whose restriction together with the natural isomorphism~(\ref{eq.vert-id})  yields the flip isomorphism, thanks to remark~\ref{rem.dvb-lift}:
\begin{align}\label{Phlip}
     \Phi= \Phi_f: (f_*)^*TTN \flipar T(f^*TN).
\end{align}
Its inverse will be denoted with an upper index, $\Phi^f = \Phi_f^{-1}$.
\end{example}
%
\subsubsection{Side-pullbacks of dvb-maps by 2-vb-maps.}
Let $(\varphi,\alpha,\beta,f)$ be a dvb-map $(D_1, A_1, B_1, M_1) \rightarrow (D_2, A_2, B_2, M_2)$ . Given a 2-vb-map
\begin{align*}
 \begin{split}
 \left.
    \vcenter{
    \xymatrix@R=1.5pc@C=1.5pc{
        E_1 \ar[r]^-{\psi_1} \ar[d]_-\gamma 
        \ar@{}[rd]|-{\overset{{\Psi}}{\rotatebox[origin=c]{0}{$\Rightarrow$}}}
        & A_1 \ar[d]^-\alpha
        \\
        E_2 \ar[r]_-{\psi_2} & A_2
    }}
    \right.
    \qquad
     \text{with base 2-map}
    \qquad
    \left.
    \vcenter{
    \xymatrix@R=1.5pc@C=1.5pc{
        N_1 \ar[r]^-{h_1} \ar[d]_-g 
        \ar@{}[rd]|-{\overset{{H}}{\rotatebox[origin=c]{0}{$\Rightarrow$}}}
        & M_1 \ar[d]^-f
        \\
        N_2 \ar[r]_-{h_2} & M_2
    }}
    \right.
 \end{split}
\end{align*}
 we define the \textbf{horizontal side-pullback} $\Psi^{*,h} \varphi: \psi_1^{*,h}D_1 \rightarrow \psi_2^{*,h}D_2$ of $\varphi$ along $\Psi$ to be the dvb-map
\begin{align*}
 \begin{split}
    \left[
        \vcenter{
        \xymatrix@R=0pc@C=0pc{
            \psi_1^*D_1 & E_1 \\ h_1^*B_1 & N_1
        }}    
        \right]
        \xrightarrow{ \quad 
        \left[\begin{smallmatrix}
         \Psi^*\!\varphi \,& \gamma 
        \\ H^*\!\beta \,& g 
        \end{smallmatrix}\right]
        \quad
        }
        \left[
        \vcenter{
        \xymatrix@R=0pc@C=0pc{
            \psi_2^*D_2
            & E_2  
            \\ h_2^*B_2 
            & N_2
        } }
        \right]
 \end{split}
\end{align*}
where $\Psi^*\varphi$ (here $\varphi$ covers $\alpha$) and $H^*\beta$ are pullbacks of vb-maps along 2-maps (cf.~\S\ref{par.pb-vbmap}). 
In particular, the horizontal side-pullback $\Psi^{*,h}\varphi$ identifies in a natural way to the restriction of the product dvb-map 
\begin{align*}
 \begin{split}
    \left[
        \vcenter{
        \xymatrix@R=0pc@C=0pc{
            E_1 & E_1  
            \\ N_1 & N_1
        }}    
    \right]
    \times
    \left[
        \vcenter{
        \xymatrix@R=0pc@C=0pc{
            D_1 & A_1 
            \\ B_1 & M_1
        }}    
    \right]
    \xrightarrow{ \quad 
    \left[\begin{smallmatrix}
         \gamma & \gamma 
        \\ g & g 
        \end{smallmatrix}
    \right]
    \times 
    \left[\begin{smallmatrix}
         \varphi \,& \alpha 
        \\ \beta \,& f 
        \end{smallmatrix}
    \right]
        \quad
        }
    \left[
        \vcenter{
        \xymatrix@R=0pc@C=0pc{
            E_2  & E_2 
            \\ N_2  & N_2
        }}    
    \right]
    \times
    \left[
        \vcenter{
        \xymatrix@R=0pc@C=0pc{
            D_2
            & A_2 
            \\ B_2  
            & M_2
        } }
    \right].
 \end{split}
\end{align*}
\begin{remark}\label{rem.natural}
 Reformulating the last sentence, $\Psi^{*,h}\varphi$ is, up to natural isomorphism of the source and the range,    given by the fiber product dvb-map $\gamma^h \times_{ \alpha^h} \varphi$ associated to the cospan $(\Psi^h, \underline{\smash \pi}^h): \gamma^h \Rightarrow \alpha^h \Leftarrow \varphi$ where $\Psi^h$ is the obvious lift of $\Psi$ as a 2-dvb-map. 
We write it as $\Psi^{*,h}\varphi \cong \gamma^h \times_{ \alpha^h} \varphi$ in order to distinguish it from a strict equality.
\end{remark}
The vertical side-pullback of dvb-maps is defined along the same lines. 
\begin{example}
 Let $H= (h_2,h_1): (N_1 \xrightarrow{g} N_2) \Rightarrow (M_1 \xrightarrow{f} M_2)$ be a 2-map in $\Smooth$. 
 Applying the tangent functor returns $H_* = (h_{2*}, h_{1*})$ as a 2-vb-map $g_* \Rightarrow f_*$. 
 On one side, the horizontal side-pullback $(H_*)^{*,h}f_{**}$ is the dvb-map 
 \begin{align*}
 \begin{split}
    \left[
        \vcenter{
        \xymatrix@R=0pc@C=0pc{
            (h_{1*})^*TTM_1 & TN_1 
            \\ h_1^*TM_1  & N_1
        }}    
        \right]
        \xrightarrow{ \quad 
        \left[\begin{smallmatrix}
            (H_*)^*\!f_{**} \,& g_* 
            \\ H^*\!f_* \,& g 
        \end{smallmatrix}\right]
        \quad
        }
        \left[
        \vcenter{
        \xymatrix@R=0pc@C=0pc{
            (h_{2*})^*TTM_2
            & TN_2 
            \\ h_2^*TM_2 
            & N_2
        } }
        \right]
 \end{split}
\end{align*}
On the other side, the vertical side-pullback $(H_*)^{*,v}f_{**}$ naturally identifies with the dvb-map 
 \begin{align*}
 \begin{split}
    \left[
        \vcenter{
        \xymatrix@R=0pc@C=0pc{
            T(h_1^*TM_1)  & h_1^*TM_1 
            \\  TN_1 & N_1
        }}    
        \right]
        \xrightarrow{ \quad 
        \left[\begin{smallmatrix}
            (H^*f_*)_*\,&  H^*\!f_*
            \\ g_* \,& g 
        \end{smallmatrix}\right]
        \quad
        }
        \left[
        \vcenter{
        \xymatrix@R=0pc@C=0pc{
            T(h_2^*TM_2)
            & h_2^*TM_2 
            \\ TN_2 
            & N_2
        } }
        \right]
 \end{split}
\end{align*}
through $(h_1)^{*,v}TTM_1 \cong T(h_1^*TM_1)$ and $(h_2)^{*,v}TTM_2 \cong T(h_2^*TM_2)$.
\end{example}
\begin{remark}
 In the spirit of the notation~(\ref{eq.convention}), by writting $(H_*)^*f_{**}$ we always assume that the side-pullback is in the horizontal direction, unless explicitly specified by a superscript ${\_}^{*,v}$. On the other side, $(H^*f_*)_*$ is understood as the vertical side-pullback under the natural identification $H^{*,v}f_{**} \cong (H^*f_*)_*$.
\end{remark}

\subsubsection{Side-sharpening of dvb-maps.}
Consider a dvb-map $(\varphi,\alpha,\beta,f): (D_1, A_1, B_1, M_1) \rightarrow (D_2, A_2, B_2, M_2)$. Then, one defines the \textbf{horizontal sharpening} of $\varphi$ as the dvb-map $\varphi_{\dag}^h: D_1 \rightarrow \alpha^{*,h}D_2$ given by
\begin{align*}
 \begin{split}
    \left[
        \vcenter{
        \xymatrix@R=0pc@C=0pc{
            D_1 & A_1 \\ B_1 & M_1
        }}    
        \right]
        \xrightarrow{ \quad 
        \left[\begin{smallmatrix}
         \varphi_\dag^h & \id 
        \\ \beta_\dag & \id 
        \end{smallmatrix}\right]
        \quad
        }
        \left[
        \vcenter{
        \xymatrix@R=0pc@C=0pc{
            \alpha^*D_2 & A_1  
            \\ f^*B_2 & M_1
        } }
        \right]
 \end{split}
\end{align*}
  where $\varphi_\dag^h = \dtimes{\varphi}{\pi^h}$ and $\beta_\dag= \dtimes{\beta}{\pi}$ are the usual sharpening as vb-maps over $\alpha$ and $f$, respectively. In particular, if $\varphi$ has constant rank, then $\alpha^{*,h}\varphi$ has constant rank as well.
One easily check that ${\varphi}_\dag^h$ is compatible with the homogeneity structure using the fact that it is essentially the restriction of a product dvb-map.
%
%
%
\begin{remark}
 In the same manner, one defines the vertical sharpening $\varphi_\dag^v: D_1 \rightarrow \beta^{*,v}D_2$ by applying the vertical side-pullback instead of the horizontal one.
\end{remark}
\begin{remark}\label{rem.sharp-as-pb}
 The dvb-map $\varphi^h_\dag$ is essentially an instance of a horizontal side-pullback of a dvb-map by a 2-vb-map: precisely, let $\underline{\smash\alpha}:  \id_{A_1} \Rightarrow \alpha$ be the range lift of $\alpha$, given by the pair $\underline{\smash\alpha}=(\alpha, \id_{A_1})$. Then $\varphi^h_\dag \cong \underline{\smash\alpha}^{*,h} \varphi$. According to remark~\ref{rem.natural}, $\varphi^h_\dag$ also identifies with the fiber product of the good cospan 
 $(\underline{\smash\alpha},\underline{\smash\pi}^h): 
 ( \id_{A_1} \Rightarrow \alpha \Leftarrow \varphi).
 $
 Similarly, for the vertical sharpening $\varphi^v_\dag \cong \underline{\smash\beta}^{*,v}\varphi$.
\end{remark}
%
%
%
\begin{example}\label{ex.sharp-dvb-diff}
 Let $f: M \rightarrow N$ be a smooth map and $f_{**} : TTM \rightarrow TTN$ be its double differential. Recall that the range lift $\underline{f_*}$ is the 2-vb-map $\id_{T M} \Rightarrow f_*$ given by the pair $(f_*,\id_{TM})$.
 We define the \textbf{horizontally-sharp double differential} $f_{*\sharp}$ to be the dvb-map:
 \begin{align*}
 (\underline{f_*})^{*,h}f_{**} : \quad 
 \begin{split}
    \left[
        \vcenter{
        \xymatrix@R=0pc@C=0pc{
            TTM  & TM  \\ TM & M
        }}    
        \right]
    \xrightarrow{ \quad 
        \left[\begin{smallmatrix}
         f_{*\sharp} & \id 
        \\ f_\sharp & \id 
        \end{smallmatrix}\right]
        \quad
        }
        \left[
        \vcenter{
        \xymatrix@R=0pc@C=0pc{
            (f_*)^*TTN 
            & TM  
            \\ f^*TN 
            & M
        } }
        \right]
 \end{split}
\end{align*}
Regarding the vertical counterpart, the \textbf{vertically-sharp double differential} $(f_*)_{\sharp,v}= (\underline{f_*})^{*,v}f_{**}$ is naturally identified with the dvb-map $f_{\sharp *}$ defined as
\begin{align*}
(\underline{ f}^*f_*)_*: \quad 
 \begin{split}
    \left[
        \vcenter{
        \xymatrix@R=0pc@C=0pc{
            TTM  & TM \\ TM  & M
        }}    
        \right]
    \xrightarrow{ \quad 
        \left[\begin{smallmatrix}
         f_{\sharp *} & f_\sharp 
        \\ \id & \id 
        \end{smallmatrix}\right]
        \quad
        }
        \left[
        \vcenter{
        \xymatrix@R=0pc@C=0pc{
            T(f^*TN)
            & f^*TN
            \\  TM 
            & M
        } }
        \right]
 \end{split}
\end{align*}
\end{example}

\begin{prop}\label{prop.Phlip-map}
 $\Phi \circ f_{* \sharp} = f_{\sharp *}$ using the notations above and from~(\ref{Phlip}). 
\end{prop}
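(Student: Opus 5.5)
Both sides are dvb-maps $TTM \to T(f^*TN)$ built from the same raw map $f_{**}$, so the plan is to reduce the identity to equality of underlying smooth maps of total spaces. A dvb-map is determined by its underlying smooth map, once we know the homogeneity structures agree, and $\Phi$ is by Definition~\ref{def.flip-map} a flip isomorphism. So it suffices to show that the composite $\Phi\circ f_{*\sharp}$ and $f_{\sharp *}$ agree as maps $TTM \to T(f^*TN)$. We then check separately that the homogeneity structures match up to the flip. By construction, $\Phi$ exchanges the horizontal and vertical actions on the $TM$-factor, so $\Phi\circ f_{*\sharp}$ is a flip map. The statement should therefore be read in $\DVB_f$.

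\textbf{Step 1: embed everything in ambient products.} By Remark~\ref{rem.sharp-as-pb} and Remark~\ref{rem.natural}, $f_{*\sharp} = (\underline{f_*})^{*,h}f_{**}$ is the restriction of the product map $\id_{TM}\times f_{**}$. Explicitly,
\[
\xi \mapsto \bigl(\pi^h(\xi), f_{**}(\xi)\bigr) \in TM^h\times TTN ,
\]
with image in $(f_*)^{*,h}TTN$. Likewise, $(\underline{f_*})^{*,v}f_{**}$ is $\xi\mapsto(\pi^v(\xi), f_{**}(\xi))\in TM^v\times TTN$, with image in $(f_*)^{*,v}TTN$. For $TTM$ one has $\pi^v=(\pi_{TM})_*$, while $\pi^h$ is the bundle projection $TTM\to TM$.

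\textbf{Step 2: identify $\Phi$ in the same ambient picture.} By~(\ref{Phlip}), $\Phi$ is the restriction of the partial flip $TM^h\times TTN \flipar TM^v\times TTN$, composed with the natural isomorphism~(\ref{eq.vert-id}). On underlying sets the partial flip is the identity $(v,\eta)\mapsto(v,\eta)$. The real content is therefore that the sub-bundle $(f_*)^{*,h}TTN=\{(v,\eta): f_*v=\pi^h\eta\}$ is carried onto $(f_*)^{*,v}TTN=\{(v,\eta): f_*v=\pi^v\eta\}$. This is \emph{not} a literal equality of subsets, so the canonical involution of $TTN$ must be involved. Following Remark~\ref{rem.dvb-lift}, I expect the flip on the $TTN$-factor to be the canonical flip $\kappa_N$, with $\flip\circ\underline\pi^h=\underline\pi^v$. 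Then
\[
\Phi(v,\eta)=(v,\kappa_N\eta)\quad\text{followed by }(\ref{eq.vert-id}).
\]

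\textbf{Step 3: compute the composite.} Combining the two steps,
\[
\Phi\circ f_{*\sharp}(\xi)=\bigl(\pi^h\xi,\ \kappa_N f_{**}\xi\bigr).
\]
Using the classical naturality $\kappa_N\circ f_{**}=f_{**}\circ\kappa_M$ together with $\pi^h\circ\kappa_M=\pi^v$, this becomes $(\pi^v(\kappa_M\xi), f_{**}(\kappa_M\xi))$. That is $(\underline{f_*})^{*,v}f_{**}$ evaluated at $\kappa_M\xi$, so a matching flip $\kappa_M$ on the source must be absorbed into the identification of the source $TTM$. Next, (\ref{eq.vert-id}) sends $(v,\eta)$ to the tangent vector of the curve $t\mapsto(c(t),\gamma(t))$ in $f^*TN$. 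Under this identification $(\underline{f_*})^{*,v}f_{**}$ is exactly $(\underline f^*f_*)_* = f_{\sharp*}$, the differential of $f_\sharp=\dtimes{\pi}{f_*}$, as already asserted in Example~\ref{ex.sharp-dvb-diff}.

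\textbf{Main obstacle.} The delicate point is bookkeeping the flips. One must decide whether the paper's $\Phi$ involves $\kappa_N$ on the $TTN$-factor, and correspondingly whether the source $TTM$ of $f_{*\sharp}$ and of $f_{\sharp*}$ carries swapped dvb-structures. If the convention is that $\Phi$ is literally the identity on points, then the claim reduces to the set-theoretic check that $\xi\mapsto(\pi^h\xi, f_{**}\xi)$ and $\xi\mapsto(\pi^v\xi, f_{**}\xi)$ correspond under~(\ref{eq.vert-id}). In that case I would check it in local coordinates $(x,\dot x,\delta x,\delta\dot x)$: both sides equal
\[
\bigl(x,\dot x;\ f(x), Df(x)\delta x,\ Df(x)\dot x,\ D^2f(x)(\dot x,\delta x)+Df(x)\delta\dot x\bigr).
\]
The symmetry of $D^2f$ is exactly what makes the flip harmless. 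I would carry out this coordinate check first, to pin down the convention, and then phrase it invariantly as above.
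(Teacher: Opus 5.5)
Your Steps 1--3 are correct, and they take a more concrete route than the paper. The paper argues formally inside its flip calculus. It writes $f_{*\sharp}$ as the corestriction of $\lceil \underline{\pi}^h, f_{**}\rceil\colon TTM\to TM^h\times TTN$ and declares $\Phi$ to be induced by the partial flip $\flip\times\id$. It then invokes Remark~\ref{rem.dvb-lift} ($\flip\circ\underline{\pi}^h=\underline{\pi}^v$) to get $(\flip\times\id)\circ\lceil\underline{\pi}^h,f_{**}\rceil=\lceil\underline{\pi}^v,f_{**}\rceil$, and corestricts this to the fibre products.

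You compute on points instead, and this exposes what the formal argument glosses over. Since $\flip$ is the identity on total spaces, $\flip\times\id$ does not carry $\{f_*u=\pi^h\eta\}$ onto $\{f_*u=\pi^v\eta\}$. Likewise, Remark~\ref{rem.dvb-lift} read on points would assert $\pi^h=\pi^v$ on $TTM$. Your choice $\Phi=\id\times\kappa_N$ (restricted, then composed with~(\ref{eq.vert-id})) is the natural reading of Example~\ref{Phi-as-pb}. The ``identity flip'' of $TTN$ there must carry the horizontal side $TN\subset TTN$ onto the vertical side $TN\subset TTN$; the identity map does not do this, and the canonical $\kappa_N$ does. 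With this $\Phi$, naturality $\kappa_N f_{**}=f_{**}\kappa_M$ and $\pi^v\kappa_M=\pi^h$ give $\Phi\circ f_{*\sharp}=f_{\sharp *}\circ\kappa_M$.

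This twisted form is exactly what the paper uses later. In Proposition~\ref{prop-TN=NT}, the left vertical ``identity flip'' of $TTM$ must be $\kappa_M$ for the left square to commute. The paper's route is shorter and stays inside the ambient-flip formalism its quotient-flip arguments rely on; yours actually verifies the identity and pins down the twist. (Minor slip: $\pi^v=(\pi_M)_*$, not $(\pi_{TM})_*$.)

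You should delete the fallback in your last paragraph; there is no convention left to decide. If $\Phi$ were the identity on points, it would not land in $(f_*)^{*,v}TTN$, as you noted yourself in Step 2. In fact no map satisfies $\Phi\circ f_{*\sharp}=f_{\sharp*}$ on the nose. Take $N$ a point: then $f_{*\sharp}=\pi^h$ and $f_{\sharp*}=\pi^v$ as maps $TTM\to TM$, and $\pi^v$ is not constant on the fibres of $\pi^h$ once $\dim M\geq 1$.

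Your coordinate display does not show otherwise. The maps $\xi\mapsto(\pi^h\xi,f_{**}\xi)$ and $\xi\mapsto(\pi^v\xi,f_{**}\xi)$ have different first components, so they cannot both equal the display. Take $\pi^h\xi=(x,\dot x)$ and $\pi^v\xi=(x,\delta x)$. Your display has $Df(x)\delta x$ and $Df(x)\dot x$ in swapped positions, so it is $(\pi^h\xi,\kappa_N f_{**}\xi)=f_{\sharp*}(\kappa_M\xi)$; it just re-verifies the twisted identity. Commit to the $\kappa$ reading and state the result as $\Phi\circ f_{*\sharp}=f_{\sharp*}\circ\kappa_M$, i.e.\ an equality in $\DVB_f$ once the source $TTM$ is flipped by $\kappa_M$. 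Adjust your opening reduction (comparing $\Phi\circ f_{*\sharp}$ and $f_{\sharp*}$ directly as maps $TTM\to T(f^*TN)$) accordingly.
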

\begin{proof}
Let us use the generic notation $\underline{\smash\pi}^h,\underline{\smash \pi}^v$ for the dvb-lifts of the corresponding horizontal and vertical projections, respectively.
From remark~\ref{rem.sharp-as-pb}, the horizontal sharpening
 $
 f_{* \sharp} = (\underline{f_*})^{*,h} f_{**} \cong \underline{\pi}^h\,{}_{\underline{f_*}}\!\!\times_{\underline{\pi}^h} f_{**} 
 $
 is essentially the corestriction of the dvb-map $\dtimes{\underline{\pi}^h}{f_{**}}: TTM\rightarrow TM^h \times TTN$ over the fiber product $TM \,{}_{f_*}\!\!\times_{\pi^h} TTN$. Now notice that, since $\Phi$ is induced by $\flip \times \id : TM^h \times TTN \rightarrow TM^v \times TTN$, the identity of dvb-maps $(\flip \times \id) \circ \dtimes{\underline{\smash\pi}^h}{f_{**}} = \dtimes{\underline{\smash\pi}^v}{f_{**}}$ corestricts to 
 $
 \Phi \circ (\underline{\pi}^h\,{}_{\underline{f_*}}\!\!\times_{\underline{\pi}^h} f_{**}) = \underline{\pi}^v\,{}_{\underline{f_*}}\!\!\times_{\underline{\pi}^v} f_{**}
 $
 over $TM \,{}_{f_*}\!\!\times_{\pi^v} TTN$.
\end{proof}
%
%
%
\subsubsection{Flip isomomorphism.}
%
\begin{lem}[Flip lemma]\label{lem.flip}
 Let $(D,A,B,M)$ be a double vector bundle and $(\varphi,f):(E,N) \rightarrow (A,M)$ be a vb-map. Then, there is a canonical dvb-isomorphism
 $
 \flip( \varphi^{*,h}D) \cong \varphi^{*,v}\flip(D).
 $
\end{lem}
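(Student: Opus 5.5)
The plan is to show that both sides are the \emph{same} submanifold $\varphi^*D = E \times_{\varphi,\pi^h} D$ of $E \times D$, and that they carry the same pair of homogeneity actions. The isomorphism will then be the identity on underlying manifolds, which is why it is canonical.

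First I unwind the definitions. By the construction of the horizontal side-pullback, $\varphi^{*,h}D$ is the submanifold $\{(e,d) : \varphi(e) = \pi^h(d)\}$ of $E^h \times D$. Its horizontal action is the restriction of $\id \times \delta^h$ and its vertical action is the restriction of $\delta \times \delta^v$. Taking the total flip only swaps the two actions. So $\flip(\varphi^{*,h}D)$ is the same manifold with horizontal action $(\delta\times\delta^v)\vert$ and vertical action $(\id\times\delta^h)\vert$.

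On the other side, $\flip(D) = (D,\delta^v,\delta^h)$ has vertical side bundle $A$, since $\pi^v_{\flip(D)} = \delta^h_0 = \pi^h$. Hence the vertical side-pullback $\varphi^{*,v}\flip(D)$ along $(\varphi,f):(E,N)\to(A,M)$ is well defined. As a manifold it is $\{(e,d) : \varphi(e) = \pi^h(d)\}$, the same as before. It sits inside $E^v \times \flip(D)$, and its actions are the restrictions of $(\delta \times \delta^v,\ \id \times \delta^h)$. These are exactly the restrictions of the ambient product actions, read off from the vertical side-pullback recipe with the roles of $\delta^h$ and $\delta^v$ exchanged.

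Comparing the two, the underlying manifolds coincide and the actions agree. Equivalently, the ambient identity $\flip(E^h \times D) = \flip(E^h)\times\flip(D) = E^v \times \flip(D)$, which uses $\flip(E^h) = E^v$ from \S\ref{par.dvb-lift}, restricts to the submanifold $\varphi^*D$.

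It remains to check the side bundles. For $\flip(\varphi^{*,h}D)$ these are $(f^*B, E)$. For $\varphi^{*,v}\flip(D)$ they are $(f^*(\text{horizontal side of }\flip(D)), E) = (f^*B, E)$. Both are embedded in the same way: $E$ via the graph of $\varphi$, and $f^*B$ via the fiber product of zero sections as in the remark following the side-pullback definitions.

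The only delicate point is bookkeeping. One has to confirm that the flip of the restricted structure equals the restriction of the flipped ambient structure; this holds because both operations act on the same subset and only relabel the actions. One must also check that the cospan $(\varphi^v,\underline{\pi}^v)$ for $\flip(D)$ is good. That follows from the goodness of $(\varphi^h,\underline{\pi}^h)$ by flipping, since $\underline{\pi}^v_{\flip(D)} = \flip\circ\underline{\pi}^h_D$, in the spirit of remark~\ref{rem.dvb-lift}. Regularity needs no new argument, because it was already verified in Proposition~\ref{prop.dvb-fp}.
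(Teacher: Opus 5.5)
Your proof is correct and follows the same route as the paper. Both arguments observe that the two sides are the same submanifold $\varphi^*D\subseteq E\times D$, and that $\flip(E^h\times D)=\flip(E^h)\times\flip(D)=E^v\times\flip(D)$, so the restricted homogeneity structures $(\delta\times\delta^v,\ \id\times\delta^h)$ agree. Your extra checks of the side bundles, of the goodness of the flipped cospan, and of regularity are bookkeeping that the paper leaves implicit.
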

\begin{proof}
 Let $\delta$ and $(\delta^h,\delta^v)$ be the homogeneity structure on $E$ and $D$ respectively.
 As manifolds, $\varphi^{*,h}D = E \times_{\pi, \pi^h} D$ is sent to $\varphi^{*,v}\flip(D)= E \times_{\pi, \pi^v} D$ under the total flip $E^h \times D \cong \flip(E^h \times D)$.
 Now, $ \flip(E^h \times D) =  \flip(E^h) \times \flip(D) =  E^v \times \flip(D)$ has homogeneity structure $(\delta \times \delta^v, \id \times \delta^h)$, thus matching the homogeneity structure on the vertical pullback of $\flip(D)$. 
\end{proof}
\begin{deff}\label{def.flipmap}
 Let $\varphi: D_1 \rightarrow D_2$ be a dvb-map of the form 
        $\left[
        \begin{smallmatrix}
         \varphi& \alpha 
        \\ \beta & f
        \end{smallmatrix}
        \right]
        $.
        Then the \textbf{flip of $\varphi$} is the dvb-map
        $
        \flip(\varphi): \flip(D_1) \rightarrow \flip(D_2)
        $ 
        given by 
        $$
        \flip(\varphi) = 
        \flip_2 \circ \left[
        \begin{matrix}
        \varphi  & \beta
        \\ \alpha & f 
        \end{matrix}
        \right] \circ \flip_1.
        $$
        We also consider the variant $\rflip(\varphi) = \flip_2 \circ \varphi$ and $\sflip(\varphi) = \varphi \circ \flip_1$.
\end{deff}
\begin{example}
  Consider the double tangent bundle $TTM$ and let $\pi^h,\, \pi^v$ be its horizontal and vertical projection, respectively. 
  Recall from $\S \ref{par.dvb-lift}$ that $\pi^h$ lift to a dvb-map 
  $\underline{\pi}^h : TTM \rightarrow TM^h$ given by 
  $\left[\begin{smallmatrix}
            \pi^h & \id 
            \\ \pi & \id 
    \end{smallmatrix}\right]$.
    Then 
    $ \flip(\underline{\pi}^h) : \flip(TTM) \rightarrow TM^v$
    is given by
    $\left[\begin{smallmatrix}
            \pi^h & \pi 
            \\  \id& \id 
    \end{smallmatrix}\right]$.
    In the same way, let $\underline{\pi}^v$ be the dvb-map $TTM \rightarrow TM^v$ given by 
    $\left[\begin{smallmatrix}
            \pi^v & \pi 
            \\  \id & \id 
    \end{smallmatrix}\right]$, 
    then 
    $\flip(\underline{\pi}^v) = 
        \left[\begin{smallmatrix}
        \pi^v &  \id
        \\  \pi & \id 
    \end{smallmatrix}\right]$ 
    defines a dvb-map $\flip(TTM) \rightarrow TM^h$.
\end{example}
\begin{deff}\label{def.flip-pb}
Let $\Upsilon: (D_1, F, B_1, M) \flipar (D_2, A_2, F, M)$ be a flip map, and $\psi: (E,N) \rightarrow (F,M)$ be a vb-map. 
Then, the \textbf{pullback} of $\Upsilon$ is the flip map
$$
  \psi^{*, \htov}\Upsilon: \psi^{*,h} D_1 \longflipar \psi^{*,v}D_2
$$
obtained as the restriction of $\flip \times \Upsilon: E^h \times D_1 \flipar E^v \times D_2$. In the same manner, one defines a pullback flip map from the horizontal pullback to the vertical pullback.
\end{deff}
\begin{example}\label{Phi-as-pb}
Consider the identity flip $\id: TTN \flipar TTN$. Then, the flip isomorphism $\Phi$ from~(\ref{Phlip}) is essentially  the pullback flip $$f^{*, \htov} \id: (f_*)^{*,h} TTN \flipar (f_*)^{*,v} TTN,$$ under the natural isomorphism $(f_*)^{*,v} TTN \cong T(f^*TN)$.
\end{example}

%
%
%
\subsection{Quotients as cokernel}
%
%
%
\subsubsection{Quotient by wide double vector bundle.}
In this work, the quotients of double vector bundle by (wide) double vector sub-bundle will be treated as quotients of some kind of internal objects in the category of vector bundles. 
Roughly speaking, the idea is to consider a subcategory of $\DVB$ which behaves like $\VB$, in which a suitable class of morphisms have well-defined kernels and cokernels. 
To do so, simply fix a vector bundle $E$, and consider all the double vector bundles having $E$ as, say horizontal, side-bundle. 
We are opting for a formal definition in terms of homogeneity structure:
\begin{deff}\label{def.vb-object}
    Let $(E,\delta,M)$ be a smooth vector bundle.
    A \textbf{vb-object} over $E\! \rightarrow\! M$ (or ``vb-vector bundle'') consists in a tuple $(D, \kappa, j,\tau)$ where:
    \begin{itemize}
    \setlength\itemsep{0pc}
     \item[(i)] $(D,\kappa)$ is a smooth vector bundle with base $B:= \kappa(0,D)$ and homogeneity action $\kappa: \R \times D \rightarrow D$,
     \item[(ii)] $j: (E,\delta, M) \hookrightarrow (D, \kappa, B)$  is a vb-embedding (in particular, $j^*\kappa =\delta$), 
     \item[(iii)] $\tau: \R \times D \rightarrow D$ is a regular $(\R,\cdot)$-action, called \textbf{structural homogeneity structure}, such that $\tau$ acts by vector bundle endomorphisms (or equivalently, the actions $\kappa$ and $\tau$ commute, $\kappa_\lambda \circ \tau_\mu = \tau_\mu \circ \kappa_\lambda$)
     \end{itemize}
     satisfying the following compatibility conditions:  
     \begin{itemize}
     \setlength\itemsep{0pc}
     \item[(v)] $j^*\tau = \id_E$ (the image of $j$ is fixed by $\tau$), and
     \item[(iv)] $\tau(\lambda, d) = d$ implies $d \in j(E)$, and $\tau(\lambda,b)= b$ implies $b \in j(M)$ (the $\tau$-fixed point are in the image of $j$).
    \end{itemize}
    Such vb-object will be denoted $[D,B] \rightarrow [E,M]$.
\end{deff}
\begin{remark}
    The condition (iii) in the definition above implies that the $\tau$-fixed point define a smooth vector sub-bundle $(\tau_0(D), \kappa\vert_{\tau_0(D)}, \tau_0(B))$ of $(D, \kappa, B)$,   
    The condition $(v)$ and $(iv)$ guarantee that this latter sub-bundle is exactly  the image of the vb-embedding $j$.
\end{remark}
\begin{deff}
    A \textbf{morphism of vb-object} over $E \rightarrow M$
    $$
    \varphi: (D_1,\kappa_1, j_1, \tau_1) \rightarrow (D_2,\kappa_2, j_2, \tau_2)
    $$
    is a vb-map $\varphi: (D_1, \kappa_1) \rightarrow (D_2, \kappa_2)$ compatible with the vb-object structure in the sense that:
        $\tau_2 \circ (\id \times \varphi) = \varphi \circ \tau_1$ and $\varphi \circ j_1 = j_2$.
    The vb-object over $E\rightarrow M$ together with the morphisms of vb-object form a category denoted $\VB(E\!\rightarrow\! M)$ with zero object $[E, M] \rightarrow [E,M]$.
    An \textbf{embedding of vb-object} over $E \rightarrow M$ is a morphism of vb-object such that $\varphi: (D_1,\kappa_1) \rightarrow (D_2,\kappa_2)$ is a vb-embedding. 
    The notion of isomorphism is defined similarly.
\end{deff}
\begin{prop}
 Let $(D,\kappa,j,\tau)$ be a vb-object over $\pi:E\rightarrow M$. Then $D$ admits a natural double vector bundle structure (determined up to flip).
\end{prop}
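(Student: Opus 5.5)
The plan is to use the Grabowski--Rotkiewicz characterization recalled at the beginning of Section~\ref{sec.dvb}: a double vector bundle is the same as a manifold carrying two commuting regular $(\R,\cdot)$-actions. For a vb-object $(D,\kappa,j,\tau)$ over $E\rightarrow M$, take the candidate structure to be $(D,\kappa,\tau)$, or its flip $(D,\tau,\kappa)$; this is the ``up to flip'' ambiguity in the statement.

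Two of the three requirements hold directly by Definition~\ref{def.vb-object}. Regularity of $\kappa$ is condition (i), since $\kappa$ is a homogeneity structure. Regularity of $\tau$ is part of condition (iii). Commutation $\kappa_\lambda\circ\tau_\mu=\tau_\mu\circ\kappa_\lambda$ is the second half of (iii). So $(D,\kappa,\tau)$ is a double vector bundle in the sense of \cite{grabowski-rotkiewicz2009}, with the standing convention that $\tau_0$ has constant rank so that its image is an embedded submanifold.

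The remaining work is to identify the side bundles and check that $E\rightarrow M$ sits as the side bundle of $D$ for the action $\tau$.
\begin{itemize}
\item The $\kappa$-side is $B=\kappa_0(D)$, by (i).
\item The $\tau$-side is $\tau_0(D)$. Every point of it is $\tau$-fixed, because $\tau_\lambda\tau_0=\tau_0$. By (iv) it is therefore contained in $j(E)$. Conversely, (v) says $j^*\tau=\id_E$, so $\tau_0(j(e))=j(e)$, and hence $\tau_0(D)=j(E)$.
\item The core base is $\kappa_0\tau_0(D)$. Since $j$ is a vb-map, $\kappa_0\circ j=j\circ\delta_0$, so $\kappa_0(j(E))=j(M)$. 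Combining with the second clause of (iv) and with (v) gives $\tau_0(B)=j(M)$.
\end{itemize}
This yields the square $\left[\begin{smallmatrix} D & E\\ B & M\end{smallmatrix}\right]$ after identifying $j(E)\cong E$ and $j(M)\cong M$ through the embedding $j$. By (ii), $j^*\kappa=\delta$, so the restricted action $\kappa\vert_{j(E)}$ is exactly the vector bundle structure of $E\rightarrow M$. The remaining side $B\rightarrow M$ carries the restriction of $\tau$; this is a regular action, being the restriction of the regular action $\tau$ to the $\kappa$-fixed submanifold $B$, preserved because the actions commute.

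Naturality should come almost for free. A morphism of vb-objects intertwines $\kappa$ (it is a vb-map) and intertwines $\tau$ (by definition), so it is a dvb-map whose $E$-side map is $\id_E$ (from $\varphi\circ j_1=j_2$). This makes the assignment a functor $\VB(E\!\rightarrow\! M)\rightarrow\DVB$. The only choice made is which action is called horizontal, which is the flip.

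The main obstacle I expect is the identification of the fixed-point loci with $j(E)$ and $j(M)$ as \emph{embedded} submanifolds with the correct vector bundle structures. The clean way to handle it is to quote the remark after Definition~\ref{def.vb-object}, that $\tau_0(D)$ is a smooth sub-bundle of $(D,\kappa,B)$, together with the constant-rank convention for $\tau_0$. Everything else is a direct unwinding of the axioms.
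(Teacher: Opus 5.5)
Your proposal is correct and follows essentially the same route as the paper: the double vector bundle structure is given by the two commuting regular actions $\tau$ and $\kappa$ on $D$, with the $\tau$-side identified with $E\rightarrow M$ via $j$. The only difference is that the paper makes this identification literal by passing to the fiber product $D\times_{\tau_0(D)}E$ with actions $\tau\times\id$ and $\kappa\times\delta$, then transporting back along the diffeomorphism $D\times_{\tau_0(D)}E\cong D$ induced by $j$; also, with the paper's convention $(D,\delta^h,\delta^v)$ your square $\left[\begin{smallmatrix} D & E\\ B & M\end{smallmatrix}\right]$ corresponds to $(D,\tau,\kappa)$ rather than $(D,\kappa,\tau)$, which is harmless given the flip ambiguity.
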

\begin{proof}
Consider the fiber product 
 $$
 \begin{array}{rcl}
   D \!\underset{\tau_0(D)}{\times} \! E & = & \{ (d,e) \in D \times E  \: : \: \tau_0(d)= j(e) \}, 
   \\ 
   B \!\underset{\tau_0(B)}{\times} \! M & = & \{ (b,m) \in B \times M  \: : \: \tau_0(b)= j(m) \}.
 \end{array}
 $$
 Then the following square defines a double vector bundle:
 \begin{align*}
  \xymatrix@C=3pc@R=1.5pc{
    D \!\underset{\tau_0(D)}{\times} \! E 
    \ar[r]^-{\pr_2} \ar[d]_-{\kappa_0 \times \pi}
    & E \ar[d]^-{\pi}
    \\
    B  \!\underset{\tau_0(B)}{\times} \! M \ar[r]_-{\pr_2} & M
 }
 \end{align*}
 Indeed, the horizontal homogeneity action on  $D \times_{\tau_0(D)}  E$ is induced by the product action $\tau \times \id$, whereas the vertical one is induced by $\kappa \times \delta$, which clearly commute with each other. 
 Next, since $j$ realizes a diffeomorphism $E \cong \tau_0(D)$, it gives a natural diffeomorphism $D \times_{\tau_0(D)}E \cong D$, and similarily  $B \times_{\tau_0(B)}M  \cong M$, endowing in this way $D$ with a double vector structure.
\end{proof}
\begin{remark}
    Conversely, every double vector bundle $(D,A,B,M)$ is a vb-object, but in two distinct ways: either horizontally as
    $\left[\begin{smallmatrix}
     D \\ B
    \end{smallmatrix}\right]
    \rightarrow 
    \left[\begin{smallmatrix}
     E \\ M
    \end{smallmatrix}\right]$, or vertically as $\left[\begin{smallmatrix}
     D & A
    \end{smallmatrix}\right] \rightarrow 
    \left[\begin{smallmatrix}
     B & M
    \end{smallmatrix}\right]$. For the sake of efficiency, the definitions will often be stated only for the horizontal vb-objects, the vertical counterpart being completely analogous (under flip~\S\ref{par.dbcat-flip}).
\end{remark}
From this perspective, given a horizontally wide dvb-embedding\footnote{The discussion can be adapted for any horizontally wide constant-rank dvb-map $\varphi$.} $\varphi: Q \hookrightarrow D$, there is an associated canonical quotient projection $\quot_h$ fitting in a sequence of dvb-map:
\begin{align*}
    \begin{split} 
        \left[
        \vcenter{
        \xymatrix@R=0pc@C=0pc{
            Q & E \\ B  & M
        }}    
        \right]
        \xrightarrow{ \quad 
        \left[\begin{smallmatrix}
         \varphi & \id 
        \\ \psi & \id 
        \end{smallmatrix}\right]
        \quad
        }
        \left[
        \vcenter{
        \xymatrix@R=0pc@C=0pc{
            D 
            & E 
            \\ A 
            & M
        } }
        \right]
        \xrightarrow{ \quad 
        \quot_h
        \quad
        }
        \left[
        \vcenter{
        \xymatrix@R=0pc@C=0pc{
            D/Q 
            & E 
            \\ A/B 
            & M
        } }
        \right]
    \end{split}
\end{align*}
 which we interpret as a short exact sequence of horizontal vb-objects over $E \rightarrow M$, where $\varphi$ is now viewed as an embedding of vb-objects,
    \begin{align*}
    \begin{split} 
        0\xrightarrow{\qquad}
        \left[
        \vcenter{
        \xymatrix@R=0pc@C=1pc{
            Q \\ B  
        }}   
        \right]
    \xrightarrow{ 
        \quad \varphi \quad
        }
         \left[
        \vcenter{
        \xymatrix@R=0pc@C=1pc{
            D \\ A  
        }}   
        \right]
        \xrightarrow{ 
        \quad \quot_h  \quad
        }
         \left[
        \vcenter{
        \xymatrix@R=0pc@C=1pc{
            D/Q \\ A/B  
        }}   
        \right]
        \xrightarrow{\qquad} 0
    \end{split}
\end{align*}
The category of horizontal vb-objects over $E \rightarrow M$, together with their morphisms, will be denoted $\VB^h(E \!\rightarrow \!M)$.
The zero-object in $\VB^h(E \!\rightarrow \!M)$ corresponds to the double vector bundle $E^h$ (see~\S\ref{par.dvb-lift}), and the morphisms are precisely the class of dvb-maps of the shape 
$ 
\left[
        \begin{smallmatrix}
         * & \id 
        \\ * & \id 
        \end{smallmatrix}
\right] 
$.
Moreover, as in the category $\VB$, the kernels and cokernels make sense for any constant-rank map (that is, every vb-map involved is of constant rank).
The category $\VB^v(E\!\rightarrow\! M)$ of vertical vb-objects over $E\rightarrow M$ shares the same kind of properties, with quotient projection denoted by $\quot_v$.
\begin{deff}
Let $\varphi: Q \hookrightarrow D$ be an embedding in $\VB^h(E\! \rightarrow\! M)$ as above. 
 The \textbf{horizontal quotient} $D/_{\! h}Q$ 
 (occasionally denoted as $\frac{D}{Q}\!{\scriptstyle h}$) 
 is the double vector bundle with total space $D/Q$ (as quotient vector bundle over $E$) equipped with the following quotient homogeneity structure:
$$
\begin{array}{rclcl}
     \delta^{D/Q}_{E}(\lambda, [d]) 
     &=&
     [\delta^D_E(\lambda, d)] &=& (\delta^D_E / \delta^Q_E) (\lambda,[d]),
     \\[0.5pc]
     \delta^{D/Q}_{A/B}(\mu, [d])
     &=&
     [\delta^D_B(\mu, d)] & = & (\delta^{D}_B / \delta^Q_A)(\mu,[d]).
\end{array}
$$
\end{deff}
\begin{prop}
 $(D/_{\! h}Q, \quot_h)$ is a cokernel for $\varphi$ in $\VB^h(E \rightarrow M)$.
\end{prop}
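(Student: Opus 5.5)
We prove the statement in three steps. First, $D/_{\!h}Q$ is a double vector bundle and an object of $\VB^h(E\!\rightarrow\!M)$. Second, $\quot_h$ is a morphism in this category with $\quot_h\circ\varphi=0$. Third, $\quot_h$ has the universal property of a cokernel. Throughout, $Q$ is identified with its image under $\varphi$, so $Q\subseteq D$ is a wide vector sub-bundle over $E$ and $B\subseteq A$ is its vertical side.

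\emph{Step 1: $D/_{\!h}Q$ is a vb-object.} By the quotient homogeneity structure (\S\ref{par.quot-homog-struct}), $\delta^{D/Q}_E=\delta^D_E/\delta^Q_E$ is a regular $(\R,\cdot)$-action with base $E$; this requires only that $Q\to E$ be a wide vb-embedding. For the second action we use that $\varphi$ is a dvb-map. This means $\delta^D_B\circ(\id\times\varphi)=\varphi\circ\delta^Q_A$, so $\delta^D_B$ preserves $Q$. Since the two actions on $D$ commute, $\delta^D_B$ also acts by vector bundle endomorphisms of $D\to E$ covering the identity on $E$. Hence it descends to the quotient: $[\delta^D_B(\mu,d)]$ is well defined, and it commutes with $\delta^{D/Q}_E$.

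Regularity of the descended action is the main obstacle. We handle it by viewing $\delta^D_B$ as the structural action $\tau$ of the horizontal vb-object $[D,A]\to[E,M]$, and checking the axioms of Definition~\ref{def.vb-object} for $D/Q$ with the embedding $E\hookrightarrow D\to D/Q$.
\begin{itemize}
\item Smoothness: the quotient vector bundle $D/Q\to E$ is locally trivial, and local splittings of $Q\subseteq D$ let us write the action in local coordinates.
\item Fixed points: if $\tau_\mu[d]=[d]$ for all $\mu$, then in particular $[\tau_0 d]=[d]$. Choose a complement of $Q$ in $D$ which is invariant under the commuting action. Such a complement exists because $D\cong$ decomposed locally via a splitting of the double vector bundle, with core and side parts, and $Q$ is compatible with this splitting. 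In the complement, the fixed-point property of $\tau$ on $D$ forces $d\in j(E)+Q$.
\item Regularity itself: imitate the proof of the quotient homogeneity proposition. Lift the curve $\mu\mapsto\tau_\mu[d]$ to the curve $\mu\mapsto\tau_\mu d$ in $D$. If the derivative at $0$ vanishes in $D/Q$, then the derivative of the lift is tangent to $Q$ along the zero section, hence equals the vertical lift of some $q\in Q$. Regularity of $\tau$ then gives $d-q$ fixed, so $[d]$ lies in the base $A/B$.
\end{itemize}
The side bundles are $E$ and $A/B$, the latter being the vertical side quotient of the vb-map $\psi$.

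\emph{Step 2: $\quot_h$ is a morphism.} The map $\quot_h$ has the form $\left[\begin{smallmatrix} q & \id\\ q' & \id\end{smallmatrix}\right]$, where $q:D\to D/Q$ and $q':A\to A/B$ are the quotient projections. It intertwines both actions by construction, and it is a morphism in $\VB^h(E\!\rightarrow\!M)$. Moreover $\quot_h\circ\varphi$ factors through the zero object $E^h$, because $q\circ\varphi$ is the zero section over $E$.

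\emph{Step 3: universal property.} Let $\theta:D\to D'$ be a morphism in $\VB^h(E\!\rightarrow\!M)$ with $\theta\circ\varphi$ zero, i.e. $\theta(Q)$ lies in the zero section $E\subseteq D'$. The classical cokernel property in $\VB$ over $E$ (Remark~\ref{rem.hor-cokernel}) gives a unique vb-map $\bar\theta:D/Q\to D'$ over $\id_E$ with $\bar\theta\circ q=\theta$. It remains to check that $\bar\theta$ intertwines the second actions. This follows from the identity
\[
\bar\theta(\tau_\mu[d])=\theta(\tau_\mu d)=\tau'_\mu\theta(d)=\tau'_\mu\bar\theta[d].
\]
Compatibility with the embeddings of $E$ is immediate. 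Uniqueness of $\bar\theta$ is inherited from the surjectivity of $q$.
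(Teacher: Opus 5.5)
Your Steps~2 and~3 are fine. The zero morphism $Q\to D'$ is $x\mapsto\pi^h(x)\in E\subseteq D'$. A morphism killing $Q$ descends uniquely to a vb-map over $\id_E$, and your displayed identity gives equivariance for the second action. That universal property is all the paper asserts here: it gives no proof, and it builds the double vector bundle structure of $D/_{\!h}Q$ into the definition.

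The gap is in Step~1, which you single out as the crux. First, you have the two actions of Definition~\ref{def.vb-object} swapped.
\begin{itemize}
\item The structural action $\tau$ is the one fixing $j(E)$, namely $\delta^D_E$. This is the action along which you quotient, and its regularity on $D/Q$ is exactly the paper's quotient homogeneity proposition.
\item The other action $\kappa=\delta^D_A$ (your ``$\delta^D_B$'') restricts on $E$ to its scalar multiplication. So $\kappa_\mu$ covers $e\mapsto\mu e$, not $\id_E$. Descent still holds, because $\kappa_\mu$ maps $Q_e$ linearly into $Q_{\mu e}$.
\end{itemize}
With the labels corrected, the fixed-point conditions for $\tau'=\delta^{D/Q}_E$ are automatic. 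Your invariant-complement argument rests on an unproved adapted splitting and draws a conclusion about the wrong action, so it is both unjustified and unnecessary.

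The real failure is the regularity argument for $\kappa'$.
\begin{itemize}
\item The velocity $v$ at $\mu=0$ of $\mu\mapsto\kappa_\mu(d)$ sits at $a:=\pi^v(d)\in A$. It is the vertical lift of $d$ for $D\to A$.
\item By contrast, $\ker(q_*)_a$ consists of vertical lifts at $a$ \emph{for $D\to E$} of elements of the fibre $Q_{0_m}$ of $Q\to E$ over $0_m$.
\item So regularity of $\kappa$ cannot be applied to an equality between lifts for two different bundle structures.
\item Also, ``$d-q$'' is meaningless in $D\to A$: the fibre over $a$ contains no point of $Q$ unless $a\in B$.
\end{itemize}

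The missing idea is the core. Since $\ker(q_*)_a\subseteq\ker(\pi^h_*)_a$ and $\pi^h\circ\kappa_\mu=\delta^E_\mu\circ\pi^h$, regularity of $E$ gives $\pi^h(d)=0_m$. Let $c$ be the difference of $d$ and $a$ in the fibre $D_{0_m}$ of $D\to E$. Then $\pi^v(c)=0$ by linearity of $\pi^v$ on $D_{0_m}$, so $c$ is a core element. On $D_{0_m}$, $\kappa_\mu$ is linear, fixes $a$, and acts by $\mu$ on the core. Hence $\kappa_\mu(d)=a+\mu c$ in $D_{0_m}$, and $v$ is the $D\to E$ vertical lift of $c$ at $a$. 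Now $v\in\ker(q_*)_a$ forces $c\in Q$, so $[d]=[a]=\kappa'_0[d]$.

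Step~1 is then complete once you add two easy points. First, $\kappa'_0(D/Q)=q(A)\cong A/B$, because $Q\cap A=B$. Second, $\kappa'$ is smooth, because $\id\times q$ is a surjective submersion. Alternatively you could use a local decomposition of $D$ adapted to $Q$, but then its existence has to be proved.
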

\begin{remark}
  In the same manner, the cokernel of an embedding of vertical vb-object in $\VB^v(E\rightarrow M)$, 
  $$
  \left[
        \vcenter{
        \xymatrix@R=0pc@C=0pc{
            P 
            & A  
        } }
        \right]
        \xymatrix{\xhookr[1.5pc]^-\Psi &}
        \left[
        \vcenter{
        \xymatrix@R=0pc@C=0pc{
            D 
            & B 
        } }
        \right]
    $$
  is called the \textbf{vertical quotient}, denoted by $\frac{D}{P}{\! \scriptstyle v}$.
   It corresponds to the double vector bundle with underlying total space $D/P$ and homogeneity structure given by
  $$
        \delta^{D/P}_{B/A}(\lambda, [d]) 
        =(\delta^D_B/\delta^D_A)(\lambda, [d]), \qquad 
        \delta^{D/P}_{E} (\mu, [d]) = (\delta^D_E/\delta^P_E)(\mu,[d]).
$$
\end{remark}
\begin{remark}
 Notice the difference with the comments of~\cite[\S 4.5]{meinrenken2022dvbquot}, the kernel of the quotient projection $D \rightarrow D/_{\! \scriptstyle h}Q$ is $Q$ when properly interpreted in the category of vb-object over $E \!\rightarrow \! M$ (instead of the category of double vector bundles and dvb-maps).
\end{remark}
\subsubsection{Quotient and tangent functor.}
\begin{prop}\label{prop.quot-tgt}
Let $F \hookrightarrow E$ be a wide vb-embedding.
Then, there is natural dvb-isomorphism
 $
 \frac{TE}{TF}{ \!\scriptstyle v} \cong T(E/F)
 $
 where $TE,TF$ and the right-hand side are endowed with the dvb-structure of the example~\ref{ex.dvb-TE}.
 Naturality goes as follows: 
 any 2-map $(\varphi,\psi): j_1 \Rightarrow j_2$ in $\Embed\VB$ induces a canonical identification of dvb-maps
 $$
 \left. \vcenter{
 \xymatrix@R=1.5pc@C=1pc{
    \frac{TE_1}{TF_1}{\!\scriptscriptstyle v} 
    \ar[d]_{
      \frac{\varphi_*}{\psi_*}{\!\scriptscriptstyle v}
    } 
    \ar@{}[r]|-{\rotatebox[origin=c]{0}{$\cong$}}& T(E_1/F_1) \ar[d]^-{(\varphi/\psi)_*} 
    \\
    \frac{TE_2}{TF_2}{\!\scriptscriptstyle v} \ar@{}[r]|-{\rotatebox[origin=c]{0}{$\cong$}}& T(E_2/F_2)
 }
 }\right.
 $$
\end{prop}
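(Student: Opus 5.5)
The plan is to obtain the isomorphism as the map induced on cokernels by the tangent of the quotient projection. Write $j: F \hookrightarrow E$ for the wide vb-embedding and $q: E \rightarrow E/F$ for the quotient projection, a wide vb-surmersion. Applying the tangent functor to the short exact sequence $0 \rightarrow F \rightarrow E \rightarrow E/F \rightarrow 0$ of vector bundles over $M$ gives $j_*: TF \rightarrow TE$ and $q_*: TE \rightarrow T(E/F)$. By example~\ref{ex.f**} both are dvb-maps for the structures of example~\ref{ex.dvb-TE}. Their vertical side-maps are $\id_{TM}$ and their horizontal side-maps are $j$ and $q$, so $j_*$ and $q_*$ are morphisms in $\VB^v(TM \rightarrow M)$, with zero object $TM^v$. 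Since $j_*$ is an embedding of vertical vb-objects, the vertical quotient $\frac{TE}{TF}{\!\scriptstyle v}$ is defined and is a cokernel of $j_*$ in that category, by the vertical analogue of the cokernel proposition.

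First, I check that $q_* \circ j_* = (q\circ j)_*$ factors through the zero object. Indeed $q \circ j$ equals the zero section composed with the projection $F \rightarrow M$, so its differential lands in $T(0_M) \cong TM$, which is exactly the image of $TM^v$ in $T(E/F)$. The universal property of the cokernel then yields a unique dvb-map $\bar q_*: \frac{TE}{TF}{\!\scriptstyle v} \rightarrow T(E/F)$ with $\bar q_* \circ \quot_v = q_*$; explicitly $\bar q_*([\xi]) = q_*(\xi)$.

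Second, I show that $\bar q_*$ is a diffeomorphism, which is the main obstacle. It is cleanest to argue with $TE \rightarrow TM$ as the vector bundle being quotiented. Fiberwise over $v \in TM$, the sequence $0 \rightarrow TF_v \rightarrow TE_v \rightarrow T(E/F)_v \rightarrow 0$ is exact, because the tangent functor preserves exactness of split short exact sequences of vector bundles (choose a local splitting $E \cong F \oplus E/F$ and use $T(F\oplus E/F) = TF \times_{TM} T(E/F)$). Hence $\bar q_*$ is a bijective vb-map over $\id_{TM}$, and therefore a vb-isomorphism. Since it is a dvb-map, it is a dvb-isomorphism. Compatibility with the other homogeneity structure, $\delta^{T(E/F)}_{E/F}$ versus $\delta^{TE}_E/\delta^{TF}_F$, is automatic, because $q_*$ intertwines the tangent-bundle scalar multiplications and $\quot_v$ is surjective. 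This local-splitting argument is the only non-formal step, and it yields canonicity because the resulting map $\bar q_*$ is defined without reference to the splitting.

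Finally, naturality. Given a 2-map $(\varphi,\psi): j_1 \Rightarrow j_2$ in $\Embed\VB$ with wide embeddings, the pair $(\varphi_*,\psi_*)$ is a morphism of short exact sequences of vertical vb-objects over the map $TM_1 \rightarrow TM_2$. The vertical quotient map $\frac{\varphi_*}{\psi_*}{\!\scriptscriptstyle v}$ is characterised by $\frac{\varphi_*}{\psi_*}{\!\scriptscriptstyle v}\circ \quot_{v,1} = \quot_{v,2}\circ \varphi_*$. Meanwhile $(\varphi/\psi)\circ q_1 = q_2 \circ \varphi$ gives $(\varphi/\psi)_* \circ q_{1*} = q_{2*}\circ\varphi_*$ after differentiating. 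Composing, both paths around the square agree after precomposition with the surjection $\quot_{v,1}$, so the square commutes. This gives the canonical identification of dvb-maps.
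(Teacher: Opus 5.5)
Your proof is correct and follows the paper's route: apply $T$ to $0\to F\to E\to E/F\to 0$, observe that $(j_*,q_*)$ remains a short exact sequence in $\VB^v(TM\to M)$, and conclude by uniqueness of cokernels. The differences are in the details. You justify the fiberwise exactness over $TM$ with a local splitting, which the paper simply asserts. You also obtain naturality by cancelling the surjection $\quot_{v,1}$, whereas the paper uses a cokernel argument in a category of 2-dvb-maps.
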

\begin{proof}
   The dvb-isomorphism $\frac{TE}{TF}{ \!\scriptstyle v} \cong T(E/F)$ follows from the universality of the cokernels and the fact that the tangent functor preserves exactness. 
   Precisely, 
   $
   0 \rightarrow F \xrightarrow{j} E \xrightarrow{q} E/F \rightarrow 0
   $
   is a short exact sequence of vector bundle over a fixed base manifold $M$. Then, applying the tangent functor yields the short exact sequence 
   $$
   0 \longrightarrow 
   \left[\begin{matrix}TF & F\end{matrix}\right] 
   \xrightarrow{ [j_* \:\, j] }\left[\begin{matrix}TE & E\end{matrix}\right] 
   \xrightarrow{[q_* \:\, q]} 
   \left[\begin{matrix}T(E/F) & E/F\end{matrix}\right]
   \longrightarrow 0
   $$ 
   in the category $\VB^v(TM \rightarrow M)$. Consequently, the vertical quotient $\frac{TE}{TF}{ \!\scriptstyle v}$ associated to $[j_* \: \, j]$ is isomorphic (in a unique way) to $\left[\begin{matrix}T(E/F) & E/F\end{matrix}\right]$ in $\VB^v(TM \rightarrow M)$, thus also as double vector bundles. 
   Next, consider a morphism of short exact sequences of vector bundles, or equivalently a short exact sequence of 2-vb-maps\footnote{Notice that throughout the proof, we took the liberty to deal with 2-maps in $\Immer\VB^h$ (instead of $\Immer\VB$, which stands for $\Immer\VB^v$ under our conventions) for the sake of saving some space. Fortunately, there is no loss of generality.}, 
   $$
   \xymatrix@R=1.5pc@C=3pc{
   0 \ar[r] & F_1 \ar[r]^{j_1} \ar[d]_-{\psi} \ar@{}[rd]|{\rotatebox[origin=c]{0}{$\overset{J}{\Rightarrow}$}} & E_1 \ar[r]^-{q_1} \ar[d]_-{\varphi} \ar@{}[rd]|{\rotatebox[origin=c]{0}{$\overset{Q}{\Rightarrow}$}} & E_1/ F_1 \ar[r] \ar[d]^-{\varphi / \psi} & 0
   \\
   0 \ar[r] & F_2 \ar[r]_{j_2}  & E_2 \ar[r]_-{q_2} & E_2/ F_2 \ar[r]& 0
   }
   $$
   where the first row is over $M_1$ and the second one is over $M_2$. 
   One see that $Q: \varphi \Rightarrow \varphi/\psi$ is a cokernel (in a suitable category) for the 2-vb-embedding $J: \psi \Rightarrow \varphi$.
   Applying the tangent functor to the quotient vb-map $\varphi/\psi$ yields the dvb-map 
   $$
   \left[ \vcenter{
   \xymatrix@R=0pc@C=0pc{
   T(E_1 / F_1) & E_1/ F_1  
   \\
   TM_1 & M_1 
   } 
   } \right]
   \xrightarrow{
   \quad 
   \left[ 
   \begin{smallmatrix}
    (\varphi/\psi)_* & \varphi/\psi \\ f_* & f
   \end{smallmatrix}
   \right]
   \quad
   }
   \left[ \vcenter{
   \xymatrix@R=0pc@C=0pc{
   T(E_2 / F_2) & E_2/ F_2 
   \\
   TM_2 & M_2 
   } 
   } \right]
   $$
   in such a way that $Q_*: \varphi_* \Rightarrow (\varphi/\psi)_*$ is a cokernel for $J_*: \psi_* \Rightarrow \varphi_*$ in the subcategory of morphisms of the horizontalization of the double category of $(\Immer\DVB, \DVB)^\square$, whose 2-morphisms are (horizontally directed) squares of dvb-maps of the shape: an upper arrow in $\VB^v(TM_1 \rightarrow M_1)$, a lower arrow in $\VB^v(TM_2 \rightarrow M_2)$, and the lateral arrows are dvb-maps with vertical side vb-map $f_*: TM_1 \rightarrow TM_2$. But, since this latter category admits a terminal object, namely the vertical lift $f_*^v: TM_1^v \rightarrow TM_2^v$, there is a unique compatible isomorphism $(\varphi/\psi)_* \cong \frac{\varphi_*}{\psi_*}{\!\scriptscriptstyle v}$.
\end{proof}

\subsubsection{Quotient and side-pullbacks.}
\begin{prop}\label{prop.vpb-hquot}
    The vertical side-pullback is compatible with the horizontal quotient in the sense that: if $(Q,E,B,M) \hookrightarrow (D,E,A,M)$ is a dvb-embedding defining  a embedding in $[Q, B] \hookrightarrow [D,A]$ in $\VB^h(E\!\rightarrow \! M)$, and $\Upsilon: (V\hookrightarrow U) \Rightarrow (B \hookrightarrow A)$ is a 2-map of wide vb-embeddings given by the pair $(\varphi,\psi)$, then there is a canonical dvb-isomorphism
    \begin{align}\label{eq.vpb-hquot}
     \begin{split}
        \left(\varphi / \psi \right)^{*,v} 
        \left(D \big/_{\!\!{\scriptstyle h}} Q \right)  
        \cong
        (\varphi^{*,v}D)  \big/_{\!\!{\scriptstyle h}} (\psi^{*,v} Q).
     \end{split}
    \end{align}
\end{prop}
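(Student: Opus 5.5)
We first sort out the data. The embedding $[Q,B]\hookrightarrow[D,A]$ in $\VB^h(E\!\rightarrow\!M)$ is a dvb-embedding of the shape $\left[\begin{smallmatrix}\iota & \id \\ \iota_B & \id\end{smallmatrix}\right]$, with $\iota_B: B\hookrightarrow A$ its vertical side map. Hence $D/_{\!h}Q$ has sides $(E, A/B, M)$.

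The 2-map $\Upsilon$ gives vb-maps $\varphi:U\rightarrow A$ and $\psi:V\rightarrow B$ over a common base map $g:N\rightarrow M$ (both embeddings being wide), with $\varphi\circ k=\iota_B\circ\psi$ for $k:V\hookrightarrow U$. So $\varphi/\psi: U/V\rightarrow A/B$ is defined as in \S\ref{par.quotvbmap}.

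The vertical side-pullbacks have the following sides:
\begin{itemize}
\item $(\varphi/\psi)^{*,v}(D/_{\!h}Q)$ has sides $(g^*E,\, U/V,\, N)$;
\item $\varphi^{*,v}D$ and $\psi^{*,v}Q$ have sides $(g^*E, U, N)$ and $(g^*E, V, N)$ respectively.
\end{itemize}
Both sides of (\ref{eq.vpb-hquot}) therefore live in $\VB^h(g^*E\!\rightarrow\!N)$, and the strategy is to compare them there using the cokernel property.

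First we produce the embedding on the right-hand side. The pair $(\iota, k)$ together with $\id_E$ defines a morphism of good cospans $(V^v\rightarrow B^v\leftarrow Q)\rightarrow(U^v\rightarrow A^v\leftarrow D)$. The cospans are good because $\underline{\pi}^v$ is a dvb-surmersion. By naturality of fibre products (after Proposition~\ref{prop.dvb-fp}) this yields a dvb-map $k\times_{\iota_B}\iota:\psi^{*,v}Q\rightarrow\varphi^{*,v}D$. Its horizontal side map is the identity of $g^*E$, and it is an embedding as a restriction of the product embedding $k^v\times\iota$. It is thus an embedding in $\VB^h(g^*E\!\rightarrow\!N)$, so the right-hand quotient makes sense.

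Next we construct the comparison map. Consider the composite
\[
\varphi^{*,v}D\subseteq U^v\times D\xrightarrow{\ q\times \quot_h\ }(U/V)^v\times (D/_{\!h}Q),
\]
where $q:U\rightarrow U/V$ is the quotient projection. It lands in $(\varphi/\psi)^{*,v}(D/_{\!h}Q)$, since $\pi^v[d]=[\pi^v d]=[\varphi(u)]=(\varphi/\psi)[u]$. It is a dvb-map, being a restriction of a product of dvb-maps. It has horizontal side $\id_{g^*E}$ and kills $\psi^{*,v}Q$: indeed $(k(v),\iota(x))\mapsto([k(v)],[\iota(x)])$, which is zero in the vb-object sense, namely it lands in the image of $g^*E$. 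By the cokernel property of $D/_{\!h}Q$ (the Proposition stating that $(D/_{\!h}Q,\quot_h)$ is a cokernel), applied now in $\VB^h(g^*E\!\rightarrow\!N)$, this factors uniquely through a morphism of vb-objects
\[
\Theta:(\varphi^{*,v}D)/_{\!h}(\psi^{*,v}Q)\rightarrow(\varphi/\psi)^{*,v}(D/_{\!h}Q).
\]
Because $\Theta$ is built from the cokernel property, it is canonical and compatible with both homogeneity structures.

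The main obstacle is showing that $\Theta$ is an isomorphism. We plan to check this by a rank count together with injectivity. Both sides are vector bundles over $U/V$ (vertical structure) whose fibres over $[u]$ are identified as follows:
\begin{itemize}
\item on the left, with $\{d\in D:\pi^v d=\varphi(u)\}$ modulo the fibre of $Q$ over $\psi(v)$, taken over choices of $v$;
\item on the right, with the fibre of $D/Q$ over $(\varphi/\psi)[u]$.
\end{itemize}
The core decomposition of double vector bundles makes both ranks equal to $\mathrm{rk}\,E$ plus the rank of the core of $D/Q$. Surjectivity follows because $\quot_h$ and $q$ are surjective submersions and fibres lift. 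It remains to check that the kernel of $\Theta$ lies in the zero object. Take $(u,d)$ with $[u]=0$ and $[d]$ in the image of $g^*E$. Then $u=k(v)$ and $d\in\iota(Q)+E$; adjusting by $E$, we get $d=\iota(x)$ with $\pi^v x=\psi(v)$, using that $\iota_B$ is injective. Hence $(u,d)\in\psi^{*,v}Q$.

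Since $\Theta$ is then a bijective vb-map of constant rank between bundles of equal rank, it is a vb-isomorphism. Moreover, it respects both homogeneity structures by construction, so it is a dvb-isomorphism, which gives (\ref{eq.vpb-hquot}).
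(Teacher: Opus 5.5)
Your proposal is correct and follows the paper's route. Both arguments compare the two sides as cokernels in $\VB^h(g^*E\!\rightarrow\!N)$ of the pulled-back embedding $(v,x)\mapsto(k(v),\iota(x))$, and your comparison map $(u,d)\mapsto([u],\quot_h(d))$ is exactly the paper's pullback of $\quot_h$ along the 2-vb-map $(\varphi/\psi,\varphi)$. The paper only asserts that the pulled-back sequence is short exact; your injectivity check and rank count supply that exactness explicitly. One small slip: the middle component of your morphism of cospans is $\iota_B^v\colon B^v\rightarrow A^v$, not $\id_E$.
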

\begin{remark}
 Conversely, the horizontal side-pullback is compatible with the vertical quotient.
\end{remark}
\begin{proof}
 On the left hand-side, the horizontal quotient $D/_{\! h} Q$ fits into the following short exact sequence in $\VB^h(E\!\rightarrow\! M)$
\begin{align*}
    \begin{split} 
    \xymatrix{
        0 \ar[r] 
        & 
        {\left[
        \begin{matrix}
         Q  \\ B 
        \end{matrix}
        \right]}
        \xhookr[1.4pc]^-{\iota} 
        &
        {\left[
        \begin{matrix}
            D  \\ A 
        \end{matrix}
        \right]}
        \ar[r]^-{\quot_h}
        &
        {\left[
        \begin{matrix}
            D/Q \\ A/B 
        \end{matrix}
        \right]}
        \ar[r]
        &
        0.
    }
    \end{split}
\end{align*}
Let $f$ be the base-map of $\varphi$ and $\psi$.
Let $\Theta: (U \rightarrow A) \Rightarrow (U/V \rightarrow A/B)$ be the 2-vb-map given by the pair $(\varphi/\psi, \varphi)$. Then, by performing the vertical pullbacks of $\iota$ and $\quot_h$ by $\Upsilon$ and $\Theta$ respectively, we obtain the following short exact sequence in $\VB^h(f^*E\! \rightarrow \! M)$,
\begin{align*}
    \begin{split} 
    \xymatrix@C=3pc{
        0 \ar[r] 
        & 
        {\left[
        \begin{matrix}
         \psi^*Q  \\ V 
        \end{matrix}
        \right]}
        \xhookr[2.4pc]^-{\Upsilon^*\iota} 
        &
        {\left[
        \begin{matrix}
            \varphi^*D  \\ U 
        \end{matrix}
        \right]}
        \ar[r]^-{\Theta^*\quot_h}
        &
        {\left[
        \begin{matrix}
            (\varphi/\psi)^*(D/Q) \\ U/V 
        \end{matrix}
        \right]}
        \ar[r]
        &
        0.
    }
    \end{split}
\end{align*}
That is, $(\varphi/\psi)^{*,v}(D/_{\! h}Q)$ is a cokernel for $\Upsilon^*\iota$. But, since $(\varphi^*D)/_{\! h}(\psi^*Q)$ is also a cokernel for $\Upsilon^*\iota$, there exists an isomorphism $(\varphi/\psi)^{*,v}(D/_{\! h}Q) \cong (\varphi^*D)/_{\! h}(\psi^*Q)$ as objects in the category $\VB^h(f^*E \! \rightarrow \! M)$, and thus as double vector bundles. Moreover, the latter isomorphism is uniquely determined by its compatibility with the respective legs\footnote{Using the terminology from~\cite{riehl-context}.}.
\end{proof}
\subsubsection{Quotient of dvb-maps.} 
Let $\iota_1: Q_1 \hookrightarrow D_1$ and $\iota_2: Q_2 \hookrightarrow D_2$ be two embeddings in $\VB^v(E_1 \!\rightarrow \!M_1)$ and $\VB^v(E_2 \!\rightarrow\! M_2)$, respectively. Given a 2-dvb-map $(\varphi,\psi) : \iota_1 \Rightarrow \iota_2$ such that the vertical side-maps of $\varphi$ and $\psi$ coincide (say, a ``base-change'' for vb-objects), there is an associated dvb-map $\frac{\varphi}{\psi}{\! \scriptstyle v}$, called the \textbf{vertical quotient} of $\varphi$ by $\psi$, uniquely characterized by the commutativity of the following diagram:
$$
\xymatrix@C=3.5pc{
  {\left[
    \begin{matrix}
      Q_1 & B_1\\ E_1 & M_1          
    \end{matrix}
   \right]}
  \ar[d]_-{
  {\left[
   \begin{smallmatrix}
    \psi &  \beta \\ \varepsilon & f          
   \end{smallmatrix}
   \right]}
  }
   \xhookr[3pc]^-{
   {\left[
    \begin{smallmatrix}
      \iota_1 &  i_1\\ \id & \id          
    \end{smallmatrix}
    \right]
   }}
  &
  {\left[ \begin{matrix}
        D_1  & A_1\\ E_1 & M_1          
    \end{matrix}\right]}
   \ar[d]^-{
   {\left[ \begin{smallmatrix}
    \varphi &  \alpha \\ \varepsilon & f         
   \end{smallmatrix}\right]}} 
   \ar[r]^-{
   {\left[
    \begin{smallmatrix}
      q &  q \\ \id & \id          
    \end{smallmatrix}
    \right]
   }} 
   &
   {\left[ \begin{matrix}
    \frac{D_1}{Q_1}{\! \scriptstyle v} & \frac{A_1}{B_1} \\ E_1 & M_1          
  \end{matrix}\right]}
  \ar[d]^-{
   {\left[
    \begin{smallmatrix}
      \frac{\varphi}{\psi}{\! \scriptscriptstyle v} &  \frac{\alpha}{\beta} \\ \varepsilon & f          
    \end{smallmatrix}
    \right]}}
    \ar[r]
    & 0
   \\
   {\left[
    \begin{matrix}
      Q_2 & B_2\\ E_2 & M_2          
    \end{matrix}
   \right]}
    \xhookr[3pc]_-{
   {\left[
    \begin{smallmatrix}
      \iota_2 &  i_2\\ \id & \id          
    \end{smallmatrix}
    \right]
   }} 
  &
  {\left[ \begin{matrix}
    D_2 & A_2 \\ E_2 & M_2          
  \end{matrix}\right]}
  \ar[r]_-{
   {\left[
    \begin{smallmatrix}
      q &  q \\ \id & \id          
    \end{smallmatrix}
    \right]}} 
    & 
    {\left[ \begin{matrix}
    \frac{D_2}{Q_2}{\! \scriptstyle v} & \frac{A_2}{B_2}\\ E_2 & M_2          
  \end{matrix}\right]}
  \ar[r]
    & 0
 }
 $$
 where $q$ stands for the quotient projections and the 0's denote the zero vb-objects in the corresponding category.
  As in remark~\ref{rem.hor-cokernel}, the vertical quotient dvb-map may be interpreted as a cokernel in a suitable category of 2-dvb-map. More generally, the above definition still holds when $\iota_1,\iota_2$  are replaced by dvb-maps of constant rank. 
  The horizontal quotient of dvb-maps is defined along the same lines.
%
%
%
\subsubsection{Quotient flip maps.}\label{par.quotient-flip}
Let $\iota : Q \hookrightarrow D$ be an embedding of vertical vb-object over $E \! \rightarrow \! M$. 
Then, the associated vertical quotient $\frac{D}{Q}{\! \scriptstyle v}$ belongs to $\VB^v(E \!\rightarrow \!M)$. 
Likewise, since $\flip(\iota)$ is an embedding, the horizontal quotient $D /_{\! h} Q$ makes sense, but this time as an object in the category $\VB^h(E\!\rightarrow\! M)$. Let $\flip_Q$ and $\flip_D$ be the respective total flips, then their quotient makes sense as the unique map $\flip_D / \flip_Q$ fitting in the following commutative diagram:
$$
\xymatrix@C=2pc@R=1.6pc{
  {\left[ \begin{matrix}
    Q & B          
   \end{matrix}\right]}
   \flipard_-{\flip_Q}
   \xhookr[1.4pc]^-{\iota }
   &
   {\left[ \begin{matrix}
    D & A        
   \end{matrix}\right]}
   \flipard^-{\flip_D} 
   \ar[r]^-{\quot_v} 
   &
   {\left[ \begin{matrix}
     \frac{D}{Q}{\! \scriptstyle v}  &  \frac{A}{B} 
   \end{matrix}\right]}
   \flipard^-{\flip_D/\flip_Q }
    \ar[r]
    & 0
   \\
   {\left[ \begin{matrix}
    Q  \\ B          
   \end{matrix}\right]}
    \xhookr[2.6pc]_-{\flip(\iota)} 
   &
   {\left[ \begin{matrix}
    D \\ A        
   \end{matrix}\right]}
   \ar[r]_-{\quot_h} 
    & 
    {\left[ \begin{matrix}
    D /_{\! h} Q\\  A /B        
  \end{matrix}\right]}
  \ar[r]
    & 0
 }
 $$
 where $\quot_h$ and $\quot_v$ denote respectively the horizontal and vertical quotient projections.
 In particular, by a direct comparison of the homogeneity structure, one finds that $D /_{\! h} Q = \flip (\frac{D}{Q}{\! \scriptstyle v} )$ and $\flip_D/\flip_Q = \flip_{D/Q}$.
 More generally, the definition of the quotient flip also makes sense for general flip isomorphisms (definition~\ref{def.flip-map}) instead of the total flips.
\begin{remark}\label{rem.quot-deco}
    Observe that the family of vertical arrows in the diagram above define a morphism from a sequence of vertical vb-object to a sequence of horizontal vb-objects (or conversely). 
    For the sake of rigorousness, one would decorate the quotient flip as $\flip_D {}^{\scriptstyle h \!\!}/_{\! \scriptstyle v} \,\flip_Q$ in order to explicit the corresponding categories (conversely $\flip_D {}^{\scriptstyle v \!\!}/_{\! \scriptstyle h} \,\flip_Q$). 
    Notice that the identity flip of a symmetric double vector bundle $(D,E,E,M)$ may be interpreted in the following way: it sends $D$, as an object of $\VB^h(E\!\rightarrow\! M)$, to $D$, as an object of $\VB^v(E\!\rightarrow\! M)$.
\end{remark}
 %
 %
\begin{prop}\label{prop.quot-flip}
  Consider two vector bundles $E_1 \rightarrow M_1$ and $E_2 \rightarrow M_2$.
  Let $\iota_1: P_1 \hookrightarrow D_1$ be an embedding in $\VB^v(E_1\! \rightarrow \! M_1)$, and $\iota_2: P_2 \hookrightarrow D_2$ be an embedding in $\VB^h(E_2 \!\rightarrow\! M_2)$. Assume that $(\varphi, \psi)$ defines a 2-dvb-map $\flip(\iota_1) \Rightarrow \iota_2$ such that $\varphi$ and $\phi$ have the same horizontal side-maps. 
  Then 
  $$
  (\varphi /_{\! \scriptstyle h} \, \psi) \circ (\flip_D / \flip_P) = (\varphi \circ \flip_D) / (\psi \circ \flip_P).
  $$
  Moreover, the same holds for arbitrary flip maps instead of the total flip.
\end{prop}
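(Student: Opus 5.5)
The plan is to derive the identity from the universal property of cokernels, namely uniqueness of the induced map out of a cokernel, exactly as in the proofs of Propositions~\ref{prop.quot-tgt} and~\ref{prop.vpb-hquot}. Both sides are maps $\frac{D_1}{P_1}{\!\scriptstyle v} \flipar D_2/_{\!h}P_2$. It suffices to show that each one, precomposed with the vertical quotient projection $\quot_v: D_1 \rightarrow \frac{D_1}{P_1}{\!\scriptstyle v}$, gives the same map $D_1 \rightarrow D_2/_{\!h}P_2$, namely $\quot_h \circ \varphi \circ \flip_{D_1}$. Since $\quot_v$ is a surjective submersion on total spaces (it is the cokernel projection in $\VB^v(E_1\!\rightarrow\!M_1)$), two smooth maps agreeing after precomposition with it coincide as maps of manifolds. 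Compatibility with the homogeneity structures is then automatic, because both sides are already known to be flip maps.

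The first step is to unwind the right-hand side. By definition (the general-flip version of \S\ref{par.quotient-flip}), $(\varphi\circ\flip_D)/(\psi\circ\flip_P)$ is the unique map $\chi$ with $\chi\circ\quot_v = \quot_h\circ(\varphi\circ\flip_{D_1})$. It is well defined because the square $(\varphi\circ\flip_D)\circ\iota_1 = \iota_2\circ(\psi\circ\flip_P)$ commutes, using $\flip(\iota_1)=\flip_D\circ\iota_1\circ\flip_P^{-1}$ and the hypothesis that $(\varphi,\psi)$ is a 2-dvb-map $\flip(\iota_1)\Rightarrow\iota_2$. The second step treats the left-hand side. The defining diagram of $\flip_D/\flip_P$ gives $(\flip_D/\flip_P)\circ\quot_v = \quot_h^{(1)}\circ\flip_{D_1}$, where $\quot_h^{(1)}: D_1\rightarrow D_1/_{\!h}P_1$. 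The defining diagram of the horizontal quotient of dvb-maps gives $(\varphi/_{\!h}\psi)\circ\quot_h^{(1)} = \quot_h^{(2)}\circ\varphi$. Composing these two identities yields $(\varphi/_{\!h}\psi)\circ(\flip_D/\flip_P)\circ\quot_v = \quot_h^{(2)}\circ\varphi\circ\flip_{D_1}$, which matches the identity for $\chi$. Uniqueness then gives the equality.

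The step I expect to need the most care is checking that every quotient written above actually lives in the right category. Specifically, $\varphi/_{\!h}\psi$ requires $\varphi$ and $\psi$ to share horizontal side-maps, which is exactly the stated hypothesis. The composite $\varphi\circ\flip_D$ goes from a vertical vb-object sequence to a horizontal one, so its quotient must be read with the decoration ${}^{h}\!/_{v}$ of Remark~\ref{rem.quot-deco}. I would check explicitly that this decorated quotient is still characterized by the same factorization through $\quot_v$ and $\quot_h$, which follows by composing the cokernel property in $\VB^v(E_1\!\rightarrow\!M_1)$ with the flip identification $D_1/_{\!h}P_1=\flip(\frac{D_1}{P_1}{\!\scriptstyle v})$. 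For arbitrary flip maps in place of the total flips, the same argument applies verbatim: the quotient of a flip map is again defined by the factorization through the quotient projections, and a flip map, being a restricted flip composed with a dvb-isomorphism, preserves the relevant embeddings.
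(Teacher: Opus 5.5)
Your proposal is correct and follows essentially the same route as the paper. The paper pastes the two cokernel squares vertically, uses the interchange law to see that $Q_2 \bullet Q_1 = (q_2,q_1)$ is a cokernel for $I_2 \bullet I_1$, and concludes by uniqueness that the comparison isomorphism between the two quotients is the identity; composing the two defining identities and using that $\quot_v$ is surjective, as you do, is that same uniqueness argument written out concretely.
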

 \begin{proof}
 Consider the concatenation of the given data, in the horizontalization of $\DVB_f^\square$ (the double category of commutative square in $\DVB_f$)
  $$
\xymatrix@C=2.5pc@R=1.8pc{
  {\left[ \begin{matrix}
    P_1 & B_1 \\ E_1 & M_1          
   \end{matrix}\right]}
   \flipard_-{\flip_Q}
   \ar[r]^-{\iota_1 }
   \ar@{}[rd]|{
    \rotatebox[origin]{0}{$\overset{I_1}{\Rightarrow}$}
    }
   &
   {\left[ \begin{matrix}
    D_1 & A_1 \\ E_1 & M_1          
   \end{matrix}\right]}
   \flipard^-{\flip_D} 
   \ar[r]^-{q_1} 
   \ar@{}[rd]|{
    \rotatebox[origin]{0}{$\overset{Q_1}{\Rightarrow}$}
    }
   &
   {\left[ \begin{matrix}
     \frac{D_1}{P_1}{\! \scriptstyle v}  &  \frac{A_1}{B_1} \\ E_1 & M_1          
   \end{matrix}\right]}
   \ar[d]^-{\flip_D/\flip_P }
    \ar[r]
    & 0
   \\
   {\left[ \begin{matrix}
    P_1 & E_1 \\ B_1 & M_1          
    \end{matrix}\right]}
    \ar[r]_-{\flip(\iota_1)} 
    \ar[d]_-{\psi}
    \ar@{}[rd]|{
    \rotatebox[origin]{0}{$\overset{I_2}{\Rightarrow}$}
    }
    &
    {\left[ \begin{matrix}
    D_1 & E_1 \\ A_1 & M_1          
    \end{matrix}\right]}
    \ar[d]^-{\varphi}
    \ar[r]_-{\flip(q_1)} 
    \ar@{}[rd]|{
    \rotatebox[origin]{0}{$\overset{Q_2}{\Rightarrow}$}
    }
    &
    {\left[ \begin{matrix}
    D_1 /_{\! h} \, P_1 & E_1 \\ A_1 /B_1 & M_1          
    \end{matrix}\right]}
    \ar[r]
    \ar[d]^-{\varphi /_{\!\scriptscriptstyle  h} \psi}
    & 0
     \\
   {\left[
    \begin{matrix}
      P_2 & E_2\\  B_2& M_2          
    \end{matrix}
   \right]}
    \ar[r]_-{\iota_2}          
  &
  {\left[ \begin{matrix}
    D_2 & E_2 \\  A_2 & M_2          
  \end{matrix}\right]}
  \ar[r]_-{q_2} 
    & 
    {\left[ \begin{matrix}
     D_2/_{\! \scriptstyle h}\, P_2 &E_2 \\  A_2 / B_2& M_2          
  \end{matrix}\right]}
  \ar[r]
    & 0
 }
 $$
By the interchange law,
$
  \left(
    Q_2 \bullet Q_1
  \right) \circ 
  \left( 
     I_2 \bullet I_1
  \right) =
   \left(
    Q_2 \circ I_2 
  \right)
  \bullet
  \left(
    Q_1 \circ I_1
  \right).
$
 In particular, $ Q_2 \bullet Q_1: \varphi \circ \flip_D \Rightarrow  (\varphi/_{\! \scriptstyle h} \, \psi) \circ  (\flip_D / \flip_P) $ is a cokernel for the vertical composition $I_2 \bullet I_1: \psi \circ \flip_P \Rightarrow \varphi \circ \flip_D$. 
 Thus, by universality of the cokernel, there is a unique isomorphism of (flipping) dvb-map
 $$
 \xymatrix@C=6pc{
    \displaystyle\frac{D_1}{P_1}{\! \scriptstyle v} 
    \ar@/^1pc/[r]^{(\varphi \, \circ \,\flip_D) / (\psi \,\circ \,\flip_P)} \ar@/^1pc/[r]|{\rule{0.8pt}{5pt}}
    \ar@{}[r]|{\rotatebox[origin=c]{-90}{$\Rightarrow$}}
    \ar@/_1pc/[r]_{(\varphi/_{\! \scriptscriptstyle h}\, \psi)  \circ (\flip_D/ \flip_P)}\ar@/_1pc/[r]|{\rule{0.8pt}{5pt}}
    &
    D_2/_{\! \scriptstyle h}\, P_2
 }
 $$
 But since $Q_2 \bullet Q_1 =  (q_2, q_1)$, the isomorphism between these cokernels should be the identity.
 The same argument works for the case of general flip maps.
\end{proof}
\begin{remark}
Let $\varepsilon: E_1 \rightarrow E_2$ be a vb-map.
Denote by $\VB_v^h(\varepsilon)$ the category whose objects are compositions $\varphi \circ \Theta$ of the shape
$$
\xymatrix{
  \VB^v(E_1 \!\rightarrow \!M_1)
  \ar@{..>}[r]^{\Theta}
  & \VB^h(E_1 \!\rightarrow \!M_1) 
  \ar@{..>}[r]^{\varphi}
  &\VB^h(E_2 \!\rightarrow\! M_2)
  }
$$
where $\Theta$ ranges over flip isomorphisms between vb-objects over $E_1 \!\rightarrow\! M_1$, $\varphi$ ranges over the morphism of horizontal vb-objects projecting to $\varepsilon$, and whose morphisms are given by morphisms of such sequences.
Then, the zero object in $\VB^h_v(\varepsilon)$ is precisely the composition 
$
E_1^v \overset{\flip}{\flipar} E_1^h \xrightarrow{\varepsilon^h} E_2^h.
$
\end{remark}
\subsection{Normal bundle of a vb-immersion}
Let $\varphi : E \looparrowright F$ be a vb-immersion over $j: M \looparrowright N$ (cf.~definition~\ref{def.vbimmersion}).
Then $\varphi_*: TE \rightarrow TF$ is a dvb-immersion, and the horizontal sharpening $\varphi_\sharp:  TE \rightarrow \varphi^{*,h} TF$ defines an embedding in the category $\VB^h(E\! \rightarrow \! M)$.
\begin{deff}\label{def.vbnormal}
    The \textbf{vb-normal bundle} $\nuup_\VB(\varphi)$ is defined as the following quotient in the category of $\VB^h(E\!\rightarrow\! M)$ (where $\quot_h$ is the quotient projection):
\begin{align*}
    \begin{split} 
        0 \xrightarrow{\hspace{1pc}}
        \left[
        \vcenter{
        \xymatrix@R=0pc@C=0.2pc{
            TE   \\ TM 
        }}    
        \right]
        \xrightarrow{ \quad 
        \left[\begin{smallmatrix}
         \varphi_\sharp  
        \\ j_\sharp  
        \end{smallmatrix}\right]
        \quad
        }
        \left[
        \vcenter{
        \xymatrix@R=0pc@C=0.2pc{
            \varphi^*TF              
            \\ j^*TN 
        } }
        \right] 
        \xrightarrow{\quad \quot_h \quad} 
        \left[
        \vcenter{
        \xymatrix@R=0pc@C=0.2pc{
            \nuup(\varphi)              
            \\ \nuup(j) 
        } }
        \right]
        \xrightarrow{\hspace{1pc}}
        0
        .
    \end{split}
\end{align*}
\end{deff}
The homogeneity structure of $\nuup_\VB(\varphi)$ is given by: for $\lambda \in \R$, $e\in E$ and $\eta \in T_{\varphi(e)}F$,
$$
\begin{array}{rcl}
    \delta^{\nuup(\varphi)}_{E} (\lambda, [e, \eta])
    &=&
    [e, \delta^{TF}_F(\lambda, \eta)]
    \\[0.4pc]
    \delta^{\nuup(\varphi)}_{\nuup(j)} (\lambda, [e, \eta])
    &=& 
    [\delta^E(\lambda, e) , \delta^{TF}_{TN}(\lambda, \eta)]
\end{array}
$$
Viewed as a commutative square of vector bundles, the vb-normal bundle writes as
\begin{align}\label{diag.vbnormal}
    \begin{split}
        \xymatrix@C=2pc@R=1.5pc{
        \nuup(\varphi)
        \ar[r]^-{\pi^{\nuup(\varphi)}}
        \ar[d]_-{\nuup^{(2)}(\underline{\smash \pi})}
        & E \ar[d]
        \\
        \nuup(j) 
        \ar[r]
        & M
        }
    \end{split}
\end{align}
The horizontal  projection $\nuup(\varphi) \rightarrow E$ is the usual projection of the normal bundle. 
The vertical projection  $\nuup(\varphi) \rightarrow \nuup(j)$ is the normal differential $\nuup^{(2)}(\underline{\smash \pi})$ where $\underline{\smash \pi}: \varphi \Rightarrow j$ is given the pair of bundle projection $(\pi_F, \pi_E)$.
In summary, the definition above provide us with an operation
$$
\begin{array}{rcl}
     \nuup_\VB: \quad \{ \text{vb-immersions} \} & \mapsto&  \{ \text{double vector bundles} \}.
\end{array}
$$
Usually, we just write $\nuup$ instead of $\nuup_{\VB}$.
\subsubsection{Example: the tangent normal bundle.}
 Let $j: M \rightarrow N$ be an immersion of smooth manifolds, in particular its tangent map $j_*: TM \rightarrow TN$ defines a vb-immersion over $j$ (cf.~definition~\ref{def.vbimmersion}).  
In such situation, there are two pertinent double vector bundles: first, the tangent bundle $T\nuup(j)$ of the normal bundle $\nuup(j)$ (left-side below), and secondly the vb-normal bundle $\nuup(j_*)$ of the vb-immersion $(j_*,j)$ (on the right-side):
 $$
\xymatrix@C=3pc@R=1.8pc{
T\nuup(j) 
\ar[d]_-{\pi^{\nuup(j)}_*} 
\ar[r]^-{\pi^{T\nuup(j)}_{\nuup(j)}}
&
\nuup(j) \ar[d]^-{\pi^{\nuup(j)}}
\\
 TM \ar[r]_-{\pi^{TM}} & M
}
\qquad \qquad 
\xymatrix@C=4pc{
\nuup(j_*) 
\ar[d]_-{\nuup^{(2)}(\underline{\pi})} 
\ar[r]^-{\pi^{\nuup(j_*)}_{TM}}
&
TM \ar[d]^-{\pi^{TM}}
\\
\nuup(j) \ar[r]_-{\pi^{\nuup(j)}}  & M
}
$$
The next result is a slight generalization of~\cite[Appendix A]{bursztyn-lima-meinrenken20019splitting}, the proof given here arises as a rather direct consequence of the formalism developed so far. 
\begin{prop}\label{prop-TN=NT}
There is a flip isomorphism
$\Upsilon^j: T \nuup(j) \longflipar \nuup(j_*)$. 
We will denote by $\Upsilon_j$ the inverse flip isomorphism: $\Upsilon^j=\Upsilon^{-1}_j$.
\end{prop}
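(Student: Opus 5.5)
We build the flip isomorphism $\Upsilon^j$ as a quotient flip map in the sense of \S\ref{par.quotient-flip}: both sides are quotients of matching double vector bundles, and the flip relating the numerators restricts to a flip relating the denominators.

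\textbf{Step 1: $T\nuup(j)$ as a vertical quotient.} Apply proposition~\ref{prop.quot-tgt} to the wide vb-embedding $j_\sharp: TM \hookrightarrow j^*TN$ over $M$. This gives a canonical dvb-isomorphism
$$T\nuup(j)=T\big(j^*TN/TM\big)\cong \frac{T(j^*TN)}{TTM}{\!\scriptstyle v},$$
where the quotient is taken in $\VB^v(TM\!\rightarrow\! M)$ along the dvb-embedding $(j_\sharp)_*$. Next use the natural isomorphism~(\ref{eq.vert-id}), $T(j^*TN)\cong (j_*)^{*,v}TTN$. Under it, $(j_\sharp)_*$ corresponds to the vertically-sharp double differential $j_{\sharp *}$ of example~\ref{ex.sharp-dvb-diff}. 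So $T\nuup(j)$ is the cokernel of $j_{\sharp*}: TTM\hookrightarrow T(j^*TN)$ in $\VB^v(TM\!\rightarrow\!M)$.

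\textbf{Step 2: $\nuup(j_*)$ as a horizontal quotient.} By definition~\ref{def.vbnormal} applied to $\varphi=j_*$, $\nuup(j_*)$ is the cokernel of the horizontal sharpening
$$j_{*\sharp}: TTM\hookrightarrow (j_*)^{*,h}TTN$$
in $\VB^h(TM\!\rightarrow\!M)$.

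\textbf{Step 3: compare the two sequences.} Proposition~\ref{prop.Phlip-map} gives $\Phi\circ j_{*\sharp}=j_{\sharp*}$, where $\Phi=\Phi_j:(j_*)^*TTN\flipar T(j^*TN)$ is the flip isomorphism~(\ref{Phlip}). Equivalently, $\Phi^j\circ j_{\sharp*}=j_{*\sharp}$.

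The first square of the diagram in \S\ref{par.quotient-flip} has the identity flip of the symmetric double vector bundle $TTM$ on the left (cf.~remark~\ref{rem.quot-deco}) and $\Phi^j$ on the right. This square commutes, and it carries the short exact sequence of vertical vb-objects over $TM\!\rightarrow\!M$ to the short exact sequence of horizontal vb-objects over $TM\!\rightarrow\!M$. The universal property of the cokernel then yields a unique quotient flip
$$\Upsilon^j:=\Phi^j/\id_{TTM}:\ \frac{T(j^*TN)}{TTM}{\!\scriptstyle v}\ \longflipar\ \nuup(j_*).$$
It is an isomorphism because the quotient of inverse flips, $\Phi_j/\id$, supplies the inverse (again by uniqueness). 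Composing with the isomorphism of Step 1 defines $\Upsilon^j: T\nuup(j)\flipar\nuup(j_*)$, and we set $\Upsilon_j$ to be its inverse.

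\textbf{Main obstacle.} The delicate point is Step 3. One must check that \S\ref{par.quotient-flip}, stated for total flips, applies to the general flip isomorphism $\Phi^j$, which is the restriction of the partial flip $\flip\times\id$ on $TM^v\times TTN$ (definition~\ref{def.flip-map}). Concretely, one must verify that the side bundles match: both quotients have side bundles $TM$ and $\nuup(j)$, with the roles of horizontal and vertical exchanged. One must also verify that $\Phi^j$ restricts correctly on side bundles, namely the identity on $TM$ and the identity on $j^*TN$, so that it induces the identity on $\nuup(j)=j^*TN/TM$. I would check both facts directly on the explicit descriptions of the homogeneity structures of the pullback and quotient, as in the proof of lemma~\ref{lem.flip}.
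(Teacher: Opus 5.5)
Your proposal is correct and follows essentially the same route as the paper. You identify $T\nuup(j)$ with the vertical quotient $\frac{T(j^*TN)}{TTM}{\!\scriptstyle v}$ via proposition~\ref{prop.quot-tgt}, use proposition~\ref{prop.Phlip-map} to see that $\Phi$ intertwines $j_{*\sharp}$ and $j_{\sharp*}$ over the identity flip of $TTM$, and take the induced quotient flip. The only cosmetic difference is direction: the paper forms $\Phi/\id$ from $\nuup(j_*)$ to the vertical quotient, while you form its inverse $\Phi^j/\id$ directly, which is the version that reappears in the proof of lemma~\ref{lem.sq-flip}.
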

\begin{proof}
    The expected flip map is obtained from the quotient flip $\Phi/\id$:  
    $$
    \xymatrix@R=1.5pc@C=2.5pc{
        0 \ar[r] 
        & TTM \ar[r]^-{j_{*\sharp}} \flipard_-{\id} 
        & (j_*)^*TTN \ar[r]^-{\quot_h} \flipard^-{\Phi}
        & (j_*)^*TTN \big/_{\!\! \scriptstyle h}\, TTM 
        \flipard^-{\Phi / \id} \ar[r] 
        & 0
        \\
        0 \ar[r]
        & TTM \ar[r]_-{j_{\sharp *}}
        & T(j^*TN) \ar[r]_-{\quot_v}
        & \frac{T(j^*TN)}{TTM}{\! \scriptstyle v}
        \ar[r] & 0
    }
    $$
    together with the dvb-isomorphism $ T\nuup(j)\cong \frac{T(j^*TN)}{TTM}{\! \scriptstyle v}$ provided by proposition~\ref{prop.quot-tgt}.
\end{proof}
\begin{remark}
    According to example~\ref{Phi-as-pb}, the flip isomorphism $\Phi/\id$ above  identifies canonically with $(j_*)^{*,\htov}\id_{TTN} / \id_{TTM}$.
\end{remark}
%
%
%

\section{The symmetry theorem}\label{sec.symm-thm}

\subsection{Immersions squares.}
Consider a commutative square consisting only of immersions of smooth manifolds, in other words a double morphism in $\Immer^{\square}$, 
\begin{align}\label{2-immersion}
\begin{split}
  \xymatrix@R=1.5pc@C=1.5pc{
    M_1 \xloopr[0.8pc]^-{i_1} 
    \ar@{}[rd]|-{(I,J)}
    %
    %
    \xloopd[0.8pc]_-{j_1}
    & 
    M_2
    %
    \xloopd[0.8pc]^-{j_2}
    \\
    N_1 
    \xloopr[0.8pc]_-{i_2}  
    & N_2
} 
\end{split}
\end{align}
Recall the two interpretations of the previous square as a 2-immersion of immersions:  vertically, $J: i_1 \Rightarrow i_2$, and horizontally $I: j_1 \Rightarrow j_2$.
Such diagram will be called \textbf{immersion square} and denoted by $(I,J)$.
\begin{prop}\label{prop-2immer}
    Let $(I,J)$ be an immersion square as in~(\ref{2-immersion}). 
    The following assertions are equivalent:
    \begin{enumerate}
        \setlength\itemsep{0pc}
        \item[$(1)$] The normal differential $\nuup^{(2)}(J): \nuup(i_1) \rightarrow \nuup(i_2)$ is a vb-immersion.
        \item[$(2)$] The normal differential $\nuup^{(2)}(I): \nuup(j_1) \rightarrow \nuup(j_2)$ is a vb-immersion.
        \item[$(3)$] For all $m\in M_1$, the following sequence of vector spaces is exact: 
        \begin{align*}
        \xymatrix@C=2.5pc{
            0 \ar[r] & T_mM_1 
            \ar[r]^-{\dtimes{j_{1*}}{i_{1*}}}
            &
            T_{j_1(m)}N_1 \times T_{i_1(m)}M_2 
            \ar[r]^-{i_{2*} - j_{2*}}
            &
            T_{n}N_2. 
        }
        \end{align*}
        where $n= i_2\circ j_1(m) = j_2\circ i_1(m)$.
    \end{enumerate}
\end{prop}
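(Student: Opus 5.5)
Since condition $(3)$ is symmetric in the roles of $(i_1,i_2)$ and $(j_1,j_2)$, it suffices to prove $(1)\Leftrightarrow(3)$; the equivalence $(2)\Leftrightarrow(3)$ then follows by exchanging the two directions of the square, that is by applying the same argument to the flipped square. The base map of $\nuup^{(2)}(J)$ is $j_1$, which is an immersion by hypothesis. By Definition~\ref{def.vbimmersion}, $(1)$ therefore amounts to fiberwise injectivity of $\nuup^{(2)}(J)$ at every $m\in M_1$.

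Fix $m\in M_1$ and set $n = i_2 j_1(m) = j_2 i_1(m)$. By Definition~\ref{def-normaldiff} and the explicit formula for quotient vb-maps (\S\ref{par.quotvbmap}), the fiber map is
$$
\nuup^{(2)}(J)_m : \frac{T_{i_1(m)}M_2}{i_{1*}T_mM_1} \longrightarrow \frac{T_nN_2}{i_{2*}T_{j_1(m)}N_1}, \qquad [v]\mapsto [j_{2*}v].
$$
It is injective exactly when every $v\in T_{i_1(m)}M_2$ with $j_{2*}v\in \range(i_{2*})$ already lies in $\range(i_{1*})$.

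Now compare with $(3)$. Injectivity of $\dtimes{j_{1*}}{i_{1*}}$ is automatic, since $i_1$ is an immersion. The composite of the two arrows vanishes by commutativity of the square (after fixing the sign convention $(w,v)\mapsto i_{2*}w - j_{2*}v$). So $(3)$ reduces to the inclusion $\ker(i_{2*}-j_{2*})\subseteq \range\dtimes{j_{1*}}{i_{1*}}$.

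For $(3)\Rightarrow(1)$: suppose $j_{2*}v = i_{2*}w$. Then $(w,v)\in\ker$, so $(w,v) = (j_{1*}u, i_{1*}u)$ for some $u$, hence $v\in\range(i_{1*})$. For $(1)\Rightarrow(3)$: given $(w,v)$ in the kernel, injectivity yields $v = i_{1*}u$. Then $i_{2*}w = j_{2*}i_{1*}u = i_{2*}j_{1*}u$, and injectivity of $i_{2*}$ (as $i_2$ is an immersion) gives $w = j_{1*}u$.

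The only delicate point is bookkeeping: identifying the fiber of $\nuup^{(2)}(J) = J^*f_{2*}/f_{1*}$ (in the paper's notation, with the flip of conventions) with the map $[v]\mapsto[j_{2*}v]$ above, and checking that "vb-immersion" needs nothing beyond fiberwise injectivity over an immersive base map. No constant-rank issue arises, because the statement is pointwise.
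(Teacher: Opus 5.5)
Your proof is correct and follows the paper's argument. Like the paper, you reduce by symmetry to $(1)\Leftrightarrow(3)$, identify fiberwise injectivity of $\nuup^{(2)}(J)$ at $m$ with the condition that $j_{2*}v\in\range(i_{2*})$ forces $v\in\range(i_{1*})$, and chase elements in both directions; your use of injectivity of $i_{2*}$ to get $w=j_{1*}u$ for an arbitrary kernel element $(w,v)$ is a step the paper's $(1)\Rightarrow(3)$ leaves implicit (it simply sets $\gamma=j_{1*}(\varepsilon)$), so your version is slightly more complete.
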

\begin{deff}
 An immersion square $(I,J)$ is called \textbf{regular} if it satisfies the conditions of proposition~\ref{prop-2immer}.
\end{deff}

\begin{remark} 
   The condition (3) reminds the "good pair condition" for the existence of fiber product of~\cite[appendix A]{bursztyn-cabrera-delhoyo-2016}. In particular, when $N_1$ and $M_2$ are embedded submanifolds of $N_2$ with clean intersection $M_1 = M_2 \cap N_1$, the condition (3) holds and $M_1$ is  the fiber product $M_2 \times_{N_2} N_1$. 
    In that clean intersection situation, the normal differential is a vb-embedding~\cite[Lemma 3.2.1]{pikethesis}. In terms of vector bundle, the exact sequence of condition (3) above reads as
    \begin{align*}
        \xymatrix@C=3pc{
            0 \ar[r] & TM_1 
            \ar[r]^-{\dtimes{j_{1*}}{i_{1*}}}
            &
            j_1^*TN_1 \oplus i_1^*TM_2 
            \ar[r]^-{i_{2*} - j_{2*}}
            &
            (I,J)^*TN_2
        }
    \end{align*}
        where $(I,J)^*TN_2$ refers to the pullback vector bundle $(j_2 \circ i_1)^*TN_2 = (i_2 \circ j_1)^*TN_2$.
\end{remark}
\begin{proof}[Proof of proposition~\ref{prop-2immer}.]
    By symmetry, we just need to show $(1)\Leftrightarrow (3)$.
    \par
    $(1) \Rightarrow (3)$: 
    Let $\delta \in T_{i_1(m)} M_2$ such that 
    $
    j_{2*}(\delta)$ belongs to $ i_{2*}(T_{j_1(m)} N_1)$, or equivalently $[\delta] \in \ker \nuup^{(2)}(J)$.
    Since $\nuup^{(2)}(J)$ is a vb-immersion, one must have $\delta = i_{1*}(\varepsilon)$ for some unique $\varepsilon$ in $T_m M_1$. 
    Put $\gamma= j_{1*}(\varepsilon)$ in $T_{j_1(m)} N_1$, hence $j_{2*}(\delta) = i_{2*}(\gamma)$.
    In particular, $(\delta,\gamma)$ lies inside $\range \dtimes{j_{1*}}{i_{1*}}$.
    \par
    $(3) \Rightarrow (1)$:
    Let $\delta \in T_{i_1(m)} M_2$ such that $[\delta] \in \ker \nuup^{(2)}(J)$, then  $j_{2*}(\delta) \in i_{2*}(TN_1)$. Let $\gamma \in T_{j_1(m)}N_1$ such that $j_{2*}(\delta)=i_{2*}(\gamma)$. By exactness, $(\gamma,\delta)$ belongs to $\range\dtimes{j_{1*}}{i_{1*}}$, thus $[\delta]=0$ in $\ker \nuup^{(2)}(J)$.
\end{proof}
\begin{remark}
    The inclusion $\range\dtimes{j_{1*}}{i_{1*}} \subseteq \ker(i_{2*} - j_{2*})$ always holds (direct consequence of the commutativity of the square~(\ref{2-immersion})), but the reverse inclusion is not automatic. For example, consider the following situation:
    $$
    \xymatrix@R=1.4pc@C=1.4pc{
    \R \xhookr[1pc] \xhookd[1pc] & \R^3 \xhookd[1pc]
    \\
    \R^5 \xhookr[1pc] & \R^6
    }
    $$
    where all the maps are the inclusions of the corresponding first coordinates. Then, applying the normal functor, say, on the horizontal 2-map returns the following vb-map
    $$
    \nuup(\R^5, \R^4) = \R^4 \times \R \rightarrow 
    \nuup(\R^6, \R^3) = \R^3 \times \R^3
    $$
    which cannot be a vb-immersion due to a dimension issue.
\end{remark}
As an application of the previous proposition, we obtain
\begin{cor}\label{cor-monosquare-specialcase}
    The following immersion square is regular:
    \begin{align}
    \begin{split}
  \xymatrix@R=1.4pc@C=1.4pc{
    M \xhookr[0.9pc]^-{j} 
    %
    %
    \xhookd[0.9pc]_-{\id\:}
    & 
    N
    %
    \xhookd[0.9pc]^-{\: i}
    \\
    M 
    \xhookr[0.9pc]_-{i \circ j}  
    & P
    } 
    \end{split}
    \end{align}
     In particular $\nuup_\VB \circ \nuup^{(2)}(j \circ i, j)$ is a well-defined double vector bundle.
\end{cor}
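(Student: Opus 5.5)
The natural route is through criterion $(3)$ of proposition~\ref{prop-2immer}. In the square at hand, the top arrow is $j: M \hookrightarrow N$, the left arrow is $\id_M$, the right arrow is $i: N \hookrightarrow P$ and the bottom arrow is $i\circ j$. So in the notation of~(\ref{2-immersion}) we have $i_1 = j$, $j_1 = \id_M$, $j_2 = i$ and $i_2 = i\circ j$. Commutativity is immediate, and all four maps are immersions because composites of immersions are immersions. For $m \in M$, condition $(3)$ asks for exactness of
$$
0 \longrightarrow T_mM \xrightarrow{\dtimes{\id}{j_*}} T_mM \times T_{j(m)}N \xrightarrow{(i\circ j)_* - i_*} T_{i(j(m))}P .
$$

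The first step is injectivity on the left. This holds because the first component of $\dtimes{\id}{j_*}$ is the identity.

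The second step is exactness in the middle. Let $(u,v) \in T_mM \times T_{j(m)}N$ with $i_*(j_*u) = i_*v$. Since $i$ is an immersion, $i_{*,j(m)}$ is injective, hence $v = j_*u$. Therefore $(u,v) = \dtimes{\id}{j_*}(u)$ lies in the image. The reverse inclusion follows from commutativity, as already observed in the remark following the proof of proposition~\ref{prop-2immer}.

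By proposition~\ref{prop-2immer}, the square is regular. Consequently both normal differentials are vb-immersions. In particular, the normal differential of the 2-map $(i\circ j, j)$, viewed from $\nuup(\id_M) = M$ to $\nuup(i)$ (or, in the other reading direction, from $\nuup(j)$ to $\nuup(i\circ j)$), is a vb-immersion. Definition~\ref{def.vbnormal} then applies to it and produces the double vector bundle $\nuup_\VB\circ\nuup^{(2)}(j\circ i, j)$.

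I expect no step to be a real obstacle. The only care needed is bookkeeping: matching the labels of the square to those of~(\ref{2-immersion}), and interpreting the statement's notation $(j\circ i, j)$ as the relevant 2-map given by the pair (bottom, top) or (right, left). Since regularity gives both directions simultaneously, the conclusion holds whichever reading is intended.
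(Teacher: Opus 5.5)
Your proof is correct and is essentially the paper's argument: there the corollary is simply obtained ``as an application of'' Proposition~\ref{prop-2immer}, and your check of condition $(3)$, which only uses injectivity of $i_*$, is exactly the verification needed. As a minor remark, condition $(2)$ gives the result with no computation at all: $\nuup(\id_M)$ is the zero bundle over $M$, so $\nuup^{(2)}(I)$ is trivially a vb-immersion covering the immersion $j$.
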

\begin{remark}
    It seems more tedious to prove directly that the induced map $\nuup(j) \rightarrow \nuup(i \circ j)$ is a vb-immersion. In order to do this, one may use the identification $\nuup(i \circ j) \cong j^*\nuup(i) \oplus \nuup(j)$ and attempt to match this normal differential with the inclusion of the $\nuup(j)$-component.
\end{remark}
%
%
%


\subsection{Statement and proof of the main theorem}
The time of the harvest has come.
In the present section, we prove our main result using the formalism developed so far.
\begin{theorem}[Symmetry theorem]\label{thm.main}
    Let $(I,J)$ be a regular immersion square, then there exists a canonical flip isomorphism
    $$
    \nuup_\VB \circ \nuup^{(2)}(J) \flipisomvar \nuup_\VB \circ \nuup^{(2)}(I).
    $$
\end{theorem}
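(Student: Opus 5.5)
The plan is to write both sides as iterated quotients of a single "double pullback" of $TTN_2$ and to compare them by a quotient flip, as in proposition~\ref{prop-TN=NT}.

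\textbf{Unwinding the two sides.} Set $\varphi:=\nuup^{(2)}(J)=J^*i_{2*}/i_{1*}$, a vb-immersion $\nuup(i_1)\looparrowright\nuup(i_2)$ over $j_1$. By definition~\ref{def.vbnormal}, $\nuup_\VB(\varphi)$ is the horizontal quotient of $\varphi^{*,h}T\nuup(i_2)$ by $T\nuup(i_1)$ along $\varphi_\sharp$. Using proposition~\ref{prop.quot-tgt}, rewrite $T\nuup(i_\ell)\cong \frac{T(i_\ell^*TM_2\text{ or }TN_2)}{TTM_\ell}{\scriptstyle v}$, i.e.\ $T\nuup(i_1)\cong\frac{T(i_1^*TM_2)}{TM_1\cdots}{\scriptstyle v}$ and similarly for $i_2$. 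Then use proposition~\ref{prop.vpb-hquot} (in its converse form: horizontal side-pullback commutes with vertical quotient) to write
\[
\varphi^{*,h}T\nuup(i_2)\cong \frac{(J^*i_{2*})^{*,h}T(i_2^*TN_2)}{(j_{1*})^{*,h}T(TN_1)}{\scriptstyle v}.
\]
Hence $\nuup_\VB\circ\nuup^{(2)}(J)$ becomes a double quotient $\bigl(\mathcal D /_{v}\,\mathcal A\bigr)/_{h}\,\mathcal B$, where $\mathcal D$ is a side-pullback of $TTN_2$ to $M_1$ (over $TN_1$ and $TM_2$), and $\mathcal A,\mathcal B$ are the corresponding pullbacks of $TTN_1$ and $TTM_2$. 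Symmetrically, $\nuup_\VB\circ\nuup^{(2)}(I)$ is the same expression with $h$ and $v$ exchanged and the roles of $N_1,M_2$ swapped. Both sides contain $TTM_1$ as the innermost piece.

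\textbf{Comparing the pieces.} The core step is to produce flip isomorphisms between the corresponding pieces: $\mathcal D\flipar\mathcal D'$, $\mathcal A\flipar\mathcal A'$, $\mathcal B\flipar\mathcal B'$. These should be built as pullback flips (definition~\ref{def.flip-pb}) of $\id_{TTN_2}$, $\id_{TTN_1}$, $\id_{TTM_2}$, in the spirit of example~\ref{Phi-as-pb} and of $\Phi$ in~(\ref{Phlip}). One then checks that they intertwine the embeddings. This is an extension of proposition~\ref{prop.Phlip-map}: it applies to the sharp double differentials of $i_1,j_1,i_2,j_2$, and the argument reduces again to $(\flip\times\id)\circ\dtimes{\underline\pi^h}{f_{**}}=\dtimes{\underline\pi^v}{f_{**}}$ restricted to the fibre products.

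Regularity (proposition~\ref{prop-2immer}(3)) is used here. It guarantees that the relevant cospans are good, so the side-pullbacks are genuine double vector bundles. It also guarantees that the combined map $TTM_1\to\mathcal A\times_{\mathcal D}\mathcal B$ fits into exact sequences, so that the sum of $\mathcal A$ and $\mathcal B$ inside $\mathcal D$, with intersection $TTM_1$, has constant rank. Quotienting in either order therefore yields $\mathcal D$ modulo (the image of $\mathcal A+\mathcal B$).

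\textbf{Passing to quotients.} Apply the quotient-flip construction of \S\ref{par.quotient-flip} twice, once at each level, and use proposition~\ref{prop.quot-flip} for compatibility of flips with quotient maps. This yields a flip isomorphism from $(\mathcal D/_v\mathcal A)/_h\mathcal B$ to $(\mathcal D'/_h\mathcal A')/_v\mathcal B'$. Canonicity follows from the uniqueness of cokernel comparison maps, as in the proofs of propositions~\ref{prop.vpb-hquot} and~\ref{prop.quot-flip}.

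\textbf{Main obstacle.} The hard step is the "double quotient" identification. We must show that the iterated quotient, taken in $\VB^v$ and then in $\VB^h$ over different side bundles, agrees with a quotient taken the other way. This amounts to a $3\times3$-lemma for vb-objects, in which the categories change ($\VB^h(\nuup(i_1)\to M_1)$ versus $\VB^v(\cdot)$) and the flips mediate between them. I would attempt it by writing a $3\times 3$ diagram of short exact sequences of $2$-dvb-maps. Its rows would be the $J$-side sequences and its columns the $I$-side sequences. I would then invoke universality of cokernels row-wise and column-wise, and exactness at the middle would be exactly proposition~\ref{prop-2immer}(3).
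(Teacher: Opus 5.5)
This is essentially the paper's proof. The node-wise flips are $\Phi_{j_1}$, $\Phi^{i_1}$ and $\Psi_{j_2}$, which are pullback flips of identities made compatible with the sharpenings by lemma~\ref{lem.dvbmap} and proposition~\ref{prop.Phlip-map}. The two levels of quotient flips are $\Theta_J=\Psi_{j_2}/\Phi_{j_1}$ and $\Upsilon^{i_1}=\Phi^{i_1}/\id$, followed by $\Theta_J/\Upsilon^{i_1}$, justified by lemma~\ref{lem.sq-flip}. The order-swapping double quotient you single out as the main obstacle is exactly lemma~\ref{lem.3x3}, which identifies the codomain $\frac{\nuup(J^*i_{2*})}{\nuup(i_{1*})}{\scriptstyle v}$ with $\nuup\circ\nuup^{(2)}(I)$.

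Two things to tighten. First, with the paper's conventions $\nuup^{(2)}(J)=I^*j_{2*}/j_{1*}$ (your $J^*i_{2*}/i_{1*}$ is $\nuup^{(2)}(I)$), so the middle object should be $(I^*j_{2*})^{*,h}T(i_2^*TN_2)$. Second, your unwinding must match not only the objects but also the map $\nuup^{(2)}(J)_\sharp$ with the induced quotient $\frac{(I^*j_{2*})_\sharp}{j_{1*\sharp}}{\scriptstyle v}$; the paper supplies this by a chain of identifications in its proof of lemma~\ref{lem.3x3}.
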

%
As mentionned in the introduction, the latter flip map will be obtained by a double quotient procedure. More precisely, our approach relies on interpreting both double normal bundles $\nuup \circ \nuup^{(2)}(J)$ and $\nuup \circ \nuup^{(2)}(J)$ as double quotient themselves, arising from particular $3\times 3$ exact diagrams. Then, a flip isomorphism is obtained for each (but one) node of the latter diagram, some of which have already been discussed. Finally, their mutual compatibilities allows to consider the wanted double quotient. 
\begin{lem}\label{lem-pb}
Let $(I,J)$ be an immersion square as in~(\ref{2-immersion}), then there is a canonical dvb-isomorphism
\begin{align}\label{eq.lem-pb}
    (I^*j_{2*})^{*,h} (i_{2*})^{*,v} TTN_2 \cong 
    (J^*i_{2*})^{*,v} (j_{2*})^{*,h} TTN_2.
\end{align}
\end{lem}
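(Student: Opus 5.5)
The approach is to realize both sides of~(\ref{eq.lem-pb}) as the same double vector sub-bundle of one ambient direct product, defined by one limit (iterated fiber product) of the cospans involved. The homogeneity structures are restrictions of the same product actions. The isomorphism is then the identity on that common model, composed with the natural isomorphisms identifying each side-pullback with a fiber product (remark~\ref{rem.natural}).

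\textbf{Unwinding the two sides.} Since $I=(i_2,i_1)\colon j_1\Rightarrow j_2$, the pullback $I^*j_{2*}$ is a vb-map $i_1^*TM_2 \to i_2^*TN_2$ covering $j_1$. By example~\ref{ex.sidepb-dbtg}, $(i_{2*})^{*,v}TTN_2 \cong T(i_2^*TN_2)$ is a double vector bundle with horizontal side $i_2^*TN_2$ and vertical side $TN_1$. So the left-hand side has sides $i_1^*TM_2$ (horizontal) and $j_1^*TN_1$ (vertical), over $M_1$. Symmetrically, $J^*i_{2*}\colon j_1^*TN_1\to j_2^*TN_2$ covers $i_1$, and $(j_{2*})^{*,h}TTN_2$ has sides $TM_2$ and $j_2^*TN_2$. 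Hence the right-hand side has the same side bundles $i_1^*TM_2$ and $j_1^*TN_1$ over $M_1$.

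\textbf{Common model.} Writing out the definitions of \S2.2 and using the canonical identifications $M\times_{\id}E\cong E$ repeatedly, both total spaces identify with the set of tuples $(m,a,b,\xi)\in M_1\times TM_2\times TN_1\times TTN_2$ such that
\begin{itemize}
\item $a\in T_{i_1(m)}M_2$ and $b\in T_{j_1(m)}N_1$;
\item $\pi^h(\xi)=j_{2*}a$ and $\pi^v(\xi)=i_{2*}b$.
\end{itemize}
Equivalently, this is the fiber product of the cospan
\[
(TM_2\times_{M_2} M_1)^h\times (TN_1\times_{N_1}M_1)^v \rightarrow TN_2^h\times TN_2^v \leftarrow TTN_2,
\]
where the right arrow is $\dtimes{\underline{\pi}^h}{\underline{\pi}^v}$ and the left arrow is the product of $j_{2*}$ and $i_{2*}$. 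The model sits inside the direct product $M_1\times(i_1^*TM_2)^h\times(j_1^*TN_1)^v\times TTN_2$.

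On the left-hand side, the vertical pullback contributes the factor $TN_1^v$ and the action $\delta^{TN_1}\times\delta^h$ on $TTN_2$. The horizontal pullback then contributes $(i_1^*TM_2)^h$, and the vertical action is multiplied on the $i_1^*TM_2$ factor. The right-hand side yields the same pair of product actions, obtained in the opposite order. So both homogeneity structures are the restriction of the single product structure
\[
(\delta^{TM_2}\times\id\times\delta^h_{TTN_2},\ \id\times\delta^{TN_1}\times\delta^v_{TTN_2}).
\]
Identifying $TN_1\times_{TN_2}TTN_2$ with $T(i_2^*TN_2)$ needs some care here, but it is exactly~(\ref{eq.vert-id}), which is compatible with both structures.

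\textbf{Checking the isomorphism and canonicity.} The identification of underlying manifolds is the canonical reassociation of iterated fiber products, $A\times_X(B\times_Y C)\cong(A\times_X B')\times_{Y'}C$, which holds since limits commute. It is a diffeomorphism because both are embedded submanifolds of the same ambient product, by proposition~\ref{prop.dvb-fp}. Each cospan is good, since its right leg $\underline{\pi}^h$ or $\underline{\pi}^v$ is a dvb-submersion. The map is equivariant for both actions because both are restrictions of the same ambient actions, so it is a dvb-isomorphism. Canonicity holds because it is characterized by compatibility with the projections (legs) onto $i_1^*TM_2$, $j_1^*TN_1$ and $TTN_2$.

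I expect the main obstacle to be bookkeeping rather than substance. One must verify that the base points and the side bundles $i_1^*TM_2$ and $j_1^*TN_1$ produced by the two orders of pullback are literally the same subsets, using commutativity $j_2 i_1=i_2 j_1$. One must also check that the vertical identification~(\ref{eq.vert-id}) introduces no hidden flip. I would settle both by writing out the embedding formulas of \S2.2 pointwise.
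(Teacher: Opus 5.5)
Your proposal is correct and is essentially the paper's own proof: the paper also writes both sides as iterated fiber products built from $M_1$, $TM_2$, $TN_1$, $TTN_2$, identifies them by the swap $TM_2\times TN_1\cong TN_1\times TM_2$ (your reassociation of limits), and then compares the restricted product homogeneity structures directly. There are two small slips to fix. First, your displayed ambient structure interchanges $\delta^{TM_2}$ and $\delta^{TN_1}$: on $(a,b,\xi)$ it must be $(\id\times\delta^{TN_1}\times\delta^h,\ \delta^{TM_2}\times\id\times\delta^v)$, which is what your factors $(i_1^*TM_2)^h\times(j_1^*TN_1)^v$ and your verbal derivation already give, whereas the displayed version would violate $\pi^h(\xi)=j_{2*}a$. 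Second, in your ``equivalently'' cospan the product of $i_1^*TM_2$ and $j_1^*TN_1$ must be taken over $M_1$, since otherwise the two base points need not agree.
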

\begin{proof}
 The swap isomorphism $TM_2 \times TN_1 \cong TN_1 \times TM_2$ induces a diffeomorphism 
 $$ 
 \begin{array}{rcl}
    (I^*j_{2*})^{*,h} (i_{2*})^{*,v} TTN_2
    &=&
    \left(
        M_1 \underset{i_1,\pi}{\times} TM_2
    \right)
    \underset{j_1 \times_{j_2} j_{2*}, \pi \times \pi^h}{\times} 
    \left(
       TN_1 \underset{i_{2*},\pi^v}{\times} TTN_2
    \right)
    \\
    &\cong& 
    \left(
        M_1 \underset{j_1,\pi}{\times} TN_1
    \right)
    \underset{i_1 \times_{i_2} i_{2*}, \pi \times \pi^v}{\times} 
    \left(
       TM_2 \underset{j_{2*},\pi^h}{\times} TTN_2
    \right)
    \\
    &=&
    (J^*i_{2*})^{*,v} (j_{2*})^{*,h}TTN_2.
 \end{array}
 $$
By a direct comparison of the respective homogeneity structures, we conclude that the dvb-structures coincides. 
\end{proof}
%
%
%
%
%
%
\begin{lem}\label{lem.Pslip}
There is a flip isomorphism 
\begin{align}\label{Pslip}
  \Psi = \Psi_{j_2}: (I^* j_{2*})^* T(i_2^* TN_2) \flipisomvar (J^*i_{2*})^* T(j_2^* TN_2). 
\end{align}
\end{lem}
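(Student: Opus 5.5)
We build $\Psi$ as a composite of three isomorphisms: a flip of the type $\Phi$ on the left, the canonical dvb-isomorphism of lemma~\ref{lem-pb} in the middle, and the inverse of another $\Phi$-type flip on the right. By the convention~(\ref{eq.convention}), the source of~(\ref{Pslip}) is a horizontal side-pullback along the vb-map $I^*j_{2*}: j_1^*TN_1 \rightarrow i_2^*TN_2$ of the double vector bundle $T(i_2^*TN_2)$. Similarly, the target is a horizontal side-pullback along $J^*i_{2*}: i_1^*TM_2 \rightarrow j_2^*TN_2$ of $T(j_2^*TN_2)$.

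\textbf{Step 1: identify $T(i_2^*TN_2)$ with a vertical side-pullback.} By example~\ref{ex.sidepb-dbtg}, equation~(\ref{eq.vert-id}), there is a natural dvb-isomorphism $T(i_2^*TN_2)\cong (i_{2*})^{*,v}TTN_2$. Horizontal side-pullback is natural in the double vector bundle, since it is a restriction of a direct product. So this induces
$$(I^*j_{2*})^{*}T(i_2^*TN_2)\cong (I^*j_{2*})^{*,h}(i_{2*})^{*,v}TTN_2 .$$
Lemma~\ref{lem-pb} identifies the right-hand side with $(J^*i_{2*})^{*,v}(j_{2*})^{*,h}TTN_2$.

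\textbf{Step 2: return to the target.} On the target side we pass through the flip $\Phi_{j_2}:(j_{2*})^{*,h}TTN_2\flipar T(j_2^*TN_2)$ of~(\ref{Phlip}). This requires realizing $\Phi_{j_2}$ as the pullback flip $j_2^{*,\htov}\id_{TTN_2}$, as in example~\ref{Phi-as-pb}. We then pull it back once more along $J^*i_{2*}$ using definition~\ref{def.flip-pb} (applied, with the analogous convention, from vertical to horizontal pullbacks). This gives a flip isomorphism
$$(J^*i_{2*})^{*,v}(j_{2*})^{*,h}TTN_2 \flipar (J^*i_{2*})^{*,h}\,T(j_2^*TN_2).$$
Concretely, we restrict $\flip\times\flip\times\id$ on the ambient product $(i_1^*TM_2)^v\times TN_1^h\times TTN_2$, suitably reordered, to the appropriate fiber-product submanifold. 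Composing Step~1 with this flip yields $\Psi$.

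\textbf{Main obstacle.} The hardest part is the bookkeeping of homogeneity structures. We must show that the restricted partial flip actually carries the submanifold $(J^*i_{2*})^{*,v}(j_{2*})^{*,h}TTN_2$ onto $(J^*i_{2*})^{*,h}T(j_2^*TN_2)$, and that it intertwines the actions exactly as in definition~\ref{def.flip-map}. To check this, write both sides as subsets of $M_1\times TN_1\times TM_2\times TTN_2$, as in the proof of lemma~\ref{lem-pb}. Then compare the two pairs of product actions factor by factor:
\begin{itemize}
\item on the $TN_1$ and $TM_2$ factors they are swapped by the flip of the lifts, using remark~\ref{rem.dvb-lift};
\item on $TTN_2$ they are interchanged compatibly with $\Phi$, using proposition~\ref{prop.Phlip-map} and the flip lemma~\ref{lem.flip}.
\end{itemize}
Canonicity then follows because every map involved is a restriction of a canonical ambient map: swaps, identities, and lifted flips.
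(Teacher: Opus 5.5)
Your construction is essentially the paper's: composing (\ref{eq.vert-id}), Lemma~\ref{lem-pb} and the pulled-back flip $(J^*i_{2*})^{*,\vtoh}\Phi_{j_2}$ gives, on the ambient product, exactly the restriction of $\id\times(\mathrm{swap}\circ\flip)\times\id$ used in the paper's proof, and the paper records this same factorization in the remark right after the lemma. There are two corrections.

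First, you have swapped the domains: $I^*j_{2*}\colon i_1^*TM_2\to i_2^*TN_2$ and $J^*i_{2*}\colon j_1^*TN_1\to j_2^*TN_2$, so the Step~2 ambient is $(j_1^*TN_1)^v\times TM_2^h\times TTN_2$.

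Second, write points as $(m,w,v,\xi)\in M_1\times TN_1\times TM_2\times TTN_2$ in your subset check. The source conditions $i_{2*}w=\pi^v\xi$, $j_{2*}v=\pi^h\xi$ then become $i_{2*}w=\pi^h\xi$, $j_{2*}v=\pi^v\xi$ on the target. So the ``$\id$'' on the $TTN_2$ factor, both here and in $\Phi_{j_2}$, must be read as the canonical involution of $TTN_2$, not the literal identity.
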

\begin{proof}
 The desired flip map is induced by 
$$
TM_2^h \times TN_1^v \xrightarrow[\cong]{\flip} TM_2^v \times TN_1^h \xrightarrow[\cong]{\mathrm{swap}} TN_1^h \times TM_2^v 
$$
in the sense that $\Psi$ is the restriction-corestriction of the latter map, namely $\Psi$ makes the following square of double vector bundle commutative:
$$
\xymatrix@C=5pc@R=1.5pc{
   TM_1 \times (TM_2^h \times TN_1^v) \times TTN_2 \fliparr^-{\id \,\times (\mathrm{swap} \,\circ\, \flip ) \times \,\id} 
   & TM_1 \times (TN_1^v \times TM_2^h) \times TTN_2 
   \\
   (I^* j_{2*})^* T(i_2^* TN_2) \fliparr^-{\Psi} \xhooku[1pc] 
   & (J^*i_{2*})^* T(j_2^* TN_2) \xhooku[1pc]
}
$$
\end{proof}
\begin{remark}
 Observe that $\Psi_{j_2} \cong (J^*i_{2*})^{*, \vtoh} \Phi_{j_2}$, and that the lemma~\ref{lem-pb} together with lemma~\ref{lem.Pslip} provide the following  identification of flip isomorphisms:
 $$
 \xymatrix@C=8pc@R=1pc{
    (J^*i_{2*})^{*,v} (j_{2*})^{*,h}TTN_2 
    \fliparr^-{(J^*i_{2*})^{*, \varvtoh}(j_{2*})^{*,\varhtov} \id} 
    \ar@{}[d]|-{\rotatebox[origin=c]{90}{$\cong$}}
    & (J^*i_{2*})^{*,h} (j_{2*})^{*,v}TTN_2
      \ar@{}[d]|-{\rotatebox[origin=c]{90}{$\cong$}}
    \\
    (I^*j_{2*})^{*,h} (i_{2*})^{*,v}TTN_2 
    \fliparr_-{(I^*j_{2*})^{*, \varhtov}(i_{2*})^{*,\varvtoh} \id} 
    & (I^*j_{2*})^{*,v} (i_{2*})^{*,h}TTN_2
 }
 $$
\end{remark}
\begin{deff}
 Set $\Psi_{i_2}$ to be the flip map 
 $$(J^*i_{2*})^*T(j_2^*TN_2) \flipar (I^*j_{2*})^*T(i_2^*TN_2)$$ induced by the pullback flip $(I^*j_{2*})^{*, \varvtoh} \Phi_{i_2}$, and denote by $\Psi^{i_2} = \Psi^{-1}_{i_2}$ its inverse. 
\end{deff}
From the previous remark, $\Psi^{i_2} = \Psi_{j_2}$.
%
%
%
\begin{lem}\label{lem2x2}
    Let $(I,J)$ be a regular immersion square as in~(\ref{2-immersion}), then there is a canonical dvb-isomorphism
    $$
    \nuup(I^*j_{2*}) \cong \nuup^{(2)}(I)^{*,v} \nuup(j_{2*}).
    $$
\end{lem}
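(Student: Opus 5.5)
We reduce the statement to Proposition~\ref{prop.vpb-hquot}, which says that vertical side-pullback commutes with horizontal quotient. First we record what each side is. The vb-map $I^*j_{2*}$ goes from $j_1^*TN_1$ to $i_2^*(j_2^*TN_2)$. Concretely, it is $TN_1$ pulled back to $M_1$, mapped into $TN_2$ pulled back along $i_2\circ j_1=j_2\circ i_1$, and it covers $j_1$. Since $j_{2*}$ is a vb-immersion, so is $I^*j_{2*}$ (example~\ref{ex.unbalanced}). Hence $\nuup(I^*j_{2*})$ is defined by Definition~\ref{def.vbnormal} as a horizontal quotient in $\VB^h(j_1^*TN_1\to M_1)$:
$$
0\to T(j_1^*TN_1)\xrightarrow{(I^*j_{2*})_\sharp}(I^*j_{2*})^{*,h}T(i_1^*j_2^*TN_2)\xrightarrow{\quot_h}\nuup(I^*j_{2*})\to 0 .
$$
On the other side, $\nuup(j_{2*})$ is the horizontal quotient of $(j_{2*})^{*,h}TTN_2$ by $TTM_2$ in $\VB^h(TM_2\to M_2)$. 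Its vertical side bundle is $\nuup(j_2)$, and $\nuup^{(2)}(I):\nuup(j_1)\to\nuup(j_2)$ is a vb-map into that vertical side bundle. So $\nuup^{(2)}(I)^{*,v}\nuup(j_{2*})$ makes sense, with horizontal side $i_1^*TM_2$ over $M_1$.

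The plan is to write $\nuup^{(2)}(I)=J^*i_{2*}/i_{1*}$ as a quotient vb-map, exactly as in Definition~\ref{def-normaldiff}. This is a quotient $\varphi/\psi$ for the 2-map of wide vb-embeddings $\Upsilon=(\varphi,\psi)$. Here $\psi$ is $(i_{1})_\sharp$-compatible, going from $TM_1$ (inside $j_1^*TN_1$) to $TM_2$ (inside $j_2^*TN_2$), and $\varphi=J^*i_{2*}$ goes from $j_1^*TN_1$ to $j_2^*TN_2$. Sharpening over the base map $i_1$ makes everything wide. Proposition~\ref{prop.vpb-hquot} then gives a canonical isomorphism
$$
\nuup^{(2)}(I)^{*,v}\big((j_{2*})^{*,h}TTN_2/_{\!h}TTM_2\big)\cong\big((J^*i_{2*})^{*,v}(j_{2*})^{*,h}TTN_2\big)/_{\!h}\big(i_{1*}^{*,v}TTM_2\big).
$$
It remains to identify the numerator and denominator on the right with those in the defining sequence of $\nuup(I^*j_{2*})$.

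For the numerator, Lemma~\ref{lem-pb} identifies $(J^*i_{2*})^{*,v}(j_{2*})^{*,h}TTN_2$ with $(I^*j_{2*})^{*,h}(i_{2*})^{*,v}TTN_2$. Example~\ref{ex.sidepb-dbtg} identifies $(i_{2*})^{*,v}TTN_2\cong T(i_2^*TN_2)$, and a further pullback to $M_1$ gives the middle term $(I^*j_{2*})^{*,h}T(\cdots)$ above. For the denominator, $(i_{1*})^{*,v}TTM_2\cong T(i_1^*TM_2)$ by the same example. However, the kernel in the defining sequence is $T(j_1^*TN_1)$, not $T(i_1^*TM_2)$.

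This is where regularity enters, and I expect it to be the main obstacle. We must check that the embedding $\Upsilon^*\iota$ produced by Proposition~\ref{prop.vpb-hquot} is actually taken in the right vb-object category. The horizontal side must be $j_1^*TN_1$, which the proposition produces as the pullback of $TM_2$ along $J^*i_{2*}$, i.e.\ $E=TM_2$ replaced by $f^*E$. Then, up to natural identifications, the subobject $\psi^{*,v}TTM_2$ must coincide with the image of $(I^*j_{2*})_\sharp$. I would verify this fibrewise using condition~(3) of Proposition~\ref{prop-2immer}. Exactness of $0\to TM_1\to j_1^*TN_1\oplus i_1^*TM_2\to TN_2$ says that the fibre product of $TN_1$ and $TM_2$ over $TN_2$ is exactly $TM_1$. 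Applying this to tangent vectors, i.e.\ at the level of $TT$, shows that the two sub-double-vector-bundles of the common ambient $TM_1\times TN_1^v\times TM_2^h\times TTN_2$ agree. The cleanest route is probably to compare both sides as cokernels of the same embedding in $\VB^h(j_1^*TN_1\to M_1)$ and invoke uniqueness of cokernels, as in the proof of Proposition~\ref{prop.vpb-hquot}. That gives the canonical dvb-isomorphism, compatible with the quotient legs.
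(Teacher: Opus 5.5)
Your reduction is the paper's own argument, and that part is correct: apply Proposition~\ref{prop.vpb-hquot} with $D=(j_{2*})^{*,h}TTN_2$, $Q=TTM_2$ and $\Upsilon=(J^*i_{2*},i_{1*})$, then rewrite the numerator with Lemma~\ref{lem-pb} and Example~\ref{ex.sidepb-dbtg}.

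The problem is that you misidentified $I^*j_{2*}$. By Example~\ref{ex.unbalanced} with $F=I=(i_2,i_1)$, it is the vb-immersion $i_1^*TM_2\to i_2^*TN_2$ covering $j_1$. The bundle $j_1^*TN_1$ is the domain of $J^*i_{2*}$, which covers $i_1$. Hence the defining sequence of $\nuup(I^*j_{2*})$ lives in $\VB^h(i_1^*TM_2\to M_1)$ and reads
\[
0\to T(i_1^*TM_2)\xrightarrow{(I^*j_{2*})_\sharp}(I^*j_{2*})^{*,h}T(i_2^*TN_2)\xrightarrow{\quot_h}\nuup(I^*j_{2*})\to 0 .
\]
It is not the sequence you wrote, with kernel $T(j_1^*TN_1)$ over $j_1^*TN_1$. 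Likewise, in the category $\VB^h(f^*E\to M_1)$ produced by Proposition~\ref{prop.vpb-hquot} one has $f=i_1$ and $E=TM_2$, so $f^*E=i_1^*TM_2$, not $j_1^*TN_1$. So the numerator $(I^*j_{2*})^{*,h}T(i_2^*TN_2)$ and denominator $(i_{1*})^{*,v}TTM_2\cong T(i_1^*TM_2)$ you obtained already match the definition. The ``main obstacle'' does not exist.

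Your final paragraph should therefore be dropped; as written it is unsound. No argument can identify $T(j_1^*TN_1)$ with $T(i_1^*TM_2)$: they have different dimensions as soon as $\dim N_1\neq\dim M_2$. Condition~(3) of Proposition~\ref{prop-2immer} has no bearing on this.

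In fact regularity is not used in this lemma at all. The vertical side-pullback along any vb-map exists, and $I^*j_{2*}$ is a vb-immersion for any immersion square. One compatibility genuinely remains: under these identifications, the pulled-back embedding $\Upsilon^*j_{2*\sharp}$ must correspond to $(I^*j_{2*})_\sharp$. This is just the restriction of the swap diffeomorphism of Lemma~\ref{lem-pb}, i.e.\ Lemma~\ref{lem.dvbmap0}(b), and again involves no regularity.
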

\begin{proof}
 By definition, 
 $$
    \nuup(I)^{*,v} \nuup(j_{2*}) 
    = \left(
    \frac{J^*i_{2*}}{i_{1*}}
    \right)^{*,v}
    \left(
    \frac{(j_{2*})^{*,h} TTN_2}{TTN_1}
    \!{\scriptstyle{h}} 
    \right)
 $$
 $$
  \nuup(I^* j_{2*}) 
    = \frac{(I^*j_{2*})^{*,h}(i_{2*})^{*,v} TTN_2}{(i_{1*})^{*,v}TTN_1}\!{\scriptstyle{h}}\, 
 $$
 Thus, the desired isomorphism follows from the compatibility of the vertical side-pullback  with the horizontal quotient of double vector bundle (proposition~\ref{prop.vpb-hquot}), together with the identification of  lemma~\ref{lem-pb}.
\end{proof}

%
%
%
Recall that the sharp differentials (example~\ref{ex.unbalanced}) of an immersion square $(I,J)$ are given by the following commutative square of vb-immersions:
\begin{align}
 \begin{split}
  \xymatrix{
    TM_1 
    \ar[r]^-{i_{1\sharp}} 
    \ar[d]_-{j_{1*}}
    \ar@{}[rd]|-{(I_\sharp, J_\flat)}
    & i_1^*TM_2 
    \ar[d]^-{I^* j_{2*}} 
    \\
    TN_1 \ar[r]_-{i_{2\sharp}} 
    & i_2^* TN_2
    }
    \qquad \qquad 
    \xymatrix{
    TM_1 
    \ar[r]^-{i_{1*}} 
    \ar[d]_-{j_{1\sharp}}
    \ar@{}[rd]|-{(I_\flat, J_\sharp)}
    & TM_2 
    \ar[d]^-{j_{2\sharp}} 
    \\
    j^*_1TN_1 \ar[r]_-{J^*i_{2*}} 
    & j_2^* TN_2
    }
 \end{split}
\end{align}
%
%
Moreover, the horizontal sharpening $(I^* j_{2*})_{\sharp,h}$ is, up to natural identifications, the horizontal side-pullback of the differential $(I^*j_{2*})_* : T(i_1^*TM_2) \rightarrow T(i_2^*TN_2)$ along the 2-map $\underline{\smash I^* j_{2*}} : \id_{i_1^*TM_2} \Rightarrow I^* j_{2*}$.
\begin{lem}\label{lem.dvbmap0}
There are canonical identifications:
\begin{itemize}
\setlength\itemsep{0pc}
 \item[(a)] $(I^* j_{2*})_* \cong (I_*)^{*,v}j_{2**}$
 \item[(b)] $(I^* j_{2*})_{\sharp,h} \cong (I_\sharp)^{*,v} j_{2*\sharp}$
\end{itemize}
\end{lem}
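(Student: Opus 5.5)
The plan is to prove (a) first and then deduce (b) by applying to (a) the same sharpening formalism that was used to define $f_{*\sharp}$ in example~\ref{ex.sharp-dvb-diff}.

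For (a), recall that $I^*j_{2*}$ is the fibre product vb-map $j_{1*}\times_{j_{2*}} (\ldots)$. Concretely, it is the restriction of $j_1\times j_{2*}$ to
$$
i_1^*TM_2 = M_1\times_{i_1,\pi} TM_2 \longrightarrow N_1\times_{i_2,\pi}TN_2 = i_2^*TN_2 .
$$
The tangent functor preserves good fibre products. Condition (i) in the definition of a good cospan gives $T(M_1\times_{M_2}TM_2)\cong TM_1\times_{TM_2}TTM_2$. Under this identification, $(I^*j_{2*})_*$ becomes the restriction of $j_{1*}\times j_{2**}$ to
$$
TM_1\times_{i_{1*},\pi^v}TTM_2 \longrightarrow TN_1\times_{i_{2*},\pi^v}TTN_2 .
$$
Here I use that $\pi_*=\pi^v$ on the double tangent bundle, as in example~\ref{ex.sidepb-dbtg}.

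The right-hand side is exactly the fibre product dvb-map $j_{1*}^v\times_{j_{2*}^v} j_{2**}$ associated to the cospan $(I_*^v,\underline{\smash\pi}^v)$. By the vertical analogue of remark~\ref{rem.natural}, this is $(I_*)^{*,v}j_{2**}$.

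It then remains to check that the homogeneity structures agree, which I would do componentwise.
\begin{itemize}
\item The horizontal structure on $T(i_1^*TM_2)$ is the tangent-bundle scalar multiplication. It restricts from $\delta^{TTM_1}_{TM_1}\times\delta^{TTM_2}_{TM_2}$, hence matches the product structure $(\delta\times\delta^h)$ defining the vertical side-pullback.
\item The vertical structure is the tangent lift of the pullback homogeneity $i_1^*\delta$. This is $(\id\times\delta^{TM_2})_*$, which equals $\id\times(\delta^{TM_2})_*=\id\times\delta^v$.
\end{itemize}
The identification is natural because both sides are restrictions of the same product map. I expect this to be routine.

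For (b), I would write $(I^*j_{2*})_{\sharp,h}=(\underline{I^*j_{2*}})^{*,h}(I^*j_{2*})_*$, as stated just before the lemma, and substitute (a) to get
$$
(\underline{I^*j_{2*}})^{*,h}(I_*)^{*,v}j_{2**}.
$$
The key step is an interchange of horizontal and vertical side-pullbacks of dvb-maps, which I view as a morphism-level version of lemma~\ref{lem-pb}. Both iterated pullbacks are restrictions of the same product dvb-map
$$
(\text{horizontal lift})\times(\text{vertical lift})\times j_{2**}
$$
to the same iterated fibre product, up to the swap of factors used in lemma~\ref{lem-pb}.

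The horizontal range lift $\underline{I^*j_{2*}}$ has top row $\id_{i_1^*TM_2}$ and base $\underline{j_{2*}}$ data. After the interchange, the expression becomes the vertical side-pullback, along the 2-vb-map $I_\sharp=(i_{2\sharp},i_{1\sharp})\colon j_{1*}\Rightarrow I^*j_{2*}$, of the horizontal pullback $(\underline{j_{2*}})^{*,h}j_{2**}=j_{2*\sharp}$. That is, it becomes $(I_\sharp)^{*,v}j_{2*\sharp}$.

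The main obstacle is the bookkeeping in this last step. One has to verify that the base 2-maps really match: $I_\sharp$ should appear as the "sharpening of $I_*$" over the range lift. This is the content of example~\ref{ex.sharp-diff} applied componentwise. One also has to check that the four side bundles agree, for instance $i_1^*TM_2$ versus $TM_1\times_{TM_2}\ldots$. I would first check the underlying manifolds, as iterated fibre products in $TM_1\times TM_2\times TN_1\times TTN_2$, and the two actions explicitly. Then I would rely on the fact, established in the proof of proposition~\ref{prop.dvb-fp}, that restricted product homogeneity structures are regular and determined by the ambient ones.
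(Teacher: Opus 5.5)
Your route is the same as the paper's. For (a), you unwind definitions; this is the instance $H=I$, $f=j_2$ of the example right after the definition of side-pullbacks of dvb-maps, where $(H_*)^{*,v}f_{**}$ is identified with $(H^*f_*)_*$. For (b), you write the horizontal sharpening as a pullback along the range lift, substitute (a), and interchange the two side-pullbacks; that is what the paper means by ``a direct consequence of lemma~\ref{lem-pb}''. Part (a) is fine up to one slip. Since $T(i_1^*TM_2)\subseteq TM_1\times TTM_2$, the horizontal action is the restriction of $\delta^{TM_1}_{M_1}\times\delta^{TTM_2}_{TM_2}$, not of $\delta^{TTM_1}_{TM_1}\times\delta^{TTM_2}_{TM_2}$.

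The step of (b) that you postpone as bookkeeping fails as you set it up. A vertical side-pullback of $j_{2*\sharp}$ must be taken along a 2-vb-map $\gamma\Rightarrow\beta$ whose target $\beta$ is the vertical side-map of $j_{2*\sharp}$, namely $j_{2\sharp}\colon TM_2\to j_2^*TN_2$. The result then has vertical side-map $\gamma$, and for $(I^*j_{2*})_{\sharp,h}$ this must be $j_{1\sharp}\colon TM_1\to j_1^*TN_1$ (compare the $3\times 3$ diagram). Your $I_\sharp=(i_{2\sharp},i_{1\sharp})\colon j_{1*}\Rightarrow I^*j_{2*}$ has the wrong source and the wrong target, so $(I_\sharp)^{*,v}j_{2*\sharp}$ is not even defined with this $I_\sharp$. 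Applying example~\ref{ex.sharp-diff} componentwise sharpens the wrong pair of edges, namely the horizontal ones $i_{\ell*}$.

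Here is what the interchange actually produces. The inner horizontal pullback is along $\underline{j_{2*}}$, giving $j_{2*\sharp}$. The outer vertical pullback is along the 2-vb-map obtained from $I_*$ by sharpening its vertical edges $j_{\ell*}\mapsto j_{\ell\sharp}$ (the range lift $\underline{I^*j_{2*}}$ has base 2-map $\underline{\smash j_1}\colon\id_{M_1}\Rightarrow j_1$) and pulling back its bottom edge $i_{2*}\mapsto J^*i_{2*}$. That 2-vb-map is $I_\flat=(J^*i_{2*},i_{1*})\colon j_{1\sharp}\Rightarrow j_{2\sharp}$, over $(i_1,i_1)\colon\id_{M_1}\Rightarrow\id_{M_2}$.

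With $I_\flat$ everything typechecks:
\begin{itemize}
\item the source is $(i_{1*})^{*,v}TTM_2\cong T(i_1^*TM_2)$;
\item the target $(J^*i_{2*})^{*,v}(j_{2*})^{*,h}TTN_2$ is identified with $(I^*j_{2*})^{*,h}(i_{2*})^{*,v}TTN_2\cong(I^*j_{2*})^{*,h}T(i_2^*TN_2)$ by lemma~\ref{lem-pb}.
\end{itemize}
So the $I_\sharp$ in the printed statement should be read, and proved, as $I_\flat$. This is also the 2-map that reappears in $I_\flat^*j_{2\sharp *}$ in lemma~\ref{lem.dvbmap}.
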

\begin{proof}
\textit{(a)} follows from the definition and
 \textit{(b)} is a direct consequence of lemma~\ref{lem-pb}.
\end{proof}
%
%
%
\begin{lem}\label{lem.dvbmap}
Consider the map $\Phi^{i_1}=\Phi_{i_1}^{-1}$ as in~(\ref{Phlip}) and the map $\Psi_{j_2}=\Psi^{i_2}$ from lemma~\ref{lem.Pslip}.
Then, we have the following identity of dvb-maps:
 $$\Psi^{i_2} \circ (I^* j_{2*})_\sharp  = I_\flat^* j_{2\sharp *} \circ \Phi^{i_1}.$$
\end{lem}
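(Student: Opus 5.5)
Both sides are dvb-maps out of $TT M_1$-type objects into $(J^*i_{2*})^*T(j_2^*TN_2)$, and every map involved is the restriction--corestriction of a product map. I will therefore prove the identity by embedding everything into an ambient direct product of double vector bundles and comparing components. The source is $(i_{1*})^{*,v}$-type data over $TTM_1$ (after $\Phi^{i_1}$, the object $T(i_1^*TM_2)$ is sent to $(i_{1*})^*TTM_2$). By the construction in lemma~\ref{lem.Pslip}, the common target sits inside $TM_1\times (TN_1^v\times TM_2^h)\times TTN_2$. So the plan is to express both composites as corestrictions of explicit maps into this product and check equality factor by factor.

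First I would unwind the left-hand side. By lemma~\ref{lem.dvbmap0}(b), $(I^*j_{2*})_\sharp\cong (I_\sharp)^{*,v}j_{2*\sharp}$. By remark~\ref{rem.sharp-as-pb} and remark~\ref{rem.natural}, this sharpening is the corestriction of a bracket map of the form $\dtimes{\underline{\pi}^h}{\,\cdot\,}$ into $T(i_1^*TM_2)$-lift times $(i_{2*})^{*,v}TTN_2$. Its components are, schematically, the projection to $TM_1$, the side maps $i_{1*}$ into $TM_2$ and $j_{1*}$ into $TN_1$, and the double differential $(j_2\circ i_1)_{**}$ into $TTN_2$. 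Composing with $\Psi^{i_2}=\Psi_{j_2}$ only applies $\mathrm{swap}\circ\flip$ to the middle factor $TM_2^h\times TN_1^v$ and leaves the $TTN_2$ component untouched.

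Second, I would unwind the right-hand side in the same way. $\Phi^{i_1}$ is the restriction of $\flip\times\id$ on $TM_1^v\times TTM_2$, as in~(\ref{Phlip}) and example~\ref{Phi-as-pb}. By proposition~\ref{prop.Phlip-map}, $\Phi\circ f_{*\sharp}=f_{\sharp *}$, so the vertical-sharp double differential $j_{2\sharp *}$ is likewise a corestriction of $\dtimes{\underline{\pi}^v}{j_{2**}}$. Its pullback $I_\flat^*j_{2\sharp *}$ along the 2-vb-map $I_\flat=(J^*i_{2*},i_{1*})$ is, by remark~\ref{rem.natural}, a fibre product map $\gamma^h\times_{\alpha^h}j_{2\sharp*}$. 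Its components are therefore the base projection to $TM_1$, the lifted side maps $i_{1*}$ and $j_{1*}$ (now with the flipped homogeneity), and $j_{2**}$ applied to the $TTM_2$ coordinate. On $TTM_2$-coordinates coming from $i_{1**}$, this last component is $(j_2\circ i_1)_{**}$. Comparing the two descriptions, the underlying smooth maps coincide componentwise, and the homogeneity structures on the middle factors agree exactly because both sides have applied one flip to the $TM_2$/$TM_1$ lift.

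The main obstacle is bookkeeping rather than a computation: one must make sure the natural identifications used do not introduce hidden discrepancies. These are the identifications in lemma~\ref{lem-pb}, in lemma~\ref{lem.dvbmap0}, and $(f_*)^{*,v}TTN\cong T(f^*TN)$ from~(\ref{eq.vert-id}). In particular, one has to check that the partial flip hidden in $\Phi^{i_1}$ (acting on the $TM_1$ factor) combined with $\Psi_{j_2}$ produces the same total reshuffling of factors. I would handle this by writing each identification explicitly as a map of points $\left[\begin{smallmatrix} (e,d) & e\\ (m,b) & m\end{smallmatrix}\right]$, as in the definitions of side-pullbacks, and tracking a generic element. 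Since all maps are restrictions of products of flips, swaps and differentials, equality on the ambient product suffices. Once the smooth maps agree, compatibility with the homogeneity structures is automatic, because dvb-maps are determined by their underlying maps.
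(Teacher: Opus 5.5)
Your plan is the paper's own. Its proof is the commutativity of a single ambient square, restricted and corestricted:
\begin{itemize}
\item the top arrow is $(\id\times\mathrm{swap}\times\id)\circ(\dtimes{\underline{\pi}}{j_{1*}^v}\times\dtimes{\underline{\pi}^h}{j_{2**}})$ on $TM_1^v\times TTM_2$;
\item the bottom arrow is $\dtimes{\underline{\pi}}{j_{1*}^h}\times\dtimes{\underline{\pi}^v}{j_{2**}}$ on $TM_1^h\times TTM_2$;
\item the two rows are linked by vertical flips.
\end{itemize}
The gap is in your component check, which is the whole content of the argument.

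The common source is not a ``$TTM_1$-type'' object. It is $T(i_1^*TM_2)\cong TM_1\times_{i_{1*},\pi^v}TTM_2$, whose points $(\xi,X)$ have $X\in TTM_2$ arbitrary subject to $\pi^vX=i_{1*}\xi$. In general $X\notin i_{1**}(TTM_1)$: take $\xi=0$ and $X$ vertical in a direction outside $i_{1*}TM_1$. On such a point, $(I^*j_{2*})_\sharp$ gives $(\pi\xi,\pi^hX,j_{1*}\xi,j_{2**}X)\in M_1\times TM_2^h\times TN_1^v\times TTN_2$. So the first slot lies in $M_1$, not $TM_1$. The $TM_2$-slot is a projection of $X$, not an $i_{1*}$-image. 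The $TTN_2$-slot is $j_{2**}X$, not $(j_2\circ i_1)_{**}$ of anything. Your reconciliation ``on $TTM_2$-coordinates coming from $i_{1**}$'' therefore checks the identity only on the image of $TTM_1$. That is a proper sub-bundle whenever $\dim M_2>\dim M_1$, so as written the argument does not prove the lemma.

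Once you track a generic point correctly, a second problem appears. It cannot be handled by your reading of $\Phi^{i_1}$ as $\flip\times\id$ and of $\Psi^{i_2}$ as leaving $TTN_2$ untouched.
\begin{itemize}
\item The bottom arrow sends $(\xi,X')\in TM_1\times_{i_{1*},\pi^h}TTM_2$ to $(\pi\xi,j_{1*}\xi,\pi^vX',j_{2**}X')$.
\item With identity maps on the double tangent factors, the $TM_2$-slots of the two composites would therefore be $\pi^hX$ and $\pi^vX$, which differ.
\item Worse, the identity on points does not even carry $\{\pi^vX=i_{1*}\xi\}$ into $\{\pi^hX'=i_{1*}\xi\}$.
\end{itemize}
The flips must act on $TTM_2$ and $TTN_2$ by the canonical involutions $\kappa_{M_2}$ and $\kappa_{N_2}$, i.e.\ $\Phi^{i_1}(\xi,X)=(\xi,\kappa_{M_2}X)$, and $\Psi^{i_2}$ applies $\kappa_{N_2}$ to the last slot. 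The ``$\id$'' written on these factors in the paper's square has to be read this way as well. With this correction:
\begin{itemize}
\item the $TM_2$-slots agree because $\pi^v\circ\kappa=\pi^h$;
\item the $TTN_2$-slots agree by the naturality $j_{2**}\circ\kappa_{M_2}=\kappa_{N_2}\circ j_{2**}$.
\end{itemize}
This naturality is the only non-formal input of the proof, and it is missing from your proposal.
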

\begin{proof}
The expected identity is induced by the (obvious) commutativity of the square
$$
\xymatrix@C=11pc{
    TM_1^v \times TTM_2 
    \ar[r]^-{  
        (\id \, \times\, \mathrm{swap} \,\times \,\id)\, \circ\, \left( \dtimes{\underline{\smash \pi}}{j_{1*}^v}
        \times \dtimes{\underline{\smash \pi}^h}{j_{2**}}\right) 
        } 
    & M_1 \times TM_2^h \times TN_1 ^v \times TTN_2 
    \\
    TM_1^h \times TTM_2 
    \ar[r]_-{ \dtimes{\underline{\smash \pi}}{ j_{1*}^h} \times \dtimes{\underline{\smash \pi}^v}{j_{2**}} } 
    \fliparu^-{\flip \, \times\, \id}
    & 
    M_1 \times TN_1^h \times TM_2 ^v \times TTN_2
    \fliparu_-{\id\, \times (\mathrm{swap}\, \circ \, \flip) \times \,\id}
    }
$$
where $\underline{\smash \pi}$ denotes the obvious projections $TM_1^h \rightarrow M_1$ and $ TM_1^v \rightarrow M_1$, and $j_{1*}^v, j_{1*}^h$ are respectively the vertical and horizontal dvb-lift of the vb-map $j_{1*}$.
\end{proof}
%
%
%
%
%
%
\begin{lem}\label{lem.flip3x2}
 The quotient flip $\Psi_{i_2}/\Phi_{i_1}$ identifies canonically with a flip isomorphism 
 $$
  \Theta_{I}: \nuup^{(2)}(I)^*T\nuup(j_2)\flipisomvar \nuup(I^*j_{2*}).
 $$
 In the same manner, $\Psi_{j_2}/ \Phi_{j_1}$ identifies canonically to a flip isomorphism
 $$
 \Theta_J :  \nuup^{(2)}(J)^*T\nuup(i_2)\flipisomvar \nuup(J^*i_{2*}).
 $$
\end{lem}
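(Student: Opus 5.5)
The plan is to realize $\Theta_I$ as a quotient flip in the sense of \S\ref{par.quotient-flip}, built from a morphism between two short exact sequences of vb-objects over $i_1^*TM_2 \rightarrow M_1$: a vertical one and a horizontal one. The whole argument only treats the $I$ case. The $J$ case follows by interchanging the roles of $i$ and $j$, which is legitimate since regularity is symmetric (proposition~\ref{prop-2immer}).

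\textbf{The horizontal sequence.} By definition~\ref{def.vbnormal} applied to the vb-immersion $I^*j_{2*}$ (a vb-immersion by example~\ref{ex.unbalanced}), we have the short exact sequence in $\VB^h(i_1^*TM_2 \rightarrow M_1)$
$$0 \rightarrow T(i_1^*TM_2) \xrightarrow{(I^*j_{2*})_\sharp} (I^*j_{2*})^{*}T(i_2^*TN_2) \xrightarrow{\quot_h} \nuup(I^*j_{2*}) \rightarrow 0 .$$

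\textbf{The vertical sequence.} Start from the sequence defining $\nuup(j_2)$ and apply $T$, then apply the vertical side-pullback along $\nuup^{(2)}(I)$, which is a vb-immersion by regularity. By proposition~\ref{prop.quot-tgt}, $T\nuup(j_2)$ is the vertical quotient of $T(j_2^*TN_2)$ by $TTM_2$. Pulling this back and using proposition~\ref{prop.vpb-hquot}, in its flipped form (horizontal side-pullback compatible with vertical quotient), gives
$$\nuup^{(2)}(I)^*T\nuup(j_2) \cong \frac{(I_\flat)^{*}T(j_2^*TN_2)}{(i_{1\sharp})^* TTM_2}{\scriptstyle v}.$$
Here the pullback is taken along the 2-vb-map $\Theta=(\nuup^{(2)}(I), J^*i_{2*})$ for the numerator and along $I_\sharp$-type data for the subobject, exactly as in the proof of proposition~\ref{prop.vpb-hquot}. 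Up to the canonical identifications of lemma~\ref{lem.dvbmap0} and lemma~\ref{lem-pb}, the embedding in this presentation is $I_\flat^*j_{2\sharp*}$ and the middle term is $(J^*i_{2*})^*T(j_2^*TN_2)$. Finally, using the natural isomorphism $T(i_1^*TM_2)\cong (i_{1*})^{*,v}TTM_2$ of example~\ref{ex.sidepb-dbtg}, the first term is matched with the source of $\Phi^{i_1}$.

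\textbf{Assembling the quotient flip.} The flips are then placed vertically between the two sequences: $\Phi_{i_1}$ on the left node and $\Psi_{i_2}$ on the middle node. Lemma~\ref{lem.dvbmap} gives $\Psi^{i_2}\circ (I^*j_{2*})_\sharp = I_\flat^*j_{2\sharp*}\circ\Phi^{i_1}$, which inverted says exactly that the left square commutes. Both embeddings are embeddings of vb-objects over the same base, the horizontal side bundle being $i_1^*TM_2$ on one side and the vertical side bundle on the other. Hence the general (non-total) flip version of the quotient flip construction of \S\ref{par.quotient-flip} produces a unique flip map $\Psi_{i_2}/\Phi_{i_1}$ between the cokernels, compatible with $\quot_v$ and $\quot_h$. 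Because both $\Psi_{i_2}$ and $\Phi_{i_1}$ are isomorphisms, the five-lemma-type argument in the abelian-like categories $\VB^{h}$ and $\VB^{v}$ shows the induced map is an isomorphism. Composing it with the canonical isomorphism of the previous step yields $\Theta_I$, and canonicity follows from the uniqueness of cokernel comparisons, as in proposition~\ref{prop.quot-flip}.

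\textbf{Main obstacle.} The hard part will be the bookkeeping in the vertical sequence: verifying that, after the identifications of proposition~\ref{prop.vpb-hquot} and lemmas~\ref{lem-pb} and~\ref{lem.dvbmap0}, the embedding really is $I_\flat^*j_{2\sharp*}$ and the objects land in a common $\VB^v(i_1^*TM_2\rightarrow M_1)$. Only then do $\Phi_{i_1}$ and $\Psi_{i_2}$ literally form a morphism from a vertical to a horizontal sequence. I would check this first by writing every object as a submanifold of the ambient product $TM_1\times TM_2\times TN_1\times TTN_2$ with explicit homogeneity structures, as in the proofs of lemmas~\ref{lem.Pslip} and~\ref{lem.dvbmap}. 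In that form all the identifications become restrictions of the swap and flip maps on this ambient product.
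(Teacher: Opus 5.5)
Your proposal is correct and follows the paper's proof. The commuting square of lemma~\ref{lem.dvbmap}, with $\Phi_{i_1}$ and $\Psi_{i_2}$ as the vertical flips, induces the quotient flip $\Psi_{i_2}/\Phi_{i_1}$. The vertical quotient is identified with $\nuup^{(2)}(I)^{*}T\nuup(j_2)$ via proposition~\ref{prop.quot-tgt} and the transposed proposition~\ref{prop.vpb-hquot}, and the horizontal quotient is $\nuup(I^*j_{2*})$ by definition; the paper additionally passes through lemmas~\ref{lem-pb} and~\ref{lem2x2} at that last step.

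Your ``vertical sequence'' paragraph needs cleaning up:
\begin{itemize}
\item The side-pullback along $\nuup^{(2)}(I)$ is the \emph{horizontal} one.
\item The terms should read $(J^*i_{2*})^{*}T(j_2^*TN_2)$ and $(i_{1*})^{*,h}TTM_2$, not $(I_\flat)^*\cdots$ and $(i_{1\sharp})^*\cdots$.
\item $(i_{1*})^{*,h}TTM_2$ is the source of $\Phi_{i_1}$. The isomorphism $T(i_1^*TM_2)\cong(i_{1*})^{*,v}TTM_2$ concerns the first term of the horizontal sequence and is already built into $\Phi_{i_1}$.
\end{itemize}
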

\begin{proof} 
 Since both constructions are essentially the same, we only present the first one.
 By the lemma~\ref{lem.dvbmap}, the following horizontal 2-map
 $$
 \xymatrix@R=1.4pc@C=4pc{
    (i_1)^*TTM_2 \flipard_-{\Phi_{i_1}}
    \ar[r]^-{{I_\flat}^*j_{2\sharp *}}
    \rRightarrow
    &
    (J^* i_{2*})^* T(j_2^* TN_2) \flipard^-{\Psi_{i_2}}
     \\ 
    T(i_1^* TM_2) 
    \ar[r]_-{(I^*j_{2*})_{\sharp}} 
    &
    (I^* j_{2*})^* T(i_2^* TN_2)
 }
 $$
 induces the quotient flip
 $$
 \Psi_{i_2} / \Phi_{i_1} : \frac{(J^* i_{2*})^* T(j_2^* TN_2)}{(i_{1*})^*TTM_2}\!{\scriptstyle v} \longflipar (I^* j_{2*})^* T(i_2^* TN_2)\Big/_{\!\!\! h} T(i_1^* TM_2).
 $$
 On one side, the proposition~\ref{prop.quot-tgt} together with the (analog of) proposition~\ref{prop.vpb-hquot} provide a non-flipping canonical dvb-isomorphism
 $$
    \frac{(J^* i_{2*})^* T(j_2^* TN_2)}{(i_1)^*TTM_2}\!{\scriptstyle v} 
    \cong \left(\frac{J^*i_{2*}}{i_{1*}}\right)^{*,h} T\left(\frac{j_2^* TN_2}{TM_2} \right) = \nuup^{(2)}(I)^{*,h} T \nuup(j_{2}).
 $$
 On the other side, using lemma~\ref{lem-pb}, once again proposition~\ref{prop.vpb-hquot}, and lemma~\ref{lem2x2}, there are non-flipping canonical dvb-isomorphisms
 $$
 \begin{array}{rcl}
   (I^* j_{2*})^* T(i_2^* TN_2)\Big/_{\!\!\! h} T(i_1^* TM_2) 
    &\cong& 
    (J^* i_{2*})^{*,v} (j_{2*})^{*,h} TTN_2\Big/_{\!\!\! h} (i_{1*})^{*,v}TTM_2
     \\[0.5pc]
     &\cong& \displaystyle\left(\frac{J^*i_{2*}}{i_{1*}}\right)^{*,v} \left( (j_{2*})^{*,h} TTN_2\Big/_{\!\!\! h} TTM_2 \right) 
     \\[0.5pc]
     &=& \nuup^{(2)}(I)^{*,v} \nuup(j_{2*})
     \\ 
     &\cong& \nuup(I^* j_{2*}).
 \end{array}
 $$
 In conclusion, $\Psi_{i_2} / \Phi_{i_1}$ induces a flip map $\nuup^{(2)}(I)^* T\nuup(j_2) \flipar \nuup(I^*j_{2*})$. 
\end{proof}
Set $\Theta^I:= \Theta_I^{-1}$ and $\Theta^J=\Theta_J^{-1}$ for the respective inverses. 

%
%
%
\begin{lem}\label{lem.sq-flip}
With the previous notations, $\Theta_J \circ \nuup^{(2)}(J)_\sharp  = \nuup^{(2)}(J_\sharp) \circ \Upsilon^{i_1}$.
\end{lem}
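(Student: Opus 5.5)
The plan is to prove the identity one level up, before any quotient is taken, and then pass to quotients using the universal property of cokernels in the categories of vb-objects. Each of the four maps in the statement is a quotient of maps between double vector bundles built from $TTM_1$, $TTM_2$, $TTN_1$, $TTN_2$.

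\emph{Step 1: write each map as a quotient.}
\begin{itemize}
\item[(a)] By definition $\nuup^{(2)}(J)=J^*i_{2*}/j_{1*}$. Proposition~\ref{prop.quot-tgt} gives $T\nuup(i_1)\cong \frac{T(i_1^*TM_2)}{TTM_1}{\!\scriptstyle v}$ and identifies the differential of $\nuup^{(2)}(J)$ with the vertical quotient $(J^*i_{2*})_*/j_{1**}$.
\item[(b)] Horizontal sharpening is a horizontal side-pullback (remark~\ref{rem.sharp-as-pb}), and side-pullbacks commute with quotients (proposition~\ref{prop.vpb-hquot} and its converse). Hence $\nuup^{(2)}(J)_\sharp$ is the quotient of the sharpening $(J^*i_{2*})_\sharp$ by $j_{1*\sharp}$, or by the corresponding sharpening on the denominator $TTM_1$.
\item[(c)] By construction in lemma~\ref{lem.flip3x2}, $\Theta_J=\Psi_{j_2}/\Phi_{j_1}$.
\item[(d)] By proposition~\ref{prop-TN=NT}, $\Upsilon^{i_1}$ is the inverse of the quotient flip $\Phi_{i_1}/\id_{TTM_1}$, that is $\Phi^{i_1}/\id$.
\item[(e)] $\nuup^{(2)}(J_\sharp)$ is the normal differential of the 2-map of vb-immersions $J_\sharp:i_{1*}\Rightarrow J^*i_{2*}$. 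Unwinding definition~\ref{def.vbnormal} and using lemma~\ref{lem.dvbmap0} (with $I,J$ interchanged), it is the horizontal quotient of a side-pulled-back sharp double differential, roughly $J_\flat^* i_{2\sharp *}$ after the identification of lemma~\ref{lem-pb}, by $j_{1**}$ sharpened.
\end{itemize}

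\emph{Step 2: prove the identity before quotienting.} Both sides are then quotient flip maps built from the same two rows: the exact sequence defining $\nuup(i_{1*})$ and the one defining $\nuup(J^*i_{2*})$. The identity reduces to two commutativities.
\begin{itemize}
\item[(i)] Numerators: $\Psi_{j_2}\circ (J^*i_{2*})_\sharp = J_\flat^* i_{2\sharp*}\circ\Phi^{j_1}$-type identity. This is lemma~\ref{lem.dvbmap} with the roles of $I$ and $J$ interchanged, and I would prove it the same way: the flips and swaps are restrictions of $\flip\times\mathrm{swap}\times\id$ on the ambient products $M_1\times TN_1\times TM_2\times TTN_2$, and the two composites visibly agree there.
\item[(ii)] Denominators: the corresponding square on $TTM_1$-level objects, involving $\id_{TTM_1}$, $\Phi_{j_1}$, $j_{1*\sharp}$ and $j_{1\sharp *}$, commutes. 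This is exactly proposition~\ref{prop.Phlip-map}, $\Phi\circ f_{*\sharp}=f_{\sharp *}$, applied to $f=j_1$.
\end{itemize}

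\emph{Step 3: pass to quotients.} Given (i) and (ii), apply proposition~\ref{prop.quot-flip} twice. It says that a quotient of a composite of a dvb-map with a flip equals the composite of the quotients. This rewrites the left side as $(\Psi_{j_2}\circ(\text{numerator sharpening}))/(\Phi_{j_1}\circ(\text{denominator sharpening}))$, and similarly for the right side. Since the numerators and denominators agree by Step 2, both sides are quotients of the same 2-map out of the same cokernel diagram. Uniqueness of the induced map between cokernels in $\VB^h_v$-type categories then forces equality.

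The main obstacle is bookkeeping. All the non-flipping canonical isomorphisms used to identify the quotients must be compatible with the legs of the cokernel diagrams. These are those of lemma~\ref{lem-pb}, lemma~\ref{lem2x2}, proposition~\ref{prop.vpb-hquot} and proposition~\ref{prop.quot-tgt}, which define $\Theta_J$, $\Upsilon^{i_1}$, $\nuup^{(2)}(J)_\sharp$ and $\nuup^{(2)}(J_\sharp)$. Without this compatibility the uniqueness argument does not apply. I would handle it by noting that every such isomorphism was characterized by compatibility with the quotient projections, as in the footnote to proposition~\ref{prop.vpb-hquot}, and then checking on each leg that the projections commute with the flips.
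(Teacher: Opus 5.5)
Your strategy is the paper's. You realize the four maps as quotients, check a numerator square and a denominator square, and pass to the quotient with proposition~\ref{prop.quot-flip}. It is used as stated on the right-hand side, and transposed on the left-hand side, where the flip comes after the dvb-map. Your denominator square, $\Phi_{j_1}\circ j_{1*\sharp}=j_{1\sharp*}$ from proposition~\ref{prop.Phlip-map} with $f=j_1$, is exactly the one the paper uses.

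The numerator square, however, is misidentified, and the error starts in Step~1(a). The verticalization of $J=(j_2,j_1)\colon i_1\Rightarrow i_2$ is $I$, so $\nuup^{(2)}(J)=I^*j_{2*}/j_{1*}\colon\nuup(i_1)\to\nuup(i_2)$. It is not $J^*i_{2*}/j_{1*}$, which does not type-check (and $J^*i_{2*}/i_{1*}$ is $\nuup^{(2)}(I)$). Accordingly, the numerators in (a), (b) and (e) are:
\begin{itemize}
\item[(a)] $(I^*j_{2*})_*$;
\item[(b)] $(I^*j_{2*})_\sharp\colon T(i_1^*TM_2)\to(I^*j_{2*})^*T(i_2^*TN_2)$;
\item[(e)] $I_\flat^*j_{2\sharp*}\colon(i_{1*})^*TTM_2\to(J^*i_{2*})^*T(j_2^*TN_2)$, so that $\nuup^{(2)}(J_\sharp)=I_\flat^*j_{2\sharp*}/j_{1\sharp*}$.
\end{itemize}
Only these are consistent with your own description of $T\nuup(i_1)$ as a quotient of $T(i_1^*TM_2)$ by $TTM_1$.

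Step~2(i) therefore needs lemma~\ref{lem.dvbmap} exactly as stated, $\Psi_{j_2}\circ(I^*j_{2*})_\sharp=I_\flat^*j_{2\sharp*}\circ\Phi^{i_1}$ (recall $\Psi^{i_2}=\Psi_{j_2}$), not its $I\leftrightarrow J$ interchange.
\begin{itemize}
\item The identity you wrote is not composable, since $\Psi_{j_2}$ and $(J^*i_{2*})_\sharp$ both land in $(J^*i_{2*})^*T(j_2^*TN_2)$.
\item With $\Psi_{i_2}=\Psi_{j_2}^{-1}$ in its place it is true. But it is the square used in lemma~\ref{lem.flip3x2} to \emph{define} $\Theta_J=\Psi_{j_2}/\Phi_{j_1}$. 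Equivalently, it is the numerator square for the companion identity involving $\Theta_I$ and $\Upsilon^{j_1}$, whose denominators come from $i_1$.
\item It starts at $T(j_1^*TN_1)$ and shares the edge $\Phi_{j_1}$ (up to inversion) with your denominator square. The two are adjacent faces of the same cube, not a numerator/denominator pair, so no quotient of one by the other is defined.
\item It says nothing about whether $\Psi_{j_2}\circ(I^*j_{2*})_\sharp$ and $I_\flat^*j_{2\sharp*}\circ\Phi^{i_1}$ agree modulo $T(j_1^*TN_1)$. That is precisely what the uniqueness argument of your Step~3 requires.
\end{itemize}
With the indices corrected, your argument becomes the paper's proof.
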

\begin{proof}
The following square of dvb-maps (vertically) and flip isomorphisms (horizontally)
\begin{align}\label{sq1}
 \begin{split}
    \xymatrix@C=1.5pc@R=2pc{
    T\nuup(i_1) \fliparrr^-{\Upsilon^{i_1}} \ar[d]_-{\nuup^{(2)}(J)_\sharp} 
    &&
    \nuup(i_{1*}) 
    \ar[d]^-{\nuup^{(2)}(J_\sharp)}
    \\
    \nuup^{(2)}(J)^{*} T\nuup(i_2) 
    \fliparrr_-{\Theta_J} && 
    \nuup(J^*i_{2*})  
    }
 \end{split}
\end{align}
is isomorphic to the square 
\begin{align}\label{sq2}
 \begin{split}
 \xymatrix@C=2pc@R=2.5pc{
 \frac{T(i_1^*TM_2)}{TM_1}{\!\scriptstyle v} \fliparrr^-{\Phi^{i_1}/\id} \ar[d]_-{\frac{(I^*j_{2*})_\sharp}{j_{1*\sharp}}{\scriptscriptstyle \! v}} 
 &&
 \frac{(i_{1*})^*TTM_2}{TTM_1}{\scriptscriptstyle \! h}
 \ar[d]^-{\frac{I_\flat^*j_{2\sharp *}}{j_{1\sharp *}}{\scriptscriptstyle \! h}}
 \\
 \frac{(I^*j_{2*})^*T(i_2^*TN_2)}{(j_{1*})^*TTN_1}{\! \scriptstyle v} 
 \fliparrr_-{\Psi_{j_2}/\Phi_{j_1}} 
 && 
 \frac{(J^*i_{2*})^*T(j_2^*TN_2)}{T(j_1^*TN_1)}{\scriptscriptstyle \! h}  
 }
 \end{split}
\end{align}
thanks to propositions~\ref{prop.quot-tgt},~\ref{prop-TN=NT} and~\ref{prop.quot-flip}.
In particular, the square~(\ref{sq1}) commutes if and only if the square~(\ref{sq2}) does.
Now, the commutativity of (\ref{sq2}) is shown directly, as follows (recall that $\Psi^{i_2}=\Psi_{j_2}$): 
$$
\begin{array}{rcll}
  \displaystyle\frac{I_\flat^{*,h} j_{2\sharp *}}{j_{1\sharp *}}{\! \scriptstyle h} \circ \frac{\Phi^{i_1}}{\id} 
  &=& 
  \displaystyle\frac{I_\flat^{*,h} j_{2\sharp *} \circ \Phi^{i_1}}{j_{1\sharp *}}
  &  \text{(proposition~\ref{prop.quot-flip})}
  \\[0.8pc]
  &=&
  \displaystyle\frac{\Psi_{j_2} \circ (I^*j_{2*})_\sharp}{\Phi_{j_1} \circ j_{1*\sharp}}
  &   \text{(lemma~\ref{lem.dvbmap} and proposition~\ref{prop.Phlip-map})}
  \\[0.8pc]
  &=&
  \displaystyle\frac{\Psi_{j_2}}{\Phi_{j_1}} \circ \frac{(I^*j_{2*})_\sharp}{j_{1*\sharp}}{\! \scriptstyle v} 
  &  \text{(proposition~\ref{prop.quot-flip}, transposed)}.
\end{array}
$$
\end{proof}
\begin{lem}[]\label{lem.3x3}
 Given a regular immersion square $(I,J)$, there is a canonical isomorphism 
  $\nuup \circ \nuup^{(2)} (J) \cong \coker\,\nuup^{(2)}(I_\sharp)$ of double vector bundles.
\end{lem}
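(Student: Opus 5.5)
Unwinding the definitions, $\nuup\circ\nuup^{(2)}(J)$ is the vb-normal bundle of the vb-immersion $\nuup^{(2)}(J)\colon \nuup(i_1)\looparrowright\nuup(i_2)$ over $j_1$, which is an immersion by regularity. By Definition~\ref{def.vbnormal}, it is the cokernel in $\VB^h(\nuup(i_1)\!\rightarrow\! M_1)$ of the horizontal sharpening
\[
\nuup^{(2)}(J)_\sharp\colon T\nuup(i_1)\longrightarrow \nuup^{(2)}(J)^{*}T\nuup(i_2).
\]
Lemma~\ref{lem.sq-flip} identifies this sharpening, through the flip isomorphisms $\Upsilon^{i_1}$ (Proposition~\ref{prop-TN=NT}) and $\Theta_J$ (Lemma~\ref{lem.flip3x2}), with the dvb-map $\nuup^{(2)}(J_\sharp)\colon \nuup(i_{1*})\to\nuup(J^*i_{2*})$. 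Here $J_\sharp$ denotes the 2-vb-immersion $(J^*i_{2*}, i_{1*})$ read along the sharp maps $j_{1\sharp}, j_{2\sharp}$, that is, the square $(I_\flat, J_\sharp)$. So the plan is to transport the cokernel along the commutative square~(\ref{sq1}).

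First, I will check that $\nuup^{(2)}(J_\sharp)$, which by construction is the vb-version of the normal differential (a vertical quotient of dvb-maps), is an embedding of vb-objects in the transposed direction. This follows from the square~(\ref{sq1}): since $\Upsilon^{i_1}$ and $\Theta_J$ are flip isomorphisms and $\nuup^{(2)}(J)_\sharp$ is a horizontal embedding, $\nuup^{(2)}(J_\sharp)=\Theta_J\circ\nuup^{(2)}(J)_\sharp\circ\Upsilon_{i_1}$ is an embedding, now of vertical vb-objects over $\nuup(i_1)\!\rightarrow\! M_1$, up to the flip. Hence $\coker\,\nuup^{(2)}(J_\sharp)$ exists as a vertical quotient. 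By Remark~\ref{rem.quot-deco}, the flip of a vertical quotient is the corresponding horizontal quotient.

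Second, I will apply the quotient-flip construction of \S\ref{par.quotient-flip} to the morphism of short exact sequences whose left square is~(\ref{sq1}) and whose vertical arrows are the flips $\Upsilon^{i_1},\Theta_J$. This produces a quotient flip $\Theta_J/\Upsilon^{i_1}\colon \nuup\circ\nuup^{(2)}(J)\flipar \coker\,\nuup^{(2)}(J_\sharp)$, which is an isomorphism by the universality of cokernels, exactly as in the proof of Proposition~\ref{prop.quot-flip}.

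Third, I need to pass from $J_\sharp$ to $I_\sharp$. The squares $(I_\sharp,J_\flat)$ and $(I_\flat,J_\sharp)$ of sharp differentials are related by the swap of Lemma~\ref{lem-pb} and Lemma~\ref{lem.dvbmap0}(b). Concretely, I will identify $\nuup(i_{1*})$ with $\nuup(j_{1*})$-type data and $\nuup(J^*i_{2*})$ with $\nuup^{(2)}(I)^{*}$-pullbacks using Lemma~\ref{lem2x2} and its $J$-analogue. This should show that $\nuup^{(2)}(J_\sharp)$ and $\nuup^{(2)}(I_\sharp)$ have the same target $\nuup(I^*j_{2*})$ after the canonical isomorphisms. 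It should also show that their sources coincide as the appropriate normal bundles of sharp differentials, so the cokernels agree canonically, and the flips cancel to give a non-flipping dvb-isomorphism.

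The main obstacle is this third step. One has to be careful that the statement really concerns $\nuup^{(2)}(I_\sharp)$ rather than $\nuup^{(2)}(J_\sharp)$, and track which identifications are flips and which are genuine dvb-isomorphisms. I would first try to realize everything inside the ambient product $TM_1\times TN_1\times TM_2\times TTN_2$, as in the proofs of Lemmas~\ref{lem.Pslip} and~\ref{lem.dvbmap}. There the swap makes the compatibility evident, and then only the homogeneity structures need to be compared.
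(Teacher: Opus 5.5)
\textbf{The proposal has a genuine gap.}

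Your Step~3 is where the argument breaks, and it is not a bookkeeping step. The maps $\nuup^{(2)}(J_\sharp)\colon\nuup(i_{1*})\to\nuup(J^*i_{2*})$ and $\nuup^{(2)}(I_\sharp)\colon\nuup(j_{1*})\to\nuup(I^*j_{2*})$ have genuinely different sources and targets. Write $m_k=\dim M_k$ and $n_k=\dim N_k$.
\begin{itemize}
\item The four total spaces have dimensions $2m_2$, $m_2+n_2$, $2n_1$ and $n_1+n_2$.
\item The side bundles are $(TM_1,\nuup(i_1))$ and $(j_1^*TN_1,\nuup(i_1))$ for the first map, versus $(TM_1,\nuup(j_1))$ and $(i_1^*TM_2,\nuup(j_1))$ for the second.
\end{itemize}
So no canonical identification can give them ``the same target $\nuup(I^*j_{2*})$''; this includes Lemma~\ref{lem2x2} and its $J$-analogue. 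Only their cokernels, both of total dimension $n_2$, can agree, and then only up to a flip.

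That remaining step is itself the hard part. The transposed form of the very statement you are proving gives $\coker\,\nuup^{(2)}(J_\sharp)\cong\nuup\circ\nuup^{(2)}(I)$. Hence the step $\coker\,\nuup^{(2)}(J_\sharp)\cong\coker\,\nuup^{(2)}(I_\sharp)$ amounts to the symmetry theorem. In fact your Steps~1--2, the quotient flip $\Theta_J/\Upsilon^{i_1}$ built from Lemma~\ref{lem.sq-flip}, are exactly the paper's proof of Theorem~\ref{thm.main}. That proof then needs the present lemma, transposed, to finish, so your route is circular. Working in the ambient product $TM_1\times TN_1\times TM_2\times TTN_2$ does not help either: that device handles pullbacks, which are subobjects of products, not cokernels.

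The missing idea is that the lemma lives inside the single $3\times3$ diagram displayed after it, and involves no flips at all. On one side, $\nuup\circ\nuup^{(2)}(J)$ is the horizontal cokernel of the right column $\nuup^{(2)}(J)_\sharp$. On the other, $\coker\,\nuup^{(2)}(I_\sharp)$ is the vertical quotient $\frac{\nuup(I^*j_{2*})}{\nuup(j_{1*})}{\scriptstyle v}$ of the bottom row.

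The paper identifies the right column, canonically and without a flip, with the vertical quotient of the middle column by the left one: $\frac{(I^*j_{2*})_\sharp}{j_{1*\sharp}}{\scriptstyle v}\cong\nuup^{(2)}(J)_\sharp$. The steps are:
\begin{itemize}
\item write both sharpenings as side-pullbacks of $(I^*j_{2*})_*$ and $j_{1**}$ along range lifts (Remark~\ref{rem.sharp-as-pb});
\item commute side-pullback with quotient (Proposition~\ref{prop.vpb-hquot}, transposed);
\item note that the quotient of the range lifts is the range lift of $I^*j_{2*}/j_{1*}=\nuup^{(2)}(J)$;
\item use $\frac{(I^*j_{2*})_*}{j_{1**}}{\scriptstyle v}\cong\nuup^{(2)}(J)_*$ (Proposition~\ref{prop.quot-tgt}).
\end{itemize}
Once the right column is the quotient of the middle column by the left column, its cokernel and the cokernel of the bottom row are both the double quotient of the central node $(I^*j_{2*})^*T(i_2^*TN_2)$. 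This gives the canonical non-flipping isomorphism.

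Lemma~\ref{lem.sq-flip} is the wrong tool here. It links the right column of this diagram to the bottom row of the transposed diagram, which is what the theorem needs, not this lemma.
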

\begin{proof}
By definition, on one side
$
\nuup \circ \nuup^{(2)}(J) = \coker 
\left(\frac{I^*{j_{2*}}}{j_{1*}}\right)_{\sharp,h}
$ and, on the other side
$
\coker\, \nuup^{(2)}(I_\sharp) =  \frac{\nuup (I^*{j_{2*}})}{\nuup(j_{1*})}{\! \scriptscriptstyle v}. 
$
Thus, the conclusion follows from the identification
$
\frac{(I^*j_{2*})_\sharp}{j_{1*\sharp}}{\scriptscriptstyle v} \cong \left(\frac{I^*j_{2*}}{j_{1*}} \right)_{\sharp,h}
$
together with the fact that two canonically 2-isomorphic vb-maps have canonically isomorphic cokernels. 
The needed identification is provided by the following sequence of canonical identifications
$$
\begin{array}{rcll}
 \dfrac{(I^*j_{2*})_\sharp}{j_{1*\sharp}}{\scriptstyle \! v}
&\cong& 
\dfrac{(\underline{I^*j_{2*}})^*(I^*j_{2*})_*}{(\underline{j_{1*}})^* j_{1**}}{\scriptstyle \! v}
&\text{(sharpening as pullback, remark~\ref{rem.sharp-as-pb})}
\\[4mm]
&\cong&
\left( 
  \dfrac{\:\underline{I^*j_{2*}}\:}{\underline{j_{1*}}} 
\right)^*
\dfrac{(I^*j_{2*})_*}{j_{1**}}{\scriptstyle \! v}
& \text{(proposition~\ref{prop.vpb-hquot}, transposed version)}
\\[4mm]
&\cong&
\underline{
        \left( \dfrac{I^*j_{2*}}{j_{1*}} \right)
    }^*
\dfrac{(I^*j_{2*})_*}{j_{1**}}{\scriptstyle \! v}
& \left({\begin{tabular}{l}
    \text{direct check: the quotient of }
   \\
   \text{range-lift is range-lift of quotient}
  \end{tabular}
  }\right)
\\[4.5mm]
&\cong& 
\underline{
        \left( \dfrac{I^*j_{2*}}{j_{1*}} \right)
    }^*
\left(\dfrac{I^*j_{2*}}{j_{1*}}\right)_*
& \text{(proposition~\ref{prop.quot-tgt})}
\\[4.5mm]
&=& \underline{\nuup^{(2)}(J)}^*\nuup^{(2)}(J)_* 
\\[3mm]
&\cong&
\nuup^{(2)}(J)_\sharp
\end{array}
$$
\end{proof}
%
%
%
%
Gathering our results obtained so far gives the following $3 \times 3$ exact diagram of double vector bundles, in the following sense: each square commutes, each row is a short exact sequence of vertical vb-objects,  each column is an exact sequence of horizontal vb-objects (the zero objects have been omitted).
The $q$'s (resp. $\tilde q, \hat q, \bar q$) denote the corresponding quotient maps (resp. quotient map up to a canonical isomorphism).
{\footnotesize
\begin{align*}
\begin{split}
\arraycolsep=2pt
\xymatrix@C=1.6pc{
    {\left[\begin{matrix}
      TTM_1  & TM_1  
      \\ TM_1 & M_1  
    \end{matrix}\right]}
    \ar[rr]^-{
        \left[\begin{smallmatrix}
        i_{1\sharp*}& i_{1\sharp}  \\ \id & \id 
        \end{smallmatrix}\right] }
    \ar[d]_-{
        \left[\begin{smallmatrix}
        j_{1*\sharp}& \id  \\ j_{1\sharp} & \id 
    \end{smallmatrix}\right]
        }
    &&
    {\left[\begin{matrix}
      T(i_1^*TM_2) & i_1^*TM_2 
      \\ TM_1 & M_1 
    \end{matrix}\right] }
    \ar[r]^-{ 
        \left[\begin{smallmatrix}
        \tilde q& q  \\ \id & \id 
        \end{smallmatrix}\right]
        }
    \ar[d]_-{\left[\begin{smallmatrix}
        (I^* j_{2*})_\sharp & \id  \\ j_{1\sharp} & \id 
    \end{smallmatrix}\right]}
    &
    {\left[\begin{matrix}
       T\nuup(i_1)  & \nuup(i_1) 
       \\ TM_1 & M_1
    \end{matrix} \right]} 
    \ar[d]^-{
        \left[\begin{smallmatrix}
        \nuup^{(2)}(J)_\sharp & \id  \\ j_{1\sharp} & \id 
    \end{smallmatrix}\right]
    }
    \\
    {\left[\begin{matrix}
     (j_{1*})^*TTN_1 & TM_1  
     \\ j_1^* TN_1  & M_1
    \end{matrix} \right]}
    \ar[rr]^-{ 
        \left[\begin{smallmatrix}
          (J_\flat)^* i_{2\sharp *} & i_{1\sharp} \\ \id & \id 
        \end{smallmatrix}\right]
        }
        \ar[d]_-{\left[\begin{smallmatrix}
            q& \id  \\ q & \id 
            \end{smallmatrix}\right]}
    &&
    {\left[\begin{matrix}
     (I^* j_{2*})^*T(i_2^*TN_2)  & i_1^*TM_2
     \\ j_1^* TN_1  & M_1
    \end{matrix} \right] }
    \ar[r]^-{ 
        \left[\begin{smallmatrix}
            \hat q& q  \\ \id & \id 
        \end{smallmatrix}\right]
        }
    \ar[d]^-{\left[\begin{smallmatrix}
            q& \id  \\ q & \id 
        \end{smallmatrix}
        \right]}
    &
    {\left[\begin{matrix}
      \nuup^{(2)}(J)^* T \nuup(i_2)& \nuup(i_1) 
      \\ j_1^* TN_1 & M_1 
    \end{matrix}  \right] }
    \ar[d]_-{\left[\begin{smallmatrix}
            q& \id  \\ q & \id 
        \end{smallmatrix}\right]}
    \\
    {\left[\begin{matrix}
      \nuup(j_{1*})  & TM_1
      \\ \nuup(j_1) & M_1
    \end{matrix}\right]}
    \ar[rr]_-{ 
        \left[\begin{smallmatrix}
            \nuup^{(2)}(I_\sharp) & i_{1\sharp}  \\ \id & \id 
        \end{smallmatrix}\right]}
    &&
    {\left[\begin{matrix}
     \nuup(I^* j_{2*}) & i_1^* TM_2 
    \\ \nuup(j_1) & M_1
    \end{matrix}\right]}
    \ar[r]_-{ 
        \left[\begin{smallmatrix}
            \bar{q} & q  \\ \id & \id 
        \end{smallmatrix}\right]}
    &
    {\left[\begin{matrix}
     \nuup\circ \nuup^{(2)}(J) & \nuup(i_1) \\ \nuup(j_1) & M_1
    \end{matrix}\right]} 
}
\end{split}
\end{align*}
} 
 %
\begin{proof}[Proof of theorem~\ref{thm.main}.]
 Consider the quotient flip isomorphism
 $$
 \Theta_J / \Upsilon^{i_1} : \nuup^{(2)}(J)^*T\nuup(i_2) \big\slash_{\!\! \scriptstyle h} T\nuup(i_1) \longflipar \frac{\nuup(J^*i_{2*})}{\nuup(i_{1*})}{ \scriptstyle v}
 $$
 Then, by the transposed version of lemma~\ref{lem.3x3}, $\Theta_J/\Upsilon^{i_1}$ is identified to the desired flip isomorphism 
 $$
 \Lambda: \nuup \circ \nuup^{(2)}(J) \longflipar \nuup \circ \nuup^{(2)}(I)
 $$
  throught the dvb-isomorphism $\nuup \circ \nuup^{(2)}(I) \cong \frac{\nuup(J^*i_{2*})}{\nuup(i_{1*})}{ \scriptstyle v}$ on the codomain.
  Alternatively, the flip isomorphism $\Lambda$ is also obtained from $\Theta^I/\Upsilon_{j_1}$ by using the isomorphism of lemma~\ref{lem.3x3} on the domain instead.
\end{proof}
%
%
\section{Normal functor for vb-groupoids immersions}\label{sec.normalgroupoid}
A \textbf{Lie groupoid} $G \rightrightarrows G_0$ consists in two smooth manifold $G, G_0$, two surmersions $r,s: G \rightarrow G_0$, an embedding $u: G_0 \hookrightarrow G$, a diffeomorphism $\iota: G \rightarrow G$, and a composition map $\mu: G\times_{r,s} G \rightarrow G$, in such a way that the tuple $(G,G_0, r,s, u , \mu)$ defines a category with only invertible morphisms (the inversion map being $\iota$). We use the following usual notations: $\mu(g_1, g_2) := g_1g_2$, $\iota(g) := g^{-1}$, $u(x) := 1_x$.
A (strict) map of Lie groupoid $f: G \rightarrow H$ is the data of a pair of smooth map $f: G \rightarrow H$ and $f_0: G_0 \rightarrow H_0$ which are compatible with the whole structure, in other words $(f,f_0)$ defines a functor from $G$ to $H$. 
The class of Lie groupoids equipped with their maps form a category denoted $\LieGrpd$. 
In the same vein as smooth manifolds, we obtain the notion of groupoid immersion, groupoid submersion, groupoid embedding, etc... by requiring that both $f$ and $f_0$ belong to the corresponding class of maps.
\par \hspace{1pc}
Following~\cite{bursztyn-cabrera-delhoyo-2016}, a \textbf{vb-groupoid} is a Lie groupoid $E\rightrightarrows E_0$ equipped with a $(\R,\cdot)$-action $\delta: \R \times E \rightarrow E$ which is regular, and multiplicative in the sense that it acts by groupoid maps: $\delta(\lambda, g_1g_2) = \delta(\lambda, g_1) \delta(\lambda,g_2)$.
In that case, the homogeneity structure on consists in a pair of smooth $(\R,\cdot)$-actions $(\delta, \delta_0)$ compatible with the groupoid structure. 
Moreover, the subset of fixed point $G= \delta(0,E)$ admits a natural Lie groupoid structure with base manifold $G_0 = \delta_0(0,E_0)$.
The class of vb-groupoids equipped with their maps forms an additive category $\VB\Grpd$.
As for vector bundles, the vb-groupoid maps coincide with the $\R$-equivariant groupoid maps.  Morever, pullbacks and quotients of multiplicative $(\R,\cdot)$-actions are again multiplicative, in particular
\begin{prop}
\begin{itemize}
\setlength\itemsep{0pc}
    \item[]
    \item[(i)] Let $E\rightrightarrows E_0$ be a vb-groupoid with base groupoid $G$, and let $f: H \rightarrow G$ be a groupoid map. Then, the pullback  $f^*E \rightrightarrows f_0^*E_0$ has a natural structure of vb-groupoid with base groupoid $H$.
    \item[(ii)] Let $E,F$ be vb-groupoids over the Lie groupoid $G$ together with a wide vb-groupoid embedding $j: F \rightarrow E$. Then, the quotient $E/F$ has a natural structure of vb-groupoid.
\end{itemize}
\end{prop}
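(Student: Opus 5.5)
The plan is to reduce both statements to the fact that every structure map entering the pullback and quotient constructions is an $(\R,\cdot)$-equivariant groupoid map. Multiplicativity of a homogeneity action is then inherited, exactly as regularity was inherited in the vector bundle case (\S\ref{par.pb-homog-struct}, \S\ref{par.quot-homog-struct}).

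For (i), I would first record that $f^*E = H \times_{f,\pi} E$ is the fiber product of the cospan of Lie groupoids $H \xrightarrow{f} G \xleftarrow{\pi} E$. Here $\pi = \delta_0$, the projection onto $G=\delta(0,E)$, is a groupoid map because $\delta$ is multiplicative. It is also a surjective submersion on arrows and on objects, since $\delta_0$ is a surmersion of constant rank by the convention on homogeneity structures. A transversality criterion for fiber products of Lie groupoids then makes $H\times_G E \rightrightarrows H_0 \times_{G_0} E_0$ a Lie subgroupoid of $H \times E$: one needs one leg a submersion on arrows and objects, plus the usual condition that the induced map to the fiber product of source maps is a submersion, as in \cite{bursztyn-cabrera-delhoyo-2016}. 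With $\pi$ fiberwise linear and surjective, I would check that last condition using the vector bundle structure of $E$ over $G$; this is the only genuinely technical step. The action $f^*\delta = (\id_H\times\delta)\vert_{f^*E}$ preserves $f^*E$, is regular by \S\ref{par.pb-homog-struct}, and is multiplicative, since $\id_H\times\delta$ acts by groupoid maps on $H\times E$ and restriction to a subgroupoid preserves this. Its fixed-point set is $H\times_G G\cong H$, which gives base groupoid $H$.

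For (ii), assume $F\subseteq E$ is a wide vb-subgroupoid over $G$. The fibers $E_g/F_g$ assemble into the quotient vector bundle $E/F\to G$, with quotient action $\delta^{E/F}$ regular by \S\ref{par.quot-homog-struct}. Set $E_0/F_0$ as the space of objects. The source and target are the quotient vb-maps $s/s$ and $t/t$, which are well defined because $s,t$ are vb-maps sending $F$ into $F_0$. They are surmersions because $s,t$ are, and $q\colon E\to E/F$ is a surjective submersion. Multiplication is defined by $[e_1][e_2]=[e_1e_2]$ on composable pairs. The main point is well-definedness: composable representatives are unique only up to elements of $F$ with matching source and target, and here I would use that the multiplication $E^{(2)}\to E$ is a vb-map over $G^{(2)}$, hence linear on fibers. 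This gives $(e_1+f_1)(e_2+f_2)=e_1e_2+f_1f_2$ (with $f_1f_2\in F$) whenever the sum-pairs are composable. To reduce an arbitrary pair of composable classes to this situation, I would use the standard fact that $E^{(2)}\to (E/F)^{(2)}$ is a surjective submersion, since $F^{(2)}$ is a subbundle of $E^{(2)}$ (again because $F$ is a vb-subgroupoid). Smoothness of multiplication, units and inverse then follows because they descend along surjective submersions. Finally, multiplicativity of $\delta^{E/F}$ is immediate from $\delta^{E/F}_\lambda\circ q = q\circ\delta^E_\lambda$ and multiplicativity of $\delta^E$.

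The main obstacle I expect is the smoothness and submersion bookkeeping. In (i) this is the fiber product groupoid being a Lie groupoid; in (ii) it is $E^{(2)}\to(E/F)^{(2)}$ being a surjective submersion. Both are special cases of constant-rank arguments for vb-groupoids, and I would first try to cite \cite{bursztyn-cabrera-delhoyo-2016}, where vb-groupoids are characterized via regular multiplicative actions, rather than redo them.
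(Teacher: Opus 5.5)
Your proposal is correct and takes essentially the paper's route. The paper justifies the proposition only by the one-line remark that pullbacks and quotients of regular multiplicative $(\R,\cdot)$-actions stay multiplicative (regularity coming from \S\ref{par.pb-homog-struct} and \S\ref{par.quot-homog-struct}), relying implicitly on the characterization in \cite{bursztyn-cabrera-delhoyo-2016}; your extra bookkeeping in (i) (the fiber-product criterion via $e\mapsto(\pi(e),s(e))$ being a surjective submersion) and in (ii) (descent of the multiplication along $E^{(2)}\to(E/F)^{(2)}$ using fiberwise linearity and fiberwise surjectivity of the source maps) supplies the Lie groupoid structure that the paper takes for granted.
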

\begin{example}[Normal vb-groupoid]
 Let $j: H \looparrowright G$ be an groupoid immersion. Then the normal bundle $\nuup(j)$ inherits a vb-groupoid structure $\nuup(j) \rightrightarrows \nuup(j_0)$ over the Lie groupoid $H \rightrightarrows H_0$.
\end{example}
\par \hspace{1pc}
Mimicking the definition of double vector bundles, a \textbf{double vb-groupoid} is given by a Lie groupoid $D$ equipped with a pair of commuting regular multiplicative $(\R, \cdot)$-actions $(\delta^h, \delta^v)$.
Consider two groupoid immersions $j_1: G_1 \looparrowright H_1$, $j_2:  G_2 \looparrowright H_2$ and a 2-map $F: j_1 \Rightarrow j_2$ consisting of a pair $(f_2,f_1)$ of groupoid maps. Then, $\nuup^{(2)}(F): \nuup(j_1) \rightarrow \nuup(j_2)$ defines a vb-groupoid map.
\par
When $F=I$ is groupoid immersion (that is, an immersion in $\LieGrpd$), the notion of regular immersion square (cf. proposition~\ref{prop-2immer}) carries on directly: a  immersion square $(I,J)$ in $\LieGrpd$ is regular if the ``total square'' $(I,J)$ and ``base square'' $(I_0,J_0)$ are regular as immersion squares of smooth manifolds. In fact, the regularity of the base square turns out to be automatic\footnote{This is easily shown using the exact sequence characterization of the regularity of an immersion square (see proposition~\ref{prop-2immer}).}, namely $(I,J)$ is a regular immersion square in $\LieGrpd$ if and only if $(I,J)$ is a regular immersion square in $\Smooth$.
Our favorite example of double vb-groupoid arises then as the iteration $\nuup \circ \nuup^{(2)}(I)$ of the normal functor on a given a regular immersion square of Lie groupoids. 
\par\hspace{1pc}
Notice that the proof of the symmetry theorem (theorem~\ref{thm.main}) in the category $\Smooth$ relies only on the various properties of particular quotient and pullback operations for double vector bundles. Since these latter properties are shown by purely diagrammatic means, and the latter operations preserve the multiplicativity of the occuring homogeneity actions, they also hold when double vector bundles are replaced by double vb-groupoids. Consequently, we obtain 
\begin{theorem}[Symmetry theorem for double vb-groupoids]\label{thm.main_grpd}
Consider a regular immersion square $(I,J)$ in $\LieGrpd$
$$
\xymatrix@R=1.8pc{
G_1 \xloopr[1pc]^{i_1} \xloopd[0.8pc]_-{j_1} 
\ar@{}[rd]|{(I,J)}
& G_2 \xloopd[0.8pc]^-{j_2}
\\
H_1 \xloopr[1pc]_-{i_2} & H_2 
}
$$
Then, there is a canonical flip isomorphism of double vb-groupoids 
$$
\nuup\circ \nuup^{(2)}(I)  \longflipar \nuup \circ \nuup^{(2)}(J).
$$
\end{theorem}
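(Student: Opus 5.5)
The plan is to transport the proof of theorem~\ref{thm.main} verbatim into the setting of double vb-groupoids. The only inputs that need rechecking are that each construction used there produces multiplicative homogeneity actions and groupoid maps. First, regularity of $(I,J)$ in $\LieGrpd$ means (by the footnoted remark) regularity in $\Smooth$ of the total square. Proposition~\ref{prop-2immer} then gives that $\nuup^{(2)}(I)$ and $\nuup^{(2)}(J)$ are vb-immersions of total spaces. They are also vb-groupoid maps, since they are quotients of pullbacks of the groupoid maps $i_{2*},j_{2*}$ (tangent groupoids). Hence $\nuup\circ\nuup^{(2)}(I)$ and $\nuup\circ\nuup^{(2)}(J)$ are defined as double vb-groupoids: the tangent of a Lie groupoid is a Lie groupoid, and fiber products along good cospans of groupoid maps are Lie groupoids. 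The homogeneity actions are restrictions of products of multiplicative actions, so they are multiplicative. Quotients by wide vb-groupoid embeddings are vb-groupoids by the proposition above, with multiplicative quotient actions.

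Second, I would redo the whole chain of \S\ref{sec.symm-thm} in this enriched category. Concretely, I would check that each of the following is a groupoid map between the groupoids involved:
\begin{itemize}
\item the identification of lemma~\ref{lem-pb}, which is a restriction of the swap $TM_2\times TN_1\cong TN_1\times TM_2$;
\item the flips $\Phi$, $\Psi$ and $\Upsilon$, which are restrictions of products of identities, swaps and total flips of $TG^h\leftrightarrow TG^v$;
\item the sharpenings $\varphi^h_\dag=\dtimes{\varphi}{\pi^h}$.
\end{itemize}
All of these are built from groupoid maps by products, diagonals and restrictions to Lie subgroupoids (fiber products), so they are groupoid morphisms. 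The flip itself is the identity on the total space, hence trivially multiplicative.

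Third, the universal arguments need to hold in the groupoid setting. These are the cokernel arguments of propositions~\ref{prop.quot-tgt}, \ref{prop.vpb-hquot} and \ref{prop.quot-flip}, and lemmas~\ref{lem2x2}, \ref{lem.flip3x2}, \ref{lem.sq-flip} and \ref{lem.3x3}. I would introduce the categories of horizontal and vertical vb-objects in vb-groupoids, $\VB\Grpd^h(E\rightarrow M)$ and $\VB\Grpd^v(E\rightarrow M)$, by adding multiplicativity to definition~\ref{def.vb-object}. I would then verify that the quotient $D/_{\!h}Q$ with its projection is still a cokernel there. The factorization through a quotient is unique and set-theoretically forced, so the only point is that the induced map is a groupoid map and smooth. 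It is smooth because $\quot_h$ is a surjective submersion, and it is multiplicative because $\quot_h$ is a surjective groupoid morphism. With this, each identification is the same unique comparison isomorphism as before, now automatically a vb-groupoid isomorphism. Assembling the $3\times 3$ diagram and taking the quotient flip $\Theta_J/\Upsilon^{i_1}$ yields the flip isomorphism $\Lambda$. Its inverse gives the direction stated, $\nuup\circ\nuup^{(2)}(I)\flipar\nuup\circ\nuup^{(2)}(J)$.

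The main obstacle is the second and third steps' hidden hypothesis: the fiber products used must be Lie groupoids, i.e. the cospans of groupoid maps must be good in the groupoid sense. This requires the manifolds of arrows and objects to be transversal-type fiber products with submersive source and target. For the side-pullbacks along bundle projections this is fine, since $\underline{\pi}^h$ is a dvb-surmersion and hence a fibration-like groupoid map. For the identification of lemma~\ref{lem-pb} and the quotients $\nuup(I^*j_{2*})$, I would verify it by using regularity of $(I,J)$ at the level of arrows and units simultaneously. Following the footnote, the exact sequence of proposition~\ref{prop-2immer}(3) restricts to units, which gives the needed cleanness on $G_0$-level and hence a Lie groupoid structure via \cite{bursztyn-cabrera-delhoyo-2016}.
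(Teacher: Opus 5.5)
Your proposal takes the same route as the paper. The paper notes that the proof of Theorem~\ref{thm.main} uses only pullback and quotient operations together with purely diagrammatic (cokernel) arguments, and that these operations preserve multiplicativity, so the proof transfers verbatim to double vb-groupoids; you make explicit the checks the paper leaves implicit (cokernels in the groupoid-enriched vb-object categories, multiplicativity of $\Phi$, $\Psi$, $\Upsilon$, and inverting $\Lambda$ to get the stated direction). Two small corrections: Lemma~\ref{lem-pb} and the construction of $\nuup(I^*j_{2*})$ need no regularity, since they only involve pullbacks along vb-groupoid projections, which are always Lie groupoids, and regularity enters only to make $\nuup^{(2)}(I)$, $\nuup^{(2)}(J)$ vb-groupoid immersions; also, the induced map on $D/_{\!h}Q$ is multiplicative only if composable pairs in the quotient lift, which holds here because the source map of the vb-groupoid $Q$ is fiberwise surjective.
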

\begin{cor}
 The Lie groupoid $\nuup \circ \nuup^{(2)}(I)$ and $\nuup \circ \nuup^{(2)}(I)$ are canonically isomorphic.
\end{cor}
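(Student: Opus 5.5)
The plan is to obtain the corollary (read as: $\nuup \circ \nuup^{(2)}(I)$ and $\nuup \circ \nuup^{(2)}(J)$ are canonically isomorphic as Lie groupoids) by forgetting structure in theorem~\ref{thm.main_grpd}. That theorem gives a canonical flip isomorphism $\Lambda : \nuup\circ\nuup^{(2)}(I) \flipar \nuup\circ\nuup^{(2)}(J)$ of double vb-groupoids. A double vb-groupoid is a Lie groupoid $D$ together with two commuting regular multiplicative $(\R,\cdot)$-actions. A flip map in the sense of definition~\ref{def.flip-map} is, on the underlying manifolds, the composite of the restriction of a (partial or total) flip with a dvb-map. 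A flip does nothing to the total space: it is the identity of the underlying manifold (or a swap of product factors) and only exchanges which action is called horizontal and which vertical. So once both double vb-groupoid structures are dropped, the underlying map $\Lambda : \nuup\circ\nuup^{(2)}(I)\to \nuup\circ\nuup^{(2)}(J)$ is a diffeomorphism. It then remains to see that this diffeomorphism, and its inverse, are groupoid maps.

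To check that $\Lambda$ is a groupoid map, I would go back through its construction in the proof of theorem~\ref{thm.main} and argue that every ingredient is a morphism of Lie groupoids in the groupoid setting.
\begin{enumerate}
\item The maps $\Phi$ (\ref{Phlip}) and $\Psi$ (lemma~\ref{lem.Pslip}) are restrictions–corestrictions of maps of the form $\id\times(\mathrm{swap}\circ\flip)\times\id$ on direct products. On underlying Lie groupoids these are products of identities and factor swaps, hence groupoid isomorphisms. Their restrictions to the fiber-product subgroupoids are therefore again groupoid isomorphisms; this uses that the relevant cospans are good and the fiber products are Lie subgroupoids.
\item The non-flipping dvb-isomorphisms used along the way come from lemma~\ref{lem-pb}, propositions~\ref{prop.quot-tgt} and \ref{prop.vpb-hquot}, and lemmas~\ref{lem2x2} and \ref{lem.3x3}. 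Each is characterized as the unique map compatible with pullback or cokernel legs, and all those legs are groupoid maps. Hence each of these isomorphisms is multiplicative.
\item For the quotient flips $\Phi/\id$, $\Psi_{j_2}/\Phi_{j_1}$ and $\Theta_J/\Upsilon^{i_1}$, the quotient projections are surjective submersive groupoid maps. A map induced on quotients by a groupoid map that intertwines the projections is therefore itself a groupoid map.
\end{enumerate}

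The step I expect to be the main obstacle is exactly this last verification: that the universal-property arguments of section~\ref{sec.symm-thm} output maps that are multiplicative, and not merely smooth and equivariant. I would handle it by redoing each cokernel argument inside the additive category $\VB\Grpd$, which is already used for the groupoid version of the theorem. There, cokernels of wide vb-groupoid embeddings exist by the proposition on quotients of vb-groupoids, so the uniqueness statements produce groupoid maps automatically.

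Finally, the isomorphism is canonical because $\Lambda$ is canonical: it involves no choices beyond the universal properties, and forgetting structure is functorial. Applying the same reasoning to $\Lambda^{-1}$ shows the inverse is also a groupoid map, which gives the corollary.
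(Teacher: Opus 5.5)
Your proposal is correct and follows the paper's route: the corollary (read, as you do, with $J$ in place of the second $I$) comes from theorem~\ref{thm.main_grpd} by forgetting the two homogeneity actions, since a flip isomorphism of double vb-groupoids is in particular an isomorphism of the underlying Lie groupoids. Your steps 1--3 are redundant for the corollary, because multiplicativity is already part of the theorem's conclusion; they simply repeat the paper's justification that the diagrammatic arguments of section~\ref{sec.symm-thm} carry over to $\VB\Grpd$.
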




\appendix
\section{Double category}\label{app.dbcat}
A category $(C,C_0, r,s, i,\circ)$ will be denoted shortly as $C \rightrightarrows C_0$. Here $C$ refers to the class of morphisms and $C_0$ is the class of objects. The structure maps consists in the source and range maps $r,s: C \rightarrow C_0$, the identity map $i: C_0 \rightarrow C$, and the composition map $\circ : C\times_{r,s} C \rightarrow C$. We will often write $C\times_{C_0} C$ instead of $C\times_{r,s} C$.
The category of categories equipped with functors as morphisms will be denoted by $\Cat$.
Following~\cite{Ehresmann1963double}, 
a double category would be a 2-dimensional category comprising  two (possibly distinct) classes of 1-morphisms which share the same objects, as well as the same 2-cells , all of them organized in a compatible way. 
Precisely, 
\begin{deff}\label{def.dbcat}
    A (strict) \textbf{double category} $\Dscr = (D,H,V,B)$ consists in:
    \begin{itemize}
    \setlength\itemsep{-2pt}
    \item A class of objects $B$,
    \item A horizontal category $(H, B, r^H, s^H, i^H, \circ)$,
    \item A vertical category $(V, B, r^V, s^V, i^V, \bullet)$,
    \item A class of double morphisms $D$, together with two category structures 
    $(D, H, r^D_H, s^D_H, i^D_H, \tilde \bullet)$ and $(D, V, r^D_V, s^D_V, i^D_V, \tilde \circ)$,
    \end{itemize}
    subject to the following axioms:
    \begin{itemize}
    \setlength\itemsep{0pc}
    \item (Axiom (5) in~\cite{Ehresmann1963double}, ``elements of $D$ are squares'')
    $$
    \begin{array}{rclrcl}
        s^V s^D_V &=& s^H s^D_H,
        \quad & r^V r^D_V &=& r^H r^D_H, 
        \\[2mm]
        s^H r^D_H &=& r^H s^D_H,
        \quad & s^V r^D_V &=& r^H s^D_H.
    \end{array}
    $$
    \item (Functoriality) The following diagrams commute:
    $$
    \xymatrix@C=1pc@R=1.5pc{
    D\times_V D \ar[d]_-{r^D_H \times r^D_H} \ar[r]^-{\tilde\circ} & D \ar[d]^{r_H^D}
    \\
    H \times_B H   \ar[r]_-{\circ}& H
    } 
    \qquad 
    \xymatrix@C=1pc@R=1.5pc{
    D\times_V D \ar[d]_-{s^D_H \times s^D_H} \ar[r]^-{\tilde\circ} & D \ar[d]^{s_H^D}
    \\
    H \times_B H   \ar[r]_-{\circ}& H
    } 
    \qquad  
    \xymatrix@C=1pc@R=1.5pc{
    D\times_V D \ar[r]^-{\tilde\circ}  &   D
    \\
    H \times_B H \ar[r]_-{\circ}  \ar[u]^-{i^D_H \times i^D_H}  
    & \ar[u]_{i_H^D} H
    }
    $$
    and similarily when the role of $H$ and  $V$ are permuted. By abuse of notation, we sometimes suppress the tilde in the notation $\tilde \circ$ and $\tilde\bullet$ when the context is clear.
    \item (Interchange law\footnote{``Permutability axiom'' in~\cite{Ehresmann1963double}}) Let $d_\ell \in D$ for $\ell =1,2,3,4$.
    If $d_1 \circ d_2$, $d_3 \circ d_4$, $d_1 \bullet d_3$ and $d_2 \bullet d_4$ are defined then 
    $$
    (d_1 \circ d_2) \bullet (d_3 \circ d_4) = (d_1 \bullet d_3) \circ(d_2 \bullet d_4).
    $$
    \end{itemize}
    A double category is called \textbf{flat} if double morphisms are determined by their edge morphisms.
  \end{deff}
  \begin{remark}
  The modified axiom (3') from~\cite{Ehresmann1963double}, regarding the functoriality of the source and range map,  turns to be  a consequence of axiom (5). Namely, $(r^D_V, r^H)$ and $(s^D_V, s^H)$ defines functors from $D \rightrightarrows H$ to $V\rightrightarrows B$, and $(r^D_H, r^V)$ and $(s^D_H, s^V)$ defines functors from $D \rightrightarrows V$ to $H \rightrightarrows B$.
  \end{remark}
\subsubsection{Choosing a direction.}\label{par.choosing-direction}
According to the last theorem in~\cite{Ehresmann1963double}, given a double category $\Dscr = (D,H,V,B)$, there are two distinct interpretations of $D$ as ``morphisms of morphisms''. In order to discard such ambiguity, we introduce the following terminology:  
\begin{itemize}
 \setlength\itemsep{0pc} 
  \item[(i)] the \textbf{verticalization} of $\Dscr$ is the internal category in $\Cat$ with category of morphisms  
  $\xymatrix@C=1.2pc{
        D \times_{H} D 
        \ar[r]^-{\tilde\bullet}
        & D \dar[r]
        & H, 
    }$ 
    and category of objects 
    $\xymatrix@C=1.2pc{
        V \times_{B} V 
        \ar[r]^-{\bullet}
        & V \dar[r]
        & B, 
    }$
  \item[(ii)] 
   the \textbf{horizontalization} of $\Dscr$ is the internal category in $\Cat$ with category of morphisms  
  $\xymatrix@C=1.2pc{
        D \times_{V} D 
        \ar[r]^-{\tilde\circ}
        & D \dar[r]
        & V ,}$ 
    and category of objects 
    $\xymatrix@C=1.2pc{
        H \times_{B} H
        \ar[r]^-{\circ}
        & H \dar[r]
        & B.
    }$
\end{itemize}
We will use the terminology vertical/horizontal \textbf{2-morphism} in a double category $\Dscr$ to refer to a 1-morphism in the category of morphisms of one of the internal category described above, namely a 2-morphism is \textit{directed} (in opposition to a double morphism).
Nowadays, it is common to find the asymmetric definition of a  double category as internal categories in $\Cat$. This latter approach was popularized in~\cite{grandis-pare1999} to define a weaker notion of double category, in which one of the directions is strictly associative (the horizontal one in loc. cit.) while the other one is only weakly associative. 
In the present work, we only encounter strict (flat) double categories in which both directions are equally relevant. 
But that's not all: the very essence of the proof of our main result (theorem~\ref{thm.main}) rests on the existence of a flip operation carrying the horizontalization onto the verticalization, and vice versa. 
In order to highlight the read direction of a square, we decorate it by a ``$\Rightarrow$''. For instance, if $v_1,v_2 \in V$ and $h_1, h_2$ in $H$, the diagrams
$$
 \xymatrix{
     \ar[r] \ar[d]_-{v_1} 
     \rRightarrow 
    & \ar[d]^-{v_2} 
    \\
     \ar[r] & 
  }
 \qquad \quad 
 \xymatrix{
     \ar[r]^-{h_1} \ar[d]_-{v_1}
     \ar@{}[rd]|{(h,v)}
    & \ar[d]^-{v_2} 
    \\
    \ar[r]_-{h_2} & 
}
\qquad \quad 
  \xymatrix{
     \ar[r]^-{h_1} \ar[d] 
     \dRightarrow
    & \ar[d] 
    \\
    \ar[r]_-{h_2} & 
}
$$
represents respectively a 2-morphism $v_1 \Rightarrow v_2$ in the horizontalization (LHS), and a 2-morphism $h_1\Rightarrow h_2$ in the verticalization (RHS). 
The square representing a double morphism $d$ whose horizontalization is $h= (h_2,h_1)$ and verticalization in $v=(v_2,v_1)$ will be annotated with the pair $(h,v)$ (middle square). 
\subsubsection{Flip of a double category.}\label{par.dbcat-flip}
Let $\Dscr= (D, H,V, B)$ be a double category, then there is an associated double category\footnote{This has been called ``transpose'' of $\Dscr$ in the literature, but we prefer to use the word ``flip'' in order to stick to the double vector bundle terminology.} $\flip(\Dscr) = (D, V,H, B)$  in which the horizontal and vertical directions are formally swapped: 
$$  
\left.\vcenter{
\xymatrix@C=3.5pc{
    b_1 \ar[r] \ar[d] \ar@{}[rd]|-{d= (h,v)} 
    & b_2\ar[d] 
    \\
    b_3 \ar[r] & b_4
} 
}\right.
\quad 
\overset{\flip}{\longmapsto} 
\quad
\left.\vcenter{
\xymatrix@C=4.5pc{
    b_1 \ar[r] \ar[d] \ar@{}[rd]|-{\flip(d)= (v,h)} 
    & b_3\ar[d] 
    \\
    b_2 \ar[r] & b_4
} 
}\right.
$$
In particular, the horizontalization/verticalization of $\flip(\Dscr)$ coincides the verticalization/horizontalization of the $\Dscr$, respectively.
$$
\xymatrix{
\text{Double category} \ar[r] \ar[d] &  \text{Horizontalization} 
\\
\text{Verticalization} \hspace{1cm}\ar@{<->}[ru]_{\flip}&
}
$$
%
%
\section{Vertical functoriality of the normal differential}\label{app.vert-fun}
In this appendix, we study the functoriality of the normal differential with respect to the vertical composition (proposition~\ref{prop.vert-fun}).
Recall that, the normal differential $\nuup^{(2)}(F)$ of a horizontal 2-map of immersions $F= (f_2,f_1): j_1 \Rightarrow j_2$ was defined as the quotient vb-map $J^*f_{2*}/f_{1*}$ where $J$ is the verticalization of $F$ (definition~\ref{def-normaldiff}). 
In what follows, we exploit the fact that such quotient is a cokernel in some suitable category, namely $\nuup^{(2)}(F)=\coker(J_\sharp)$.
\par \hspace{1pc}
Let $K$ and $H$ be a pair of vertically composable 2-maps of immersions with associated 2-immersions $J$ and $I$ respectively (their verticalizations), see below for some more specific notations. Then, by applying the tangent functor $T$ we obtain the composite $J_* \circ I_*$ of 2-vb-immersions: 
$$
\vcenter{
 \xymatrix@R=2.4pc@C=3.5pc{
    M_2 \xloopd[1pc]_-{i_2} \ar@{}[rd]|{(H,I)}& \ar[l]_-{h_1} \xloopd[1pc]^-{i_1} M_1 
    \\
    N_2 \xloopd[1pc]_-{j_2} \ar@{}[rd]|{(K,J)} & \ar[l]^-{h_2 = k_1} N_1 \xloopd[1pc]^-{j_1}
    \\
    P_2 & \ar[l]^-{k_2} P_1
 }}
 \qquad \overset{T}{\longmapsto} \qquad 
 \vcenter{
 \xymatrix@R=2.4pc@C=3.5pc{
    TM_2 \xloopd[1pc]_-{i_{2*}} \ar@{}[rd]|{(H_*,I_*)}& \ar[l]_-{h_{1*}} \xloopd[1pc]^-{i_{1*}} TM_1 
    \\
    TN_2 \xloopd[1pc]_-{j_{2*}} \ar@{}[rd]|{(K_*,J_*)} & \ar[l]^-{h_{2*} = k_{1*}} TN_1 \xloopd[1pc]^-{j_{1*}}
    \\
    TP_2 & \ar[l]^-{k_{2*}} TP_1
 }}
 $$
 Then, by performing a sharpening square by square, from the bottom to  top, we get:
 $$
 \vcenter{
 \xymatrix@R=2.4pc{
    TM_2 \xloopd[1pc]_-{i_{2*}} \ar@{}[rd]|{(H_*,I_*)}& \ar[l]_-{h_{1*}} \xloopd[1pc]^-{i_{1*}} TM_1 
    \\
    TN_2 \xloopd[1pc]_-{j_{2\sharp}} \ar@{}[rd]|{(K_\flat,J_\sharp)} & \ar[l]^-{k_{1*}} TN_1 \xloopd[1pc]^-{j_{1\sharp}}
    \\
    j_2^*TP_2 & \ar[l]^-{J^*k_{2*}} j_1^*TP_1
 }}
 \qquad \quad 
 \vcenter{
 \xymatrix@R=2.4pc@C=3.5pc{
    TM_2 \xloopd[1pc]_-{i_{2\sharp}} \ar@{}[rd]|{(H_\flat,I_\sharp)}& \ar[l]_-{h_{1*}} \xloopd[1pc]^-{i_{1\sharp}} TM_1 
    \\
    i_2^*TN_2 \xloopd[1pc]_-{i_2^*j_{2\sharp}} \ar@{}[rd]|{(I^*K_\flat,\,\mathbb{I}^*J_\sharp)} & \ar[l]_-{I^*h_{2*}} i_1^*TN_1 \xloopd[1pc]^-{i_1^*j_{1\sharp}}
    \\
    i_2^*j_2^*TP_2 & \ar[l]^-{I^*J^*k_{2*}} i_1^*j_1^*TP_1
 }}
 $$
 where $\mathbb{I}$ is a ``3-map'' from $\id_{h_1}: h_1 \Rightarrow h_1$ to $\id_{h_2}: h_2 \Rightarrow h_2$, given by the following commutative cube  
 $$
    \xymatrix@R=0.7pc@C=1.2pc{
        &&  M_1 \ar'[d]^-{h_1}[dd] \ar[rrrr]|{\,i_1\,} & & &
        & N_1  \ar[dd]^{h_2} & & &
        \\
        &  M_1 \ar[ur]^-{\id} \ar[dd]_-{h_1} \ar[rrrr]|(.6){\,i_1\,}  & &  & &
        N_1 \ar[ru]_-{\id} \ar[dd]_(.3){h_2} & & & &
        \\
        & & M_2\ar'[rrr]|-{\,i_2\,}[rrrr] 
        &  & 
        & & N_2 &  &
        \\
        & M_2 \ar[ur]^-{\id}\ar[rrrr]|-{\,i_2\,}& &
        &  &  N_2\ar[ru]_-{\id} &  &
        }
$$
and, $\mathbb{I}^*J_\sharp$ refers to the pullback of the square of vb-maps presenting $J_\sharp$, which maps to $\id_{h_2}$ under the base-projection double functor $\pi:\VB^\square \rightarrow \Smooth^\square$, along the 3-map $\mathbb{I}$ (obtained from a direct/naive generalization of pulling back vb-maps along 2-maps). 
 
 In sum, 
 $$
 \coker((J\circ I)_\sharp) \cong \coker(\mathbb{I}^*J_\sharp \circ I_\sharp)
 $$
 and then, since $\mathbb{I}^*J_\sharp$ is mono, the kernel-cokernel sequence reduces to a short exact sequence
 $$
 \xymatrix@C=1.5pc{
 0 \ar[r] & \coker(I_\sharp) \ar[r] & \coker(\mathbb{I}^*J_\sharp \circ I_\sharp) \ar[r] & \coker(\mathbb{I}^*J_\sharp) \ar[r]& 0.
 }
 $$
 Finally, the conclusion follows from the isomorphism
 $\coker(\mathbb{I}^*J_\sharp) \cong I^*\coker(J_\sharp)$
 and, the fact that the short exact sequence above splits.
\printbibliography
\vspace{5mm}
\noindent
Quentin Karegar Baneh Kohal \\
Instituto de Matem\'aticas \\
Universidad Nacional Aut\'onoma de M\'exico \\
Universidad s/n, Colonia Lomas de Chamilpa  \\
CP62210 Cuernavaca, Morelos Mexico \\
quentin.karegar@im.unam.mx
\end{document}